\documentclass[12pt, reqno, a4paper]{amsart}
\usepackage{amsmath, amssymb, amsthm, amscd}
\usepackage{extarrows}
\usepackage[T2A, T1]{fontenc}
\usepackage{txfonts}	
\usepackage{eucal}
\usepackage[dvips]{color}
\usepackage{multicol}
\usepackage[all]{xy}		
\usepackage{graphicx}
\usepackage{color}
\usepackage{colordvi}
\usepackage{xspace}
\usepackage{tikz}
\usepackage{enumitem}
\usepackage[colorlinks,final,backref=page,hyperindex]{hyperref}
\allowdisplaybreaks % page break for formulars

\newif\ifflabel\flabelfalse
\ifflabel
\else
	
\fi
\newtheorem{theorem}{Theorem}[section]
\newtheorem{lemma}[theorem]{Lemma}
\newtheorem{corollary}[theorem]{Corollary}
\newtheorem{proposition}[theorem]{Proposition}
\theoremstyle{definition}
\newtheorem{definition}[theorem]{Definition}
\newtheorem{example}[theorem]{Example}
\newtheorem{remark}[theorem]{Remark}

\newcommand{\End}{\mathrm{End}}
\newcommand{\Hom}{\mathrm{Hom}}
\newcommand{\id}{\mathrm{id}}

\newcommand{\frakl}{\mathfrak l}
\newcommand{\frakr}{\mathfrak r}

\newcommand{\ad}{\mathrm{ad}}

\newcommand{\delete}[1]{}

\begin{document}
\title[Extended $\mathcal{O}$-operators, extended perm Yang-Baxter equations and related structures]{Extended $\mathcal{O}$-operators, extended perm Yang-Baxter equations and related structures}

%%%%%%%%%%%%%%%%%%%%%%%%%%%%%%%%%%%%%%%%%%%%%%%%%%%%%%%%%%%%%%%%%%%%%%%%%%%%%%%%
%%%%%%%%%%%%%%%%%%%%%%%%%%%%%%%%%%%%%%%%%%%%%%%%%%%%%%%%%%%%%%%%%%%%%%%%%%%%%%%%
%%%%%%%%%%%%%%%%%%%%%%%%%%%%%%%%%%%%%%%%%%%%%%%%%%%%%%%%%%%%%%%%%%%%%%%%%%%%%%%%
\author[S.~Zhang]{Siqi Zhang}
\address{School of Mathematics, North University of China, Taiyuan 030051, China}
\email{2408014201@st.nuc.edu.cn}

\author[Y.~Lin]{Yuanchang Lin$^\ast$}
\thanks{$^\ast$Corresponding author}
\address{School of Mathematics, North University of China, Taiyuan 030051, China}
\email{linyuanchang@mail.nankai.edu.cn}

%%%%%%%%%%%%%%%%%%%%%%%%%%%%%%%%%%%%%%%%%%%%%%%%%%%%%%%%%%%%%%%%%%%%%%%%%%%%%%%%

\subjclass[2020]{
	16T10, % Bialgebras
	16T25, % Yang-Baxter equations
	16W99, % Associative rings and algebras: None of the above, but in this section
	17B38, % Yang-Baxter equations and Rota-Baxter operators
}

\keywords{Extended perm Yang-Baxter equation; extended $\mathcal{O}$-operators; post-perm algebras; perm bialgebras}

\date{\today}

\begin{abstract}
	In this paper, we introduce the notions of extended $\mathcal{O}$-operators on perm algebras, as generalization of $\mathcal{O}$-operators, and the extended perm Yang-Baxter equation.
	We demonstrate that an $\mathcal{O}$-operator of weight $\lambda$ on a perm algebra gives rise to a post-perm algebra, and an extended $\mathcal{O}$-operator yields a new perm algebra structure.
	We also give an equivalent characterization of extended $\mathcal{O}$-operators in terms of ordinary $\mathcal{O}$-operators via the $\pi_\pm$ decomposition.
	The notion of the generalized perm Yang-Baxter equations is introduced, and their relationship with extended $\mathcal{O}$-operators is established.
	The tensor form of extended $\mathcal{O}$-operators leads to the notion of the extended perm Yang-Baxter equation (extended perm-YBE), which generalizes the perm Yang-Baxter equation.
	We establish that a solution of the extended perm-YBE with $(R, \ad)$-invariant skew-symmetric part is characterized by an extended $\mathcal{O}$-operator.
	Furthermore, relationships among extended $\mathcal{O}$-operators, the perm Yang-Baxter equation and the extended perm-YBE are investigated through the framework of quadratic perm algebras and semi-direct product perm algebras.
\end{abstract}

\maketitle

\tableofcontents

%%%%%%%%%%%%%%%%%%%%%%%%%%%%%%%%%%%%%%%%%%%%%%%%%%%%%%%%%%%%%%%%%%%%%%%%%%%%%%%%
%%%%%%%%%%%%%%%%%%%%%%%%%%%%%%%%%%%%%%%%%%%%%%%%%%%%%%%%%%%%%%%%%%%%%%%%%%%%%%%%
%%%%%%%%%%%%%%%%%%%%%%%%%%%%%%%%%%%%%%%%%%%%%%%%%%%%%%%%%%%%%%%%%%%%%%%%%%%%%%%%
%%%%%%%%%%%%%%%%%%%%%%%%%%%%%%%%%%%%%%%%%%%%%%%%%%%%%%%%%%%%%%%%%%%%%%%%%%%%%%%%
%%%%%%%%%%%%%%%%%%%%%%%%%%%%%%%%%%%%%%%%%%%%%%%%%%%%%%%%%%%%%%%%%%%%%%%%%%%%%%%%
%%%%%%%%%%%%%%%%%%%%%%%%%%%%%%%%%%%%%%%%%%%%%%%%%%%%%%%%%%%%%%%%%%%%%%%%%%%%%%%%
%%%%%%%%%%%%%%%%%%%%%%%%%%%%%%%%%%%%%%%%%%%%%%%%%%%%%%%%%%%%%%%%%%%%%%%%%%%%%%%%
\section{Introduction}\label{sec:intro}
The purpose of this paper is to establish extended structures for perm algebras, namely extended $\mathcal{O}$-operators, the extended perm Yang-Baxter equation, and post-perm algebras, following the frameworks developed for Lie algebras, associative and Novikov algebras~\cite{bai2010nonabelian, bai2012O, yu2026extended}.

A perm algebra is an associative algebra satisfying the left-commutative identity \cite{chapoton2002endofoncteur}, and can be obtained from a commutative associative algebra through an averaging operator~\cite{aguiar2000pre}. 
The operads of perm algebras and pre-Lie algebras are Koszul dual to each other, and there exists a Lie algebra structure on the tensor product of a perm algebra and a pre-Lie algebra \cite{chapoton2001pre, ginzburg1994koszul, loday2012algebraic}. 
Recently, \cite{lin2025infinite} lifted this construction to the context of bialgebras and showed that the tensor product of a perm bialgebra and a quadratic pre-Lie algebra can be endowed with a Lie bialgebra structure. 
The theory of perm bialgebras was initiated in \cite{hou2024extending, lin2025infinite}, composed of a perm algebra and a perm coalgebra satisfying certain compatibility conditions.
It is known that the classical Yang-Baxter equation (CYBE) is the fundamental Yang-Baxter-type equation governing Lie bialgebras~\cite{chari1995guide} and the associative Yang-Baxter equation (AYBE) is the fundamental Yang-Baxter-type equation governing associative bialgebras~\cite{bai2010double}. Furthermore, the notion of the perm Yang-Baxter equation (perm-YBE) in perm algebras was introduced as an analog of the CYBE in Lie algebras and the AYBE in associative algebras \cite{bai2010double, chari1995guide}; its symmetric solutions give rise to perm bialgebras, as shown in \cite{lin2025infinite}.

The notion of $\mathcal{O}$-operators, also known as (relative) Rota-Baxter operators, provides a systematic method for constructing solutions to Yang-Baxter-type equations \cite{bai2007unified, bai2010nonabelian, bai2011generalizations, bai2012O}. 
For Lie algebras, extended $\mathcal{O}$-operators were introduced in \cite{bai2010nonabelian} to study double Lie algebra structures and nonabelian generalized Lax pairs. 
These extended operators were shown to generalize Rota-Baxter operators and ordinary $\mathcal{O}$-operators, and their relationship with extended classical Yang-Baxter equations was also established~\cite{bai2010nonabelian, bai2011generalizations, kupershmidt1999classical, semenov1983classical}. 
Analogous results for associative algebras and Novikov algebras were obtained in~\cite{bai2012O, yu2026extended}. 
These developments motivate our investigation of analogous extended structures in the context of perm algebras.

In this paper, we demonstrate the feasibility of extending the aforementioned extended structures to perm algebras. More precisely, we first introduce the notion of $A$-bimodule perm algebras, which encode the actions of $A$ on $V$ via left and right multiplications, together with the perm algebra structure on $V$ itself. 
From such data, we define $\mathcal{O}$-operators of weight $\lambda$ on perm algebras and prove that such operators give rise to post-perm algebra structures. 
We also provide a necessary and sufficient condition for a perm algebra to admit a post-perm algebra structure whose associated perm algebra is the algebra itself.

We then introduce the notion of extended $\mathcal{O}$-operators on perm algebras as generalizations of ordinary $\mathcal{O}$-operators. 
An extended $\mathcal{O}$-operator consists of a linear map $T: V \rightarrow A$ together with an extension $S: V \to A$ satisfying certain conditions, such that the behavior of $T$ is controlled by $S$ and the mass parameter $\kappa$ and $\gamma$.
When $S$ is a balanced $A$-bimodule homomorphism, we establish the $\pi_{\pm}$ decomposition: we show that $T$ is an extended $\mathcal{O}$-operator of weight $\lambda$ with extension $S$ of mass $(-1, \pm\lambda)$ if and only if $T \pm S$ is an $\mathcal{O}$-operator of weight $1$ on $(A, \cdot)$ associated to a new $A$-bimodule perm algebra obtained by $S$. 
This sheds new light on the theory of $\mathcal{O}$-operators through the study of  extended $\mathcal{O}$-operators.

We also introduce the notion of the generalized perm Yang-Baxter equations (generalized perm-YBE), which arise naturally in the study of extended $\mathcal{O}$-operators and perm bialgebras. 
We establish the relationship between extended $\mathcal{O}$-operators and the generalized perm-YBE in the semi-direct product perm algebras.

We further investigate the tensor form of extended $\mathcal{O}$-operators, which leads naturally to the definition of the extended perm Yang-Baxter equations (extended perm-YBE) of mass $\epsilon$. 
This equation generalizes the perm-YBE by introducing a mass term involving the skew-symmetric part of $r$. 
When the skew-symmetric part of $r$ is $(R, \ad)$-invariant, we establish the correspondence between extended $\mathcal{O}$-operators and solutions of the extended perm-YBE. 
In the special case $\kappa = -1$, this equivalence characterizes solutions of the perm-YBE in terms of $\mathcal{O}$-operators.

Moreover, we study the extended perm-YBE in the setting of quadratic perm algebras and semi-direct product perm algebras. 
In a quadratic perm algebra $(A, \cdot, \mathcal{B})$, the nondegenerate skew-symmetric bilinear form $\mathcal{B}$ identifies $A$ with its dual; we show that an extended $\mathcal{O}$-operator on $A$ associated to the adjoint bimodule corresponds to a solution of the extended perm-YBE via $\mathcal{B}$. 
The semi-direct product construction yields a correspondence between extended $\mathcal{O}$-operators on $A$ and solutions of the extended perm-YBE in the semi-direct product perm algebra $(A \ltimes V^*, \bullet)$, generalizing the known relationship between $\mathcal{O}$-operators and the perm-YBE.

The paper is organized as follows.
Section~\ref{sec:prelim} introduce the notions of $A$-bimodule perm algebras, $\mathcal{O}$-operators of weight $\lambda$ and post-perm algebras, and establish the relationship between post-perm algebras and $\mathcal{O}$-operators.
In Section~\ref{sec:extended-o}, we introduce the notion of extended $\mathcal{O}$-operators on perm algebras, and show that extended $\mathcal{O}$-operators induce new perm algebra structures.
Moreover, we give an equivalent characterization of extended $\mathcal{O}$-operators via the $\pi_\pm$ decomposition.
Section~\ref{sec:generalized} introduce the notion of the generalized perm-YBE and establish the relationship between extended $\mathcal{O}$-operators and the generalized perm-YBE.
In Section~\ref{sec:tensor}, we introduce the notion of the extended perm-YBE and investigate the relationship between the extended perm-YBE and extended $\mathcal{O}$-operators.

Throughout this paper, we work over a base field $\mathbf{k}$ of characteristic $0$, and all vector spaces and algebras are assumed to be finite-dimensional.
We adopt the following conventions and notations.
\begin{enumerate}
	\item
	      Let $(A, \diamond)$ be a vector space equipped with a bilinear operation $\diamond: A \otimes A \rightarrow A$.
	      Let $L_{\diamond}(a)$ and $R_{\diamond}(a)$ denote the left and right multiplication operators, that is
	      \begin{equation*}
		      L_{\diamond}(a) b = R_{\diamond}(b) a = a \diamond b, \;\; \forall a, b \in A.
	      \end{equation*}
	      In particular, $\ad_{\diamond}$ is defined by $\ad_{\diamond} = (L_{\diamond} - R_{\diamond})(a)$.
	      We also simply denote them by $L(a)$, $R(a)$ and $\ad(a)$, respectively, without confusion.

	\item
	      Let $V$ be a vector space.
	      Denote the flip operator by $\tau: V \otimes V \rightarrow V \otimes V$, which is defined by
	      \begin{equation*}
		      \tau(u \otimes v) = v \otimes u, \;\; \forall u, v \in V.
	      \end{equation*}

	\item
	      Let $(A, \diamond)$ be a vector space equipped with a bilinear operation $\diamond: A \otimes A \rightarrow A$.
	      Let $r = \sum_{i} a_i \otimes b_i \in A \otimes A$.
	      Set
	      \begin{align*}
		      r_{12} = \sum_{i} a_i \otimes b_i \otimes 1, \;\;
		      r_{13} = \sum_{i} a_i \otimes 1 \otimes b_i, \;\;
		      r_{23} = \sum_{i} 1 \otimes a_i \otimes b_i, \\
		      r_{21} = \sum_{i} b_i \otimes a_i \otimes 1, \;\;
		      r_{31} = \sum_{i} b_i \otimes 1 \otimes a_i, \;\;
		      r_{32} = \sum_{i} 1 \otimes b_i \otimes a_i,
	      \end{align*}
	      where $1$ is the unit if $(A, \diamond)$ is unital or a symbol playing a similar role as the unit for the non-unital cases.
	      Furthermore, define compound symbols such as $r_{12} \diamond r_{13}$ by
	      \begin{equation*}
		      r_{12} \diamond r_{13} = \sum_{i, j} a_i \diamond a_j \otimes b_i \otimes b_j.
	      \end{equation*}

	\item
	      Denote the standard pairing between the dual space $V^*$ and $V$ by
	      \begin{equation*}
		      \langle \ ,\ \rangle: V^* \times V \rightarrow \mathbb{F}, \;\; \langle f, v \rangle := f(v), \;\; \forall f \in V^*, \; v \in V.
	      \end{equation*}

	\item
	      Let $V, W$ be two vector spaces and $T: V \rightarrow W$ be a linear map.
	      Denote the dual map by $T^*: W^* \rightarrow V^*$, which is defined by
	      \begin{equation*}
		      \langle T^*(\xi^*), v \rangle = \langle \xi^*, T(v) \rangle, \;\; \forall v \in V, \; \xi^* \in W^*.
	      \end{equation*}

	\item
	      Let $A, V$ be vector spaces.
	      For a linear map $\zeta: A \rightarrow \End_{\mathbf{k}}(V)$, define a linear map $\zeta^*: A \rightarrow \End(V^*)$ by $\zeta^*(a) = (\zeta(a))^*$, or more explicitly,
	      \begin{equation*}
		      \langle \zeta^*(a)v^*, u\rangle = \langle v^*, \zeta(a) u\rangle, \;\; \forall a \in A, \; u \in V, \; v^* \in V^*.
	      \end{equation*}
\end{enumerate}

%%%%%%%%%%%%%%%%%%%%%%%%%%%%%%%%%%%%%%%%%%%%%%%%%%%%%%%%%%%%%%%%%%%%%%%%%%%%%%%%
%%%%%%%%%%%%%%%%%%%%%%%%%%%%%%%%%%%%%%%%%%%%%%%%%%%%%%%%%%%%%%%%%%%%%%%%%%%%%%%%
%%%%%%%%%%%%%%%%%%%%%%%%%%%%%%%%%%%%%%%%%%%%%%%%%%%%%%%%%%%%%%%%%%%%%%%%%%%%%%%%
\section{\texorpdfstring{$A$}{A}-bimodule perm algebras, post-perm algebras and extended \texorpdfstring{$\mathcal{O}$}{O}-operators}\label{sec:prelim}
In this section, we introduce the notions of $A$-bimodule perm algebras, $\mathcal{O}$-operators of weight $\lambda$ and post-perm algebras, which provide the necessary foundations for the subsequent sections.
Furthermore, we present the relationship between post-perm algebras and $\mathcal{O}$-operators.

%%%%%%%%%%%%%%%%%%%%%%%%%%%%%%%%%%%%%%%%%%%%%%%%%%%%%%%%%%%%%%%%%%%%%%%%%%%%%%%%
\subsection{\texorpdfstring{$A$}{A}-bimodule perm algebras}

\begin{definition}
	A \textbf{perm algebra} $(A, \cdot)$ is a vector space $A$ with a multiplication $\cdot: A \otimes A \to A$ such that
	\begin{equation*}
		a \cdot (b \cdot c)= (a \cdot b) \cdot c = (b \cdot a) \cdot c, \;\; \forall a, b, c \in A. 
	\end{equation*}
	A perm algebra $(A, \cdot)$ is called \textbf{trivial} if $a \cdot b = 0$ for all $a, b \in A$.
\end{definition}

\begin{definition}\label{def:module}
	A \textbf{bimodule} of a perm algebra $(A, \cdot)$ is a triple $(V, \frakl, \frakr)$, where $V$ is a vector space and $\mathfrak{l}, \mathfrak{r}: A \rightarrow \End_{\mathbf{k}}(V)$ are two linear maps satisfying the following equalities for all $a, b \in A$
	\begin{equation}
		\frakl(a \cdot b) = \frakl(a)\frakl(b) = \frakl(b)\frakl(a), \;\;
		\frakr(a \cdot b) = \frakr(b)\frakr(a) = \frakr(b)\frakl(a) = \frakl(a)\frakr(b). \label{eq:repp}
	\end{equation}
	An {\bf $A$-bimodule homomorphism} between bimodules $(V_{1}, \frakl_{1}, \frakr_{1})$ and $(V_{2}, \frakl_{2}, \frakr_{2})$ of $(A, \cdot)$ is a linear map $\varphi: V_{1} \rightarrow V_{2}$ such that
	\begin{equation*}
		\varphi(\frakl_{1}(a)v) = \frakl_{2}(a)(\varphi(v)), \;\;
		\varphi(\frakr_{1}(a)v) = \frakr_{2}(a)(\varphi(v)), \;\;
		\forall a \in A, \; v \in V_{1}.
	\end{equation*}
\end{definition}

Note that if $(V, \frakl, \frakr)$ is a bimodule of $(A, \cdot)$, then $(V^*, \frakl^*, \frakl^* - \frakr^*)$ is also a bimodule of $(A, \cdot)$~\cite{lin2025infinite}.

\begin{example}
	Let $(A, \cdot)$ be a perm algebra.
	Then both $(A, L_{\cdot}, R_{\cdot})$ and $(A^*, L_{\cdot}^*, L_{\cdot}^* - R_{\cdot}^*)$ are bimodules of $(A, \cdot)$.
\end{example}

\begin{definition}
	Let $(A, \cdot)$ and $(V, \cdot_V)$ be perm algebras, and $\frakl, \frakr: A \rightarrow \End_{\mathbf{k}}(V)$ be two linear maps.
	The quadruple $(V, \cdot_V, \frakl, \frakr)$ is called a {\bf $(A, \cdot)$-bimodule perm algebra} (and {\bf $A$-bimodule perm algebra} for short) if $(V, \frakl, \frakr)$ is a bimodule of $(A, \cdot)$ and the following equalities hold for all $a \in A$ and $u, v \in V$:
	\begin{align}
		\frakl(a)(u \cdot_V v) = (\frakl(a)u) \cdot_V v = (\frakr(a)u) \cdot_V v = u \cdot_V (\frakl(a)v), \label{eq:pmc} \\
		\frakr(a)(v \cdot_V u) = \frakr(a)(u \cdot_V v) = u \cdot_V (\frakr(a)v). \label{eq:pmd}
	\end{align}
\end{definition}

\begin{proposition}
	Let $(A, \cdot)$ be a perm algebra, $V$ be a vector space with a binary operation $\cdot_V$, and $\frakl, \frakr: A \rightarrow \End_{\mathbf{k}}(V)$ be two linear maps.
	Define a binary operation $\bullet$ on $A \oplus V$ by
	\begin{align*}
		(a + u) \bullet (b + v) & := a \cdot b + \frakl(a) v + \frakr(b) u + u \cdot_V v, \;\; \forall a, b \in A, \; u, v \in V.
	\end{align*}
	Then $(V, \cdot_V, \frakl, \frakr)$ is an $A$-bimodule perm algebra if and only if $(A \oplus V, \bullet)$ is a perm algebra, which is called the \textbf{semi-direct product of $(A, \cdot)$ and $(V, \cdot_V, \frakl, \frakr)$} and denoted by $(A \ltimes_{\frakl, \frakr} V, \bullet)$.
\end{proposition}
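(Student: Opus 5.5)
The plan is a direct expansion. I will write out the two perm identities for $\bullet$ on $A\oplus V$, namely
\begin{equation*}
x\bullet(y\bullet z)=(x\bullet y)\bullet z \quad\text{and}\quad (x\bullet y)\bullet z=(y\bullet x)\bullet z ,
\end{equation*}
and compare them component by component. Since $\bullet$ is bilinear, it suffices to check these identities on homogeneous triples. Each of $x,y,z$ is taken either in $A$ or in $V$, giving eight types of triples. For each type I will compute both sides and project onto $A$ and onto $V$.

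First, the triple $(a,b,c)\in A^3$ just returns the perm axioms of $(A,\cdot)$, which hold by hypothesis. The triple $(u,v,w)\in V^3$ returns exactly the perm axioms of $(V,\cdot_V)$. The triples with one entry in $V$ and two in $A$ are as follows:
\begin{itemize}
\item[] $(a,b,w)$ gives $\frakl(a)\frakl(b)w=\frakl(a\cdot b)w=\frakl(b)\frakl(a)w$;
\item[] $(a,v,c)$ gives $\frakl(a)\frakr(c)v=\frakr(c)\frakl(a)v=\frakr(a\cdot c)v$;
\item[] $(u,b,c)$ gives $\frakr(b\cdot c)u=\frakr(c)\frakr(b)u=\frakr(c)\frakl(b)u$.
\end{itemize}
Together these are precisely the bimodule identities \eqref{eq:repp}.

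The triples with two entries in $V$ and one in $A$ should produce \eqref{eq:pmc} and \eqref{eq:pmd}:
\begin{itemize}
\item[] $(a,v,w)$ gives $\frakl(a)(v\cdot_V w)=(\frakl(a)v)\cdot_V w$ from associativity and $(\frakl(a)v)\cdot_V w=(\frakr(a)v)\cdot_V w$ from left-commutativity;
\item[] $(u,v,c)$ gives $u\cdot_V(\frakr(c)v)=\frakr(c)(u\cdot_V v)=\frakr(c)(v\cdot_V u)$;
\item[] $(u,b,w)$ gives $u\cdot_V(\frakl(b)w)=(\frakr(b)u)\cdot_V w=(\frakl(b)u)\cdot_V w$.
\end{itemize}
In every mixed case the $A$-component of both sides vanishes identically, so only the $V$-components carry conditions.

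The only real care needed is bookkeeping. I must check that the union of the resulting equalities is exactly the list in \eqref{eq:repp}, \eqref{eq:pmc} and \eqref{eq:pmd}, with nothing extra and nothing missing. For example, the equality $\frakl(a)(u\cdot_V v)=u\cdot_V(\frakl(a)v)$ is obtained by chaining the $(a,v,w)$ and $(u,b,w)$ relations, and the chain of equalities in \eqref{eq:pmc} collects exactly these. Once the correspondence is tabulated, both directions of the equivalence follow immediately: the perm identities for $\bullet$ hold for all elements if and only if they hold on each homogeneous type, by multilinearity.
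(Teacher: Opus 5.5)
Your proposal is correct and is exactly the direct componentwise verification that the paper dismisses as ``straightforward'': multilinearity reduces the check to the eight homogeneous triple types, $V$ is an ideal so only $V$-components carry conditions in mixed cases, and the union of the mixed-case relations is exactly \eqref{eq:repp}, \eqref{eq:pmc} and \eqref{eq:pmd}. There is one bookkeeping slip, which is harmless: the triple $(a,v,c)$ actually gives $\frakl(a)\frakr(c)v=\frakr(c)\frakl(a)v=\frakr(c)\frakr(a)v$ (the last term is $(v\bullet a)\bullet c$), not $\frakr(a\cdot c)v$, but the $(u,b,c)$ case supplies $\frakr(a\cdot c)=\frakr(c)\frakr(a)$, so the combined set of conditions is unchanged.
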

\begin{proof}
	It is straightforward.
\end{proof}

\begin{example}
	Let $(A, \cdot)$ be a perm algebra. Then
	\begin{enumerate}
		\item
		      $(A, \cdot, L_{\cdot}, R_{\cdot})$ is an $A$-bimodule perm algebra.

		\item
		      A bimodule of $(A, \cdot)$ is equivalent to an $A$-bimodule perm algebra equipped with the trivial perm algebra structure.
		      In the sequel, we will adopt this perspective for convenience and always regard a bimodule of $(A, \cdot)$ as such an $A$-bimodule perm algebra.
	\end{enumerate}
\end{example}

%%%%%%%%%%%%%%%%%%%%%%%%%%%%%%%%%%%%%%%%%%%%%%%%%%%%%%%%%%%%%%%%%%%%%%%%%%%%%%%%
\subsection{\texorpdfstring{$\mathcal{O}$}{O}-operators and post-perm algebras}
We first introduce the notion of $\mathcal{O}$-operations and Rota-Baxter operators associated to $A$-bimodule perm algebras, motivated by their counterparts in the context of associative and Lie algebras \cite{bai2010double, bai2010nonabelian, bai2012O, kupershmidt1999classical}.
\begin{definition}\label{def:oo}
	Let $(A, \cdot)$ be a perm algebra, $(V, \cdot_V, \frakl, \frakr)$ be an $A$-bimodule perm algebra and $\lambda \in \mathbf{k}$.
	A linear map $T: V \rightarrow A$ is called an \textbf{$\mathcal{O}$-operator} of weight $\lambda$ on $(A, \cdot)$ associated to $(V, \cdot_V, \frakl, \frakr)$ if
	\begin{equation*}
		T(u) \cdot T(v) = T\big( \frakl(T(u))v + \frakr(T(v))u + \lambda u \cdot_V v \big), \;\; \forall u, v \in V. 
	\end{equation*}
	An $\mathcal{O}$-operator of weight $\lambda$ on $(A, \cdot)$ associated to $(A, \cdot, L_{\cdot}, R_{\cdot})$ is called a {\bf Rota-Baxter operator} of weight $\lambda$ on $(A, \cdot)$, i.e.,
	\begin{equation*}
		T(a) \cdot T(b) = T\big( T(a) \cdot b + a \cdot T(b) + \lambda a \cdot b \big), \;\; \forall a, b \in A.
	\end{equation*}
\end{definition}

\begin{remark}
	Regarding $(V, \frakl, \frakr)$ as an $A$-bimodule equipped with the trivial perm algebra structure, Definition~\ref{def:oo} reduces to the $\mathcal{O}$-operator introduced in \cite[Definition~3.34]{lin2025infinite}.
\end{remark}

\begin{remark}\label{rmk:ool}
	$T$ is an $\mathcal{O}$-operator of weight $\lambda$ on $(A, \cdot)$ associated to $(V, \cdot_V, \frakl, \frakr)$ if and only if $T$ is an $\mathcal{O}$-operator of weight $1$ on $(A, \cdot)$ associated to $(V, \lambda \cdot_V, \frakl, \frakr)$.
	If in addition $\lambda \neq 0$, this is equivalent to $\frac{T}{\lambda}$ being an $\mathcal{O}$-operator of weight $1$ on $(A, \cdot)$ associated to $(V, \cdot_V, \frakl, \frakr)$.
\end{remark}

Next, we introduce the notion of post-perm algebras, which are the perm analog of post-Lie algebras and dendriform trialgebras~\cite{loday2004trialgebras, vallette2007homology}.

\begin{definition}
	A \textbf{post-perm algebra} is a tuple $(A, \circ, \succ, \prec)$, where $(A, \circ)$ is a perm algebra, and $\prec, \succ: A \otimes A \rightarrow A$ are binary operations such that the following equalities hold for all $a, b, c \in A$:
	\begin{eqnarray*}
		(a \succ b + a \prec b + a \circ b) \succ c = a \succ (b \succ c) = b \succ (a \succ c), \\
		c \prec (a \succ b + a \prec b + a \circ b) = (c \prec a) \prec b = (a \succ c) \prec b = a \succ (c \prec b), \\
		a \succ (b \circ c) = (a \succ b) \circ c = (b \prec a) \circ c = b \circ (a \succ c), \\
		(c \circ b) \prec a = (b \circ c) \prec a = b \circ (c \prec a).
	\end{eqnarray*}
\end{definition}

\begin{remark}
	A post-perm algebra with trivial $\circ$ is exactly a pre-perm algebra as defined in~\cite{lin2025infinite}.
\end{remark}

\begin{proposition}\label{prop:pp2p}
	Let $(A, \circ, \succ, \prec)$ be a post-perm algebra.
	Define a binary operation $\cdot: A \otimes A \rightarrow A$ on $A$ by
	\begin{equation*}
		a \cdot b := a \succ b + a \prec b + a \circ b, \;\;
		\forall a, b \in A.
	\end{equation*}
	Then $(A, \cdot)$ is a perm algebra, called the {\bf associated perm algebra} of $(A, \circ, \succ, \prec)$.
	Moreover, $(A, \circ, L_\succ, R_\prec)$ is a $(A, \cdot)$-bimodule perm algebra.
\end{proposition}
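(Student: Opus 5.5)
The plan is to handle the two assertions in the opposite order. First I will show that $(A, \circ, L_\succ, R_\prec)$ satisfies every axiom of an $A$-bimodule perm algebra except possibly those involving $\cdot$. Then I will deduce the perm identity for $\cdot$ from the semi-direct product description, or directly.

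Write $\frakl = L_\succ$ and $\frakr = R_\prec$, so $\frakl(a)v = a \succ v$ and $\frakr(a)v = v \prec a$. I would check the two groups of conditions separately.

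The bimodule conditions \eqref{eq:repp} with respect to $(A,\cdot)$ are precisely the first two post-perm axioms. Indeed, $\frakl(a\cdot b)c = (a\cdot b)\succ c = a\succ(b\succ c) = b\succ(a\succ c)$ is the first axiom. Likewise
\[
\frakr(a\cdot b)c = c\prec(a\cdot b) = (c\prec a)\prec b = (a\succ c)\prec b = a\succ(c\prec b),
\]
which reads $\frakr(b)\frakr(a) = \frakr(b)\frakl(a) = \frakl(a)\frakr(b)$ and is the second axiom.

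The compatibilities \eqref{eq:pmc} and \eqref{eq:pmd} with $\cdot_V = \circ$ are the third and fourth axioms. For \eqref{eq:pmc}, the third axiom gives $a\succ(u\circ v) = (a\succ u)\circ v = (u\prec a)\circ v = u\circ(a\succ v)$. For \eqref{eq:pmd}, the fourth axiom gives $(v\circ u)\prec a = (u\circ v)\prec a = u\circ(v\prec a)$. Since $(A,\circ)$ is a perm algebra by hypothesis, this establishes the "moreover" part, modulo knowing that $(A,\cdot)$ is a perm algebra.

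For the perm identity of $\cdot$, I would expand $a\cdot(b\cdot c)$, $(a\cdot b)\cdot c$ and $(b\cdot a)\cdot c$ into the nine-term expressions in $\succ,\prec,\circ$. I would then group the terms by their outermost operation and match them as follows:
\begin{itemize}
\item the $\succ$-type terms, $(a\cdot b)\succ c$ versus $a\succ(b\succ c)$, via axiom 1;
\item the terms $a\succ(b\prec c)$ and $(a\succ b)\prec c$ via axiom 2;
\item the terms with $\circ$ via axioms 3 and 4 together with the perm identity of $\circ$.
\end{itemize}

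A cleaner route is to note that the operation $\cdot$ is exactly the structure used in the standard trick. The map $A\ltimes_{\frakl,\frakr}A\to A$, $(a,u)\mapsto a+u$, is a "homomorphism" identifying $a\cdot b$ with $\frakl(a)b+\frakr(b)a+a\circ b$. However, this presupposes that $\cdot$ is already perm, so the direct expansion seems necessary.

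Concretely, I would write
\[
a\cdot(b\cdot c) = a\succ(b\cdot c) + a\prec(b\cdot c) + a\circ(b\succ c) + a\circ(b\prec c) + a\circ(b\circ c)
\]
and
\[
(a\cdot b)\cdot c = (a\cdot b)\succ c + (a\succ b)\prec c + (a\prec b)\prec c + (a\circ b)\prec c + (a\succ b)\circ c + (a\prec b)\circ c + (a\circ b)\circ c .
\]
Axiom 1 equates $(a\cdot b)\succ c$ with $a\succ(b\succ c)$. Axiom 2, in the form $(a\prec b)\prec c = a\prec(b\cdot c)$ with roles renamed, gives $a\prec(b\cdot c) = (a\prec b)\prec c$. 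Axiom 2 also gives $a\succ(b\prec c) = (a\succ b)\prec c$. Axiom 3 gives $a\succ(b\circ c) = (a\succ b)\circ c$ and $a\circ(b\succ c) = (b\prec a)\circ c$. For the latter I need to match it against $(a\prec b)\circ c$, or rather against terms on the other side. Axiom 4 gives $a\circ(b\prec c) = (a\circ b)\prec c$.

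The main obstacle is this bookkeeping. The equality $a\circ(b\succ c) = (a\prec b)\circ c$ is needed, whereas axiom 3 only gives $b\circ(a\succ c)=(b\prec a)\circ c$, with swapped letters. I would resolve this by using axiom 3 as $a\circ(b\succ c) = (b\succ a)\circ c$ together with $(b\succ a)\circ c=(a\prec b)\circ c$, which follows from axiom 3 after relabelling.

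Left-commutativity $(a\cdot b)\cdot c=(b\cdot a)\cdot c$ then follows from the symmetric forms of the axioms:
\begin{itemize}
\item axiom 1 makes $(a\cdot b)\succ c$ symmetric in $a,b$;
\item axiom 2 gives $(c\prec a)\prec b = (a\succ c)\prec b$;
\item axioms 3 and 4, together with left-commutativity of $\circ$, handle the remaining terms.
\end{itemize}
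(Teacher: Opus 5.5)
Your proposal is correct and follows the paper's proof, which is exactly this direct verification. The bimodule and compatibility conditions are literally the four post-perm axioms, and both perm identities for $\cdot$ follow by the term-by-term matching you describe. One slip: axiom 3 gives $a\circ(b\succ c)=(a\prec b)\circ c$, not $(b\prec a)\circ c$. So there is no real obstacle here, since relabelling $a\leftrightarrow b$ in $(b\prec a)\circ c=b\circ(a\succ c)$ yields the needed identity directly.
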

\begin{proof}
	A direct computation shows that $(A, \cdot)$ is a perm algebra, and that $(A, \circ, L_\succ, R_\prec)$ is a $(A, \cdot)$-bimodule perm algebra. 
\end{proof}

The relationship between $\mathcal{O}$-operators and post-perm algebras is established in the following results.
\begin{theorem}\label{thm:o2pp}
	Let $(A, \cdot)$ be a perm algebra and $(V, \cdot_V, \frakl, \frakr)$ be an $A$-bimodule perm algebra.
	Let $T: V \rightarrow A$ be an $\mathcal{O}$-operator of weight $\lambda$ on $(A, \cdot)$ associated to $(V, \cdot_V, \frakl, \frakr)$.
	Define the following binary operations on $V$:
	\begin{equation}
		u \circ v = \lambda u \cdot_V v, \;
		u \succ v = \frakl(T(u))v, \;
		u \prec v = \frakr(T(v))u, \;
		\forall u, v \in V. \label{eq:oo2pp}
	\end{equation}
	Then $(V, \circ, \succ, \prec)$ is a post-perm algebra.
	Furthermore, $(V, \circ_{T})$ is a perm algebra and $T$ is a homomorphism of perm algebras from $(V, \circ_{T})$ to $(A, \cdot)$, where $\circ_{T}: A \otimes A \rightarrow A$ is defined by
	\begin{equation}
		u \circ_T v  := \frakl(T(u))v + \frakr(T(v))u + \lambda u \cdot_V v, \;\; \forall u, v \in V. \label{eq:soo2p}
	\end{equation}
\end{theorem}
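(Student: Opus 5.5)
We verify the post-perm axioms directly from the definitions, using three ingredients: the $\mathcal{O}$-operator identity, the bimodule identities \eqref{eq:repp}, and the bimodule perm algebra identities \eqref{eq:pmc}--\eqref{eq:pmd}.

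First, $(V,\circ)$ is a perm algebra: $\lambda\cdot_V$ is a scalar multiple of a perm product, and the perm identities are homogeneous of degree two in the product. The key observation for everything else is that, by definition,
\begin{equation*}
u\succ v+u\prec v+u\circ v = u\circ_T v ,
\end{equation*}
and the $\mathcal{O}$-operator condition says exactly $T(u\circ_T v)=T(u)\cdot T(v)$.

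With this, each post-perm axiom becomes a bimodule identity evaluated at $T(u),T(v)$.

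\textbf{First axiom.} We have
\begin{equation*}
(u\circ_T v)\succ w=\frakl(T(u\circ_T v))w=\frakl(T(u)\cdot T(v))w .
\end{equation*}
By \eqref{eq:repp} this equals $\frakl(T(u))\frakl(T(v))w=\frakl(T(v))\frakl(T(u))w$. These are $u\succ(v\succ w)$ and $v\succ(u\succ w)$ respectively.

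\textbf{Second axiom.} Similarly, $w\prec(u\circ_T v)=\frakr(T(u)\cdot T(v))w$. The identity $\frakr(a\cdot b)=\frakr(b)\frakr(a)=\frakr(b)\frakl(a)=\frakl(a)\frakr(b)$ then yields $(w\prec u)\prec v$, $(u\succ w)\prec v$ and $u\succ(w\prec v)$.

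\textbf{Third axiom.} Taking $a=T(u)$, the expression $u\succ(v\circ w)=\lambda\,\frakl(T(u))(v\cdot_V w)$ is handled by \eqref{eq:pmc}. It gives $(u\succ v)\circ w$, then $\lambda(\frakr(T(u))v)\cdot_V w=(v\prec u)\circ w$, then $v\circ(u\succ w)$.

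\textbf{Fourth axiom.} The fourth axiom follows in the same way from \eqref{eq:pmd} with $a=T(u)$.

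Once $(V,\circ,\succ,\prec)$ is shown to be a post-perm algebra, Proposition~\ref{prop:pp2p} says its associated product $u\succ v+u\prec v+u\circ v$ is a perm product. That product is $\circ_T$, so $(V,\circ_T)$ is a perm algebra. The identity $T(u\circ_T v)=T(u)\cdot T(v)$ is then precisely the statement that $T$ is a homomorphism.

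Nothing here is a real obstacle. The only care needed is bookkeeping: matching each equality in the post-perm axioms to the correct factor ordering in \eqref{eq:repp}, especially the $\frakr$-chain with its reversed order $\frakr(b)\frakr(a)$, and tracking the scalar $\lambda$ in the $\circ$-axioms, where it factors out linearly.
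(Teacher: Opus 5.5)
Your proposal is correct and follows the same route as the paper. You verify the post-perm axioms directly, using the $\mathcal{O}$-operator identity $T(u\circ_T v)=T(u)\cdot T(v)$ together with \eqref{eq:repp} for the first two axioms and \eqref{eq:pmc}--\eqref{eq:pmd} for the last two; you then invoke Proposition~\ref{prop:pp2p} to get that $(V,\circ_T)$ is a perm algebra and read off the homomorphism property, and your write-up just supplies the axiom-by-axiom computations that the paper leaves to the reader.
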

\begin{proof}
	That $(V, \circ, \succ, \prec)$ is a post-perm algebra can be verified by checking each of the defining relations of a post-perm algebra.
	Then, Proposition~\ref{prop:pp2p} shows that $(V, \circ_{T})$ is a perm algebra, as the associated perm algebra of $(V, \circ, \succ, \prec)$.
	It now follows readily that $T$ is a homomorphism of perm algebras from $(V, \circ_{T})$ to $(A, \cdot)$.
\end{proof}

\begin{proposition}\label{prop:ippeq}
	Let $(A, \cdot)$ be a perm algebra.
	Then there is a post-perm algebra structure on $A$, whose associated perm algebra is $(A, \cdot)$, if and only if there is an invertible $\mathcal{O}$-operator of weight $1$ on $(A, \cdot)$ associated to an $A$-bimodule perm algebra.
\end{proposition}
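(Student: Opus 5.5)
The two directions rest on Theorem~\ref{thm:o2pp} and Proposition~\ref{prop:pp2p}, and both are transports of structure along the operator.

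\emph{($\Leftarrow$)} Suppose $T: V \to A$ is an invertible $\mathcal{O}$-operator of weight $1$ on $(A,\cdot)$ associated to an $A$-bimodule perm algebra $(V,\cdot_V,\frakl,\frakr)$. By Theorem~\ref{thm:o2pp}, equation~\eqref{eq:oo2pp} with $\lambda=1$ makes $(V,\circ,\succ,\prec)$ a post-perm algebra. Its associated perm algebra is $(V,\circ_T)$, and $T$ is a perm algebra homomorphism $(V,\circ_T)\to(A,\cdot)$. Since $T$ is bijective, we transport the post-perm structure to $A$ by
\begin{equation*}
a \circ' b := T\big(T^{-1}(a)\cdot_V T^{-1}(b)\big), \quad
a \succ' b := T\big(\frakl(a)T^{-1}(b)\big), \quad
a \prec' b := T\big(\frakr(b)T^{-1}(a)\big).
\end{equation*}
Each of the defining identities of a post-perm algebra is preserved under transport by a linear isomorphism, so $(A,\circ',\succ',\prec')$ is a post-perm algebra. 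Its associated operation is
\begin{equation*}
a\succ' b+a\prec' b+a\circ' b = T\big(T^{-1}(a)\circ_T T^{-1}(b)\big) = a\cdot b,
\end{equation*}
where the last equality is exactly the $\mathcal{O}$-operator identity with $u=T^{-1}(a)$, $v=T^{-1}(b)$. Hence the associated perm algebra is $(A,\cdot)$.

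\emph{($\Rightarrow$)} Suppose $(A,\circ,\succ,\prec)$ is a post-perm algebra whose associated perm algebra is $(A,\cdot)$. By Proposition~\ref{prop:pp2p}, $(A,\circ,L_\succ,R_\prec)$ is an $(A,\cdot)$-bimodule perm algebra. Take $T=\id_A$, which is invertible. The weight-$1$ condition reads
\begin{equation*}
a\cdot b = L_\succ(a)b + R_\prec(b)a + a\circ b = a\succ b+a\prec b+a\circ b,
\end{equation*}
which is the definition of $\cdot$. So $\id_A$ is an invertible $\mathcal{O}$-operator of weight $1$ associated to this bimodule perm algebra.

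No step is a real obstacle. The only point to watch in ($\Leftarrow$) is that the transported operations really are the images of $\circ,\succ,\prec$ under $T$; this uses $\frakl(T(u))v = u\succ v$, which is immediate from~\eqref{eq:oo2pp}.
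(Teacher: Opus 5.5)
Your proposal is correct and follows essentially the same argument as the paper. For $(\Rightarrow)$ you take $\id_A$ with the bimodule perm algebra $(A,\circ,L_\succ,R_\prec)$. For $(\Leftarrow)$ you transport the post-perm structure of Theorem~\ref{thm:o2pp} along $T$, using the same operations $\circ',\succ',\prec'$ as the paper. You also spell out the check $T\big(T^{-1}(a)\circ_T T^{-1}(b)\big)=a\cdot b$, which the paper leaves implicit.
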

\begin{proof}
	Suppose that $(A, \circ, \succ, \prec)$ is a post-perm algebra, whose associated perm algebra is $(A, \cdot)$.
	Then $\id: A \rightarrow A$ is an invertible $\mathcal{O}$-operator of weight $1$ on $(A, \cdot)$ associated to the $A$-bimodule perm algebra $(A, \circ, L_\succ, R_\prec)$.
	
	Conversely, suppose that $T: V \rightarrow A$ is an invertible $\mathcal{O}$-operator of weight $1$ on $(A, \cdot)$ associated to an $A$-bimodule perm algebra $(V, \cdot_V, \frakl, \frakr)$.
	Consequently, Theorem~\ref{thm:o2pp} yields a post-perm algebra structure $(V, \circ, \succ, \prec)$ via Eq.~\eqref{eq:oo2pp} with $\lambda = 1$.
	Since $T$ is invertible, this structure can be transformed into $(A, \circ_T, \succ_T, \prec_T)$, explicitly given by
	\begin{equation*}
		a \circ_T b = T(T^{-1}(a) \cdot_V T^{-1}(b)), \;\,
		a \succ_T b = T(\frakl(a) T^{-1}(b)), \;\;
		a \prec_T b = T(\frakr(b) T^{-1}(a)), \;\;
		\forall a, b \in A,
	\end{equation*}
	whose associated perm algebra structure is precisely $(A, \cdot)$.
\end{proof}

\begin{corollary}
	Let $(A, \cdot)$ be a perm algebra, and $T: A \rightarrow A$ be a Rota-Baxter operator of weight $\lambda$ on $(A, \cdot)$.
	Then there is a post-perm algebra structure $(A, \circ, \succ, \prec)$ on $A$ given by
	\begin{align*}
		a \circ b = \lambda a \cdot b, \;
		a \succ b = T(a) \cdot b, \;
		a \prec b = a \cdot T(b), \;
		\forall a, b \in A.
	\end{align*}
	If in addition $T$ is invertible, then there is a post-perm algebra structure $(A, \circ_T, \succ_T, \prec_T)$ on $A$, whose associated perm algebra is $(A, \cdot)$, given by
	\begin{equation*}
		a \circ_T b = \lambda T(T^{-1}(a) \cdot T^{-1}(b)), \;\;
		a \succ_T b = T(a \cdot T^{-1}(b)), \;\;
		a \prec_T b = T(T^{-1}(a) \cdot b), \;\;
		\forall a, b \in A.
	\end{equation*}
\end{corollary}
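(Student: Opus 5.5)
The corollary follows by specializing Theorem~\ref{thm:o2pp} and Proposition~\ref{prop:ippeq} to the $A$-bimodule perm algebra $(A, \cdot, L_{\cdot}, R_{\cdot})$. By the Example following the semi-direct product proposition, this quadruple is an $A$-bimodule perm algebra. By Definition~\ref{def:oo}, a Rota-Baxter operator $T$ of weight $\lambda$ is exactly an $\mathcal{O}$-operator of weight $\lambda$ on $(A,\cdot)$ associated to it. Applying Theorem~\ref{thm:o2pp} with $V = A$, $\cdot_V = \cdot$, $\frakl = L_\cdot$, $\frakr = R_\cdot$, the operations in Eq.~\eqref{eq:oo2pp} become
\begin{equation*}
a \circ b = \lambda a\cdot b, \qquad a \succ b = L_\cdot(T(a))b = T(a)\cdot b, \qquad a\prec b = R_\cdot(T(b))a = a\cdot T(b).
\end{equation*}
These are the operations in the first assertion, so $(A,\circ,\succ,\prec)$ is a post-perm algebra.

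For the second assertion, assume $T$ is invertible. Here the weight is $\lambda$ rather than $1$, so Proposition~\ref{prop:ippeq} does not apply literally. Instead I will repeat the transport argument from its proof, keeping $\lambda$ throughout. Define operations on $A$ by pulling back along $T^{-1}$ and pushing forward along $T$:
\begin{align*}
a\circ_T b &:= T\bigl(T^{-1}(a)\circ T^{-1}(b)\bigr) = \lambda\, T\bigl(T^{-1}(a)\cdot T^{-1}(b)\bigr),\\
a\succ_T b &:= T\bigl(T(T^{-1}(a))\cdot T^{-1}(b)\bigr) = T\bigl(a\cdot T^{-1}(b)\bigr),\\
a\prec_T b &:= T\bigl(T^{-1}(a)\cdot T(T^{-1}(b))\bigr) = T\bigl(T^{-1}(a)\cdot b\bigr).
\end{align*}
These match the formulas stated in the corollary. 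The defining identities of a post-perm algebra are multilinear identities in the three operations. Since $T$ is a linear isomorphism carrying $(A,\circ,\succ,\prec)$ to $(A,\circ_T,\succ_T,\prec_T)$, the transported structure is again a post-perm algebra.

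It remains to identify the associated perm algebra. Set $x = T^{-1}(a)$ and $y = T^{-1}(b)$. Then
\begin{equation*}
a\succ_T b + a\prec_T b + a\circ_T b = T\bigl(T(x)\cdot y + x\cdot T(y) + \lambda x\cdot y\bigr).
\end{equation*}
By the Rota-Baxter identity the right-hand side equals $T(x)\cdot T(y) = a\cdot b$. So the associated perm algebra is $(A,\cdot)$, and the argument is complete. The only point requiring care is that the transport argument must keep track of the factor $\lambda$ in $\circ_T$ rather than cite Proposition~\ref{prop:ippeq} directly.
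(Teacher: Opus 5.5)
Your proposal is correct and follows the paper's own proof: it specializes Theorem~\ref{thm:o2pp} to the $A$-bimodule perm algebra $(A,\cdot,L_\cdot,R_\cdot)$, then transports the resulting post-perm structure along the invertible $T$. Your explicit check via the Rota-Baxter identity that the associated perm algebra is $(A,\cdot)$, and your tracking of the weight $\lambda$ in $\circ_T$, spell out what the paper leaves implicit; alternatively, you could absorb $\lambda$ via Remark~\ref{rmk:ool} and cite Proposition~\ref{prop:ippeq} directly.
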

\begin{proof}
	Since $(A, \cdot, L_{\cdot}, R_{\cdot})$ is an $A$-bimodule perm algebra, Theorem~\ref{thm:o2pp} implies that $(A, \circ, \succ, \prec)$ is a post-perm algebra.
	When $T$ is invertible, this structure can be transformed into $(A, \circ_T, \succ_T, \prec_T)$, whose associated perm algebra is exactly $(A, \cdot)$.
\end{proof}

Theorem~\ref{thm:o2pp} demonstrates that an $\mathcal{O}$-operator yields a new perm algebra structure on $V$.
At the end of this subsection, we provide a necessary and sufficient condition for a general linear map $T$ to induce such a new perm algebra structure.
\begin{lemma}\label{lem:npaeq}
	Let $(A, \cdot)$ be a perm algebra, and $(V, \cdot_V, \frakl, \frakr)$ be an $A$-bimodule perm algebra.
	Let $T: V \rightarrow A$ be a linear map and $\lambda \in \mathbf{k}$.
	Define a binary operation $\circ_T: V \otimes V \rightarrow V$ by Eq.~\eqref{eq:soo2p}.
	Then $(V, \circ_{T})$ is a perm algebra if and only if for all $u, v, w \in V$:
	\begin{align}
		\frakl\big( \mathcal{A}_{T}^{\lambda}(u, v) \big)w = \frakl\big( \mathcal{A}_{T}^{\lambda}(v, u) \big) w = \frakr\big( \mathcal{A}_{T}^{\lambda}(v, w) \big) u, \label{eq:soo}
	\end{align}
	where
	\begin{equation*}
		\mathcal{A}_{T}^{\lambda}(u, v)  = T(u) \cdot T(v) - T(\frakl(T(u))v + \frakr(T(v))u + \lambda u \cdot_V v).
	\end{equation*}
\end{lemma}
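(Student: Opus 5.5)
We compute the two perm defects of $\circ_T$ directly and express each as an action of $\mathcal{A}_T^\lambda$ on $V$, with all remaining terms cancelling by the bimodule and bimodule-perm axioms. The perm identities for $\circ_T$ read
\begin{equation*}
(u\circ_T v)\circ_T w = u\circ_T(v\circ_T w), \qquad (u\circ_T v)\circ_T w=(v\circ_T u)\circ_T w .
\end{equation*}
The key observation is that $T(u\circ_T v)=T(u)\cdot T(v)-\mathcal{A}_T^\lambda(u,v)$, which follows from the definition of $\mathcal{A}_T^\lambda$.

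First, expand $(u\circ_T v)\circ_T w = \frakl(T(u\circ_T v))w + \frakr(T(w))(u\circ_T v) + \lambda (u\circ_T v)\cdot_V w$. Substituting the key observation, the first term becomes $\frakl(T(u)\cdot T(v))w - \frakl(\mathcal{A}_T^\lambda(u,v))w$. Similarly, $u\circ_T(v\circ_T w)$ contains the term
\begin{equation*}
\frakr(T(v\circ_T w))u=\frakr(T(v)\cdot T(w))u-\frakr(\mathcal{A}_T^\lambda(v,w))u .
\end{equation*}
All the other terms are expressions of the form $\frakl(a)\frakl(b)w$, $\frakr(a)\frakl(b)u$, $\frakl(a)(u\cdot_V v)$, products $u\cdot_V(v\cdot_V w)$, and so on, where $a,b\in\{T(u),T(v),T(w)\}$. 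The plan is to show that these "$T$-free defect" terms cancel identically in the difference:
\begin{itemize}
\item Pure $\frakl\frakl$, $\frakr\frakr$ and $\frakl\frakr$ terms cancel using Eq.~\eqref{eq:repp}; for instance, $\frakl(T(u)\cdot T(v))=\frakl(T(u))\frakl(T(v))$ and $\frakr(T(v)\cdot T(w))=\frakr(T(w))\frakr(T(v))=\frakl(T(v))\frakr(T(w))$.
\item Mixed terms containing exactly one $\cdot_V$ and one action cancel using Eqs.~\eqref{eq:pmc} and~\eqref{eq:pmd}, and terms with $\lambda^2$ cancel by the perm identity of $\cdot_V$.
\end{itemize}
The result should be
\begin{align*}
(u\circ_T v)\circ_T w-u\circ_T(v\circ_T w)&=-\frakl(\mathcal{A}_T^\lambda(u,v))w+\frakr(\mathcal{A}_T^\lambda(v,w))u,\\
(u\circ_T v)\circ_T w-(v\circ_T u)\circ_T w&=-\frakl(\mathcal{A}_T^\lambda(u,v))w+\frakl(\mathcal{A}_T^\lambda(v,u))w.
\end{align*}
For the second identity, the $\frakl(T(u)\cdot T(v))$ versus $\frakl(T(v)\cdot T(u))$ terms agree because $\frakl(a\cdot b)=\frakl(b\cdot a)$, as both equal $\frakl(a)\frakl(b)=\frakl(b)\frakl(a)$. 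The $\frakr(T(w))$ terms agree because $\frakr(c)(x\cdot_V y)=\frakr(c)(y\cdot_V x)$ by~\eqref{eq:pmd} together with $\frakr(c)\frakl(a)=\frakr(c)\frakr(a)$-type relations from~\eqref{eq:repp}. The $\lambda$-terms agree by~\eqref{eq:pmc} and the left-commutativity of $\cdot_V$. Once both displayed identities hold, $(V,\circ_T)$ is perm if and only if both right-hand sides vanish for all $u,v,w$, which is exactly Eq.~\eqref{eq:soo}.

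The main obstacle is the bookkeeping in the first identity. There are roughly nine terms on each side, and each must be matched to a specific axiom. I would organize the matching by grading the terms by how many $\lambda$'s and how many $\frakl/\frakr$'s they contain; the key observation guarantees that each graded piece is separately an axiom instance. A useful sanity check is Theorem~\ref{thm:o2pp}: when $\mathcal{A}_T^\lambda\equiv 0$, both differences must vanish. This confirms that every non-$\mathcal{A}$ term cancels purely from the axioms, since the $\mathcal{O}$-operator condition is only ever used through $\mathcal{A}_T^\lambda$.
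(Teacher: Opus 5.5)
Your proposal is correct and follows the paper's own proof. The paper states exactly your two identities,
$(u\circ_T v)\circ_T w-u\circ_T(v\circ_T w)=-\frakl(\mathcal{A}_T^\lambda(u,v))w+\frakr(\mathcal{A}_T^\lambda(v,w))u$ and $(u\circ_T v)\circ_T w-(v\circ_T u)\circ_T w=-\frakl(\mathcal{A}_T^\lambda(u,v))w+\frakl(\mathcal{A}_T^\lambda(v,u))w$,
as a direct check, and then reads off Eq.~\eqref{eq:soo}. The term-by-term cancellations you sketch via \eqref{eq:repp}, \eqref{eq:pmc}, \eqref{eq:pmd} and the perm identity of $\cdot_V$ do go through; the explicit matching of terms is what proves the lemma. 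The appeal to Theorem~\ref{thm:o2pp} is only a heuristic sanity check, because vanishing of the difference for $\mathcal{O}$-operators does not by itself force the $\mathcal{A}$-free terms to vanish for an arbitrary $T$.
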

\begin{proof}
	For all $u, v, w \in V$, it is easy to check that
	\begin{align*}
		 & (u \circ_T v) \circ_T w - u \circ_T (v \circ_T w) = -\frakl\big( \mathcal{A}_{T}^{\lambda}(u, v) \big)w + \frakr\big( \mathcal{A}_{T}^{\lambda}(v, w) \big) u, \\
		 & (u \circ_T v) \circ_T w - (v \circ_T u) \circ_T w = -\frakl\big( \mathcal{A}_{T}^{\lambda}(u, v) \big)w + \frakl\big( \mathcal{A}_{T}^{\lambda}(v, u) \big) w.     
	\end{align*}
	Hence, $(V, \circ_{T})$ is a perm algebra if and only if Eq.~\eqref{eq:soo} holds.
\end{proof}

%%%%%%%%%%%%%%%%%%%%%%%%%%%%%%%%%%%%%%%%%%%%%%%%%%%%%%%%%%%%%%%%%%%%%%%%%%%%%%%%
%%%%%%%%%%%%%%%%%%%%%%%%%%%%%%%%%%%%%%%%%%%%%%%%%%%%%%%%%%%%%%%%%%%%%%%%%%%%%%%%
%%%%%%%%%%%%%%%%%%%%%%%%%%%%%%%%%%%%%%%%%%%%%%%%%%%%%%%%%%%%%%%%%%%%%%%%%%%%%%%%
%%%%%%%%%%%%%%%%%%%%%%%%%%%%%%%%%%%%%%%%%%%%%%%%%%%%%%%%%%%%%%%%%%%%%%%%%%%%%%%%
\section{Extended \texorpdfstring{$\mathcal{O}$}{o}-operators on perm algebras}\label{sec:extended-o}
In this section, we introduce the notion of extended $\mathcal{O}$-operators on perm algebras as a generalization of ordinary $\mathcal{O}$-operators, following the philosophy of~\cite{bai2010nonabelian, bai2012O, yu2026extended}.
We show that extended $\mathcal{O}$-operators induce new perm algebra structures and give an equivalent characterization of extended $\mathcal{O}$-operators via the $\pi_\pm$ decomposition.

%%%%%%%%%%%%%%%%%%%%%%%%%%%%%%%%%%%%%%%%%%%%%%%%%%%%%%%%%%%%%%%%%%%%%%%%%%%%%%%%
\subsection{From extended \texorpdfstring{$\mathcal{O}$}{O}-operators to new perm algebra structures}

\begin{definition}
	Let $(A, \cdot)$ be a perm algebra, and $(V, \frakl, \frakr)$ be a bimodule of $(A, \cdot)$.
	A linear map $S: V \rightarrow A$ is called {\bf balanced} associated to $(V, \frakl, \frakr)$ if
	\begin{equation}
		\frakl(S(u))v = \frakr(S(v))u, \;\;
		\forall u, v \in V. \label{eq:bal}
	\end{equation}
	If in addition, $S: V \rightarrow A$ is an $A$-bimodule homomorphism from $(V, \frakl, \frakr)$ to $(A, L_{\cdot}, R_{\cdot})$, i.e., 
	\begin{equation}
		S(\frakl(a)u) = a \cdot S(u), \;\; S(\frakr(a)u) = S(u) \cdot a, \;\; \forall a \in A, \; u \in V, \label{eq:ainv}
	\end{equation}
	then $S$ is called a {\bf balanced $A$-bimodule homomorphism} from $(V, \frakl, \frakr)$ to $(A, L_{\cdot}, R_{\cdot})$.
\end{definition}

\begin{lemma}\label{lem:baie}
	Let $(A, \cdot)$ be a perm algebra, and $(V, \cdot_V, \frakl, \frakr)$ be an $A$-bimodule perm algebra.
	Suppose that $S: V \rightarrow A$ is balanced associated to $(V, \frakl, \frakr)$.
	Then
	\begin{equation}
		\frakl(S(u \cdot_V v))w = \big(\frakl(S(u))v\big) \cdot_V w, \;\;
		\frakr(S(u \cdot_V v))w = w \cdot_V \big(\frakr(S(v))u\big) , \;\;
		\forall u,v,w \in V. \label{eq:eqvm}
	\end{equation}
\end{lemma}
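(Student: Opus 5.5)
The plan is to prove both identities in \eqref{eq:eqvm} by direct rewriting. We will use only the balanced condition \eqref{eq:bal}, which lets us trade $\frakl(S(x))y$ for $\frakr(S(y))x$ and conversely. We will also use the compatibility relations \eqref{eq:pmc} and \eqref{eq:pmd} of the $A$-bimodule perm algebra, which move the actions $\frakl(a),\frakr(a)$ across the product $\cdot_V$. The bimodule homomorphism property \eqref{eq:ainv} is not needed. Each identity should follow from a short chain of equalities, alternating between \eqref{eq:bal} and the compatibility relations.

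For the first identity we start from $\frakl(S(u\cdot_V v))w$.
\begin{enumerate}
\item Applying \eqref{eq:bal} with the pair $(u\cdot_V v, w)$ rewrites it as $\frakr(S(w))(u\cdot_V v)$.
\item By \eqref{eq:pmd} with $a=S(w)$ this equals $u\cdot_V(\frakr(S(w))v)$.
\item Using \eqref{eq:bal} again, this is $u\cdot_V(\frakl(S(v))w)$.
\item The last equality in \eqref{eq:pmc}, read backwards, turns it into $(\frakl(S(v))u)\cdot_V w$.
\item Applying \eqref{eq:bal} once more gives $(\frakr(S(u))v)\cdot_V w$.
\item The equality $(\frakr(a)u)\cdot_V v=(\frakl(a)u)\cdot_V v$ from \eqref{eq:pmc} yields $(\frakl(S(u))v)\cdot_V w$, as required.
\end{enumerate}

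For the second identity we start from $\frakr(S(u\cdot_V v))w$.
\begin{enumerate}
\item By \eqref{eq:bal} it equals $\frakl(S(w))(u\cdot_V v)$.
\item By \eqref{eq:pmc} this is $u\cdot_V(\frakl(S(w))v)$.
\item By \eqref{eq:bal} this equals $u\cdot_V(\frakr(S(v))w)$.
\item Now \eqref{eq:pmd} with $a=S(v)$ gives $u\cdot_V(\frakr(S(v))w)=\frakr(S(v))(u\cdot_V w)$.
\item The first equality in \eqref{eq:pmd} gives $\frakr(S(v))(u\cdot_V w)=\frakr(S(v))(w\cdot_V u)$.
\item Applying \eqref{eq:pmd} once more gives $\frakr(S(v))(w\cdot_V u)=w\cdot_V(\frakr(S(v))u)$, which is the right-hand side.
\end{enumerate}

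The only real obstacle is choosing the order of the moves: some natural first steps lead to expressions like $(\frakr(S(u))w)\cdot_V v$ that do not close up. The guiding principle is to use \eqref{eq:bal} to push $S$ onto the variable that must end up inside the action, here $u$ in the first identity and $v$ in the second. Then \eqref{eq:pmc} and \eqref{eq:pmd} reposition the factors, in particular using the symmetry $\frakr(a)(u\cdot_V w)=\frakr(a)(w\cdot_V u)$ to swap $u$ and $w$.
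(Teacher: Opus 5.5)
Your proof is correct and is essentially the paper's. The first chain matches the paper's, except that the paper uses the literal last equality of \eqref{eq:pmc} to reach $(\frakr(S(v))u)\cdot_V w$ and then finishes with a single application of \eqref{eq:bal}; for the second identity the paper uses \eqref{eq:pmc} twice where you use \eqref{eq:pmd}, namely $\frakl(S(w))(u\cdot_V v)=(\frakr(S(w))u)\cdot_V v=(\frakl(S(u))w)\cdot_V v=w\cdot_V(\frakl(S(u))v)=w\cdot_V(\frakr(S(v))u)$, which also shows your closing remark is mistaken, since $(\frakr(S(u))w)\cdot_V v=w\cdot_V(\frakl(S(u))v)$ by \eqref{eq:pmc} and so that expression does close up after one more use of \eqref{eq:bal}.
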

\begin{proof}
	For all $u, v, w \in V$, we have
	\begin{align*}
		&\frakl(S(u \cdot_V v))w \overset{\eqref{eq:bal}}{=} \frakr(S(w))(u \cdot_V v) \overset{\eqref{eq:pmd}}{=} u \cdot_V (\frakr(S(w))v) \overset{\eqref{eq:bal}}{=} u \cdot_V (\frakl(S(v))w) \\
		&\overset{\eqref{eq:pmc}}{=} (\frakr(S(v))u) \cdot_V w \overset{\eqref{eq:bal}}{=} (\frakl(S(u))v) \cdot_V w, \\
		&\frakr(S(u \cdot_V v))w \overset{\eqref{eq:bal}}{=} \frakl(S(w))(u \cdot_V v) \overset{\eqref{eq:pmc}}{=} (\frakr(S(w))u) \cdot_V v \overset{\eqref{eq:bal}}{=} (\frakl(S(u))w) \cdot_V v \\
		&\overset{\eqref{eq:pmc}}{=} w \cdot_V (\frakl(S(u))v) \overset{\eqref{eq:bal}}{=} w \cdot_V (\frakr(S(v))u).
	\end{align*}
	The proof is complete.
\end{proof}

\begin{definition}
	Let $(A, \cdot)$ be a perm algebra, and $(V, \cdot_V, \frakl, \frakr)$ be an $A$-bimodule perm algebra.
	Let $T, S: V \to A$ be linear maps, and $\kappa, \gamma, \lambda \in \mathbf{k}$.
	Then $T$ is called an {\bf extended $\mathcal{O}$-operator of weight $\lambda$ with extension $S$ of mass $(\kappa, \gamma)$} on $(A, \cdot)$ associated to $(V, \cdot_V, \frakl, \frakr)$ if
	\begin{equation}
		T(u) \cdot T(v) - T\big(\frakl(T(u))v + \frakr(T(v))u + \lambda u \cdot_V v\big)  = \kappa S(u) \cdot S(v) + \gamma S(u \cdot_V v), \;\; \forall u, v \in V. \label{eq:exto}
	\end{equation}
\end{definition}

\begin{remark}
	The parameters $\kappa, \gamma$ and $\lambda$ in the above definition are introduced so that different cases could be treated uniformly when these parameters vary.
\end{remark}

\begin{example}
	Let $(A, \cdot)$ be a perm algebra.
	\begin{enumerate}

		\item
		      $S = \id: A \rightarrow A$ is a balanced $A$-bimodule homomorphism from $(A, L_{\cdot}, R_{\cdot})$ to $(A, L_{\cdot}, R_{\cdot})$.

		\item
		  	  If $S = 0$, then an extended $\mathcal{O}$-operator of weight $\lambda$ with extension $S$ of mass $(\kappa, \gamma)$ on $(A, \cdot)$ associated to $(V, \cdot_V, \frakl, \frakr)$ reduces to an $\mathcal{O}$-operator of weight $\lambda$ in Definition~\ref{def:oo}.
		  	  
		\item 
			  An extended $\mathcal{O}$-operator of weight $\lambda$ with extension $S$ of mass $(\kappa, \gamma)$ on $(A, \cdot)$ associated to $(V, \frakl, \frakr)$ reduces to an extended $\mathcal{O}$-operator of weight $0$ with extension $S$ of mass $(\kappa, 0)$ on $(A, \cdot)$ associated to $(V, \frakl, \frakr)$.

	\end{enumerate}
\end{example}

\begin{proposition}\label{prop:bae2amp}
	Let $(A, \cdot)$ be a perm algebra, and $(V, \cdot_V, \frakl, \frakr)$ be an $A$-bimodule perm algebra.
	Suppose that $S: V \to A$ is a balanced $A$-bimodule homomorphism from $(V, \frakl, \frakr)$ to $(A, L_{\cdot}, R_{\cdot})$.
	Then $(V, \circ_{+}, \frakl, \frakr)$ (resp. $(V, \circ_{-}, \frakl, \frakr)$) is an $A$-bimodule perm algebra, where $\circ_{+}$ (resp. $\circ_{-}$) is defined by
	\begin{align*}
		u \circ_{+} v = \lambda u \cdot_V v - 2 \frakl(S(u))v \quad \text{(resp. $u \circ_{-} v = \lambda u \cdot_V v + 2 \frakl(S(u))v$)}, \;\; \forall u, v \in V.
	\end{align*}
\end{proposition}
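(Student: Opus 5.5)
Since $(V,\frakl,\frakr)$ is already a bimodule of $(A,\cdot)$, we only need two things for $\circ_\pm$: that $(V,\circ_\pm)$ is a perm algebra, and that the compatibility conditions \eqref{eq:pmc} and \eqref{eq:pmd} hold with $\cdot_V$ replaced by $\circ_\pm$. Write $u\ast v := \frakl(S(u))v$, so that $u\circ_\pm v = \lambda u\cdot_V v \mp 2\, u\ast v$. Both the perm identities and \eqref{eq:pmc}--\eqref{eq:pmd} are linear in each factor of a bilinear combination, so the plan is to expand each identity into three kinds of terms:
\begin{itemize}
\item pure $\cdot_V$ terms, which hold because $(V,\cdot_V,\frakl,\frakr)$ is an $A$-bimodule perm algebra;
\item pure $\ast$ terms;
\item mixed terms, with coefficient $\mp 2\lambda$.
\end{itemize}
The sign and the factor $2$ play no role, so $\circ_+$ and $\circ_-$ are handled simultaneously.

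\textbf{Pure $\ast$ terms.} The key observation is that $(V,\ast,\frakl,\frakr)$ is itself an $A$-bimodule perm algebra. By \eqref{eq:ainv} and \eqref{eq:repp},
\[
(u\ast v)\ast w=\frakl(S(\frakl(S(u))v))w=\frakl(S(u)\cdot S(v))w=\frakl(S(u))\frakl(S(v))w=u\ast(v\ast w).
\]
Left commutativity follows from $\frakl(S(u)\cdot S(v))=\frakl(S(v)\cdot S(u))$, which is part of \eqref{eq:repp}.

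Compatibility of $\ast$ with $\frakl,\frakr$ uses \eqref{eq:repp} together with \eqref{eq:ainv} and balancedness \eqref{eq:bal}. For example:
\begin{itemize}
\item $\frakl(a)(u\ast v)=\frakl(a\cdot S(u))v=\frakl(S(\frakl(a)u))v=(\frakl(a)u)\ast v$;
\item $(\frakr(a)u)\ast v=\frakl(S(u)\cdot a)v=\frakl(S(u))\frakl(a)v=u\ast(\frakl(a)v)$;
\item $\frakr(a)(u\ast v)=\frakr(a)\frakl(S(u))v=\frakr(S(u)\cdot a)v$. Using \eqref{eq:bal} and \eqref{eq:repp}, this is compared with $\frakr(a)(v\ast u)=\frakr(a)\frakr(S(u))v$ and with $u\ast(\frakr(a)v)=\frakl(S(u))\frakr(a)v=\frakr(S(u)\cdot a)v$.
\end{itemize}
The remaining equalities needed for \eqref{eq:pmd} are obtained the same way from \eqref{eq:repp}.

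\textbf{Mixed terms.} These are the main obstacle. For associativity, the mixed contribution is
\[
(u\cdot_V v)\ast w+(u\ast v)\cdot_V w-u\cdot_V(v\ast w)-u\ast(v\cdot_V w),
\]
and there is an analogous expression for left commutativity. To handle them:
\begin{itemize}
\item Lemma~\ref{lem:baie} gives $(u\cdot_V v)\ast w=(u\ast v)\cdot_V w$.
\item By \eqref{eq:pmc}, $u\ast(v\cdot_V w)=\frakl(S(u))(v\cdot_V w)=(u\ast v)\cdot_V w$.
\item Also by \eqref{eq:pmc}, $(u\ast v)\cdot_V w = v\cdot_V(\frakl(S(u))w)=v\cdot_V(u\ast w)$. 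This has to be matched against $u\cdot_V(v\ast w)$; the intended route is to use \eqref{eq:bal} to swap between $\frakl(S(\cdot))$ and $\frakr(S(\cdot))$, and then \eqref{eq:pmc}--\eqref{eq:pmd}.
\end{itemize}
In the end every mixed term should reduce to a common normal form such as $(u\ast v)\cdot_V w$. Left commutativity is checked the same way, using $(u\cdot_V v)\ast w=(u\ast v)\cdot_V w=(v\ast u)\cdot_V w$. The second equality here follows from \eqref{eq:bal} and \eqref{eq:pmc}: $(\frakl(S(u))v)\cdot_V w=(\frakr(S(v))u)\cdot_V w=(\frakl(S(v))u)\cdot_V w$.

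One caveat: if some mixed identity does not close up, the fallback is to check whether the cross terms cancel in pairs between the two sides rather than individually. The bookkeeping of which of \eqref{eq:bal}, \eqref{eq:pmc}, \eqref{eq:pmd} to apply at each step is where the real work lies. Once all three kinds of terms are verified, the claim follows for both choices of sign.
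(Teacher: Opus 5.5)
Your proposal is correct and is essentially the paper's argument. Like the paper, you expand $u\circ_{+}(v\circ_{+}w)$, $(u\circ_{+}v)\circ_{+}w$ and $(v\circ_{+}u)\circ_{+}w$ and reduce all cross terms to one common form. The paper's form $\frakl(S(u))(v\cdot_V w)$ is your $(u\ast v)\cdot_V w$ by \eqref{eq:pmc}. Two small differences: for the pure $\ast$ terms you use \eqref{eq:ainv} where the paper uses \eqref{eq:bal}, and you actually check \eqref{eq:pmc}--\eqref{eq:pmd} for $\ast$, which the paper leaves to the reader.

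The one step you left hedged closes at once: $u\cdot_V(v\ast w)=u\cdot_V(\frakl(S(v))w)=(\frakr(S(v))u)\cdot_V w=(u\ast v)\cdot_V w$ by \eqref{eq:pmc} and \eqref{eq:bal}. So no fallback is needed.
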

\begin{proof}
	For brevity, we prove only the case $(V, \circ_{+}, \frakl, \frakr)$; the argument for $(V, \circ_{-}, \frakl, \frakr)$ proceeds analogously.
	Let $u, v, w \in V$. Then
	\begin{align*}
		& u \circ_{+} (v \circ_{+} w) = \lambda^2 u \cdot_V (v \cdot_V w) - 2\lambda u \cdot_V (\frakl(S(v))w) - 2\lambda \frakl(S(u))(v \cdot_V w)+ 4 \frakl(S(u))\frakl(S(v))w \\
		&\overset{\hphantom{\;\;}\eqref{eq:pmc}\hphantom{\;\;}}{=} \; \lambda^2 u \cdot_V (v \cdot_V w) - 2\lambda (\frakr(S(v))u) \cdot_V w - 2\lambda \frakl(S(u))(v \cdot_V w)+ 4 \frakl(S(u))\frakl(S(v))w \\
		&\overset{\eqref{eq:bal},\eqref{eq:pmc}}{=} \lambda^2 u \cdot_V (v \cdot_V w) - 4 \lambda \frakl(S(u))(v \cdot_V w)+ 4 \frakl(S(u))\frakl(S(v))w ,\\
		& (u \circ_{+} v) \circ_{+} w = \lambda^2 (u \cdot_V v) \cdot_V w - 2\lambda (\frakl(S(u))v) \cdot_V w - 2\lambda \frakl(S(u \cdot_V v))w + 4 \frakl(S(\frakl(S(u))v))w \\
		& \overset{\eqref{eq:eqvm}, \eqref{eq:pmc}}{=} \lambda^2 (u \cdot_V v) \cdot_V w - 4 \lambda \frakl(S(u)) (v \cdot_V w)  + \frakl(S(\frakl(S(u))v))w \\
		&\overset{\hphantom{\;\;}\eqref{eq:bal}\hphantom{\;\;}}{=} \; \lambda^2 (u \cdot_V v) \cdot_V w - 4 \lambda \frakl(S(u)) (v \cdot_V w) + \frakr(S(w))\frakl(S(u))v \\
		&\overset{\hphantom{\;\;}\eqref{eq:repp}\hphantom{\;\;}}{=} \; \lambda^2 (u \cdot_V v) \cdot_V w - 4 \lambda \frakl(S(u)) (v \cdot_V w) + \frakl(S(u))\frakr(S(w))v \\ 
		&\overset{\hphantom{\;\;}\eqref{eq:bal}\hphantom{\;\;}}{=} \; \lambda^2 (u \cdot_V v) \cdot_V w - 4 \lambda \frakl(S(u)) (v \cdot_V w) + \frakl(S(u))\frakl(S(v))w, \\
		& (v \circ_{+} u) \circ_{+} w = \lambda^2 (v \cdot_V u) \cdot_V w - 2\lambda (\frakl(S(v))u) \cdot_V w - 2\lambda \frakl(S(v \cdot_V u))w + 4 \frakl(S(\frakl(S(v))u))w \\
		& \overset{\eqref{eq:eqvm}, \eqref{eq:bal}}{=} \lambda^2 (v \cdot_V u) \cdot_V w - 4 \lambda (\frakl(S(v))u) \cdot_V w + 4 \frakr(S(w))\frakl(S(v))u \\
		& \overset{\eqref{eq:repp},\eqref{eq:pmc}}{=}  \lambda^2 (v \cdot_V u) \cdot_V w - 4 \lambda (\frakr(S(v))u) \cdot_V w + 4 \frakl(S(v))\frakr(S(w))u \\
		& \overset{\hphantom{\;\;}\eqref{eq:bal}\hphantom{\;\;}}{=} \; \lambda^2 (v \cdot_V u) \cdot_V w - 4 \lambda (\frakl(S(u))v) \cdot_V w + 4 \frakl(S(v))\frakl(S(u))w \\
		& \overset{\eqref{eq:repp},\eqref{eq:pmc}}{=}  \lambda^2 (v \cdot_V u) \cdot_V w - 4 \lambda \frakl(S(u)) (v \cdot_V w ) + 4 \frakl(S(u))\frakl(S(v))w .
	\end{align*}
	Therefore,
	\begin{equation*}
		u \circ_{+} (v \circ_{+} w) = (u \circ_{+} v) \circ_{+} w = (v \circ_{+} u) \circ_{+} w.
	\end{equation*}
	That is, $(V, \circ_{+})$ is a perm algebra.
	One can now readily verify that $(V, \circ_{+}, \frakl, \frakr)$ is an $A$-bimodule perm algebra.
\end{proof}

\begin{theorem}\label{thm:eo2ps}
	Let $(A, \cdot)$ be a perm algebra, $(V, \cdot_V, \frakl, \frakr)$ be an $A$-bimodule perm algebra, and $T, S: V \rightarrow A$ be two linear maps.
	If $S$ is balanced associated to $(V, \frakl, \frakr)$, and $T$ is an extended $\mathcal{O}$-operator of weight $\lambda$ with extension $S$ of mass $(\kappa, \gamma)$ on $(A, \cdot)$ associated to $(V, \cdot_V, \frakl, \frakr)$,
	then $(V, \circ_{T})$ is a perm algebra, where $\circ_T$ is defined by Eq.~\eqref{eq:soo2p}.
\end{theorem}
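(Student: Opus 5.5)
The plan is to reduce the claim to Lemma~\ref{lem:npaeq}. By that lemma, $(V, \circ_T)$ is a perm algebra if and only if Eq.~\eqref{eq:soo} holds. Since $T$ is an extended $\mathcal{O}$-operator, Eq.~\eqref{eq:exto} gives
\begin{equation*}
	\mathcal{A}_T^\lambda(u, v) = \kappa S(u) \cdot S(v) + \gamma S(u \cdot_V v), \;\; \forall u, v \in V.
\end{equation*}
By linearity in $\kappa$ and $\gamma$, it then suffices to verify Eq.~\eqref{eq:soo} separately for the bilinear maps $B_1(u,v) = S(u)\cdot S(v)$ and $B_2(u,v) = S(u\cdot_V v)$. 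That is, for each $i$ we must show that $\frakl(B_i(u,v))w$, $\frakl(B_i(v,u))w$ and $\frakr(B_i(v,w))u$ coincide.

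For $B_1$, I would use Eq.~\eqref{eq:repp}.
\begin{itemize}
	\item We have $\frakl(S(u)\cdot S(v)) = \frakl(S(u))\frakl(S(v)) = \frakl(S(v))\frakl(S(u)) = \frakl(S(v)\cdot S(u))$. This gives the first equality.
	\item For the second equality, write $\frakr(S(v)\cdot S(w))u = \frakl(S(v))\frakr(S(w))u$, again by Eq.~\eqref{eq:repp}.
	\item By balancedness Eq.~\eqref{eq:bal}, $\frakr(S(w))u = \frakl(S(u))w$. Hence the expression equals $\frakl(S(v))\frakl(S(u))w = \frakl(S(u)\cdot S(v))w$.
\end{itemize}

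For $B_2$, I would use Lemma~\ref{lem:baie}.
\begin{itemize}
	\item First, $\frakl(S(u\cdot_V v))w = (\frakl(S(u))v)\cdot_V w$.
	\item Also $\frakl(S(v\cdot_V u))w = (\frakl(S(v))u)\cdot_V w = (\frakr(S(u))v)\cdot_V w$, using Eq.~\eqref{eq:bal}. By Eq.~\eqref{eq:pmc} this equals $(\frakl(S(u))v)\cdot_V w$, which gives the first equality.
	\item For the second equality, $\frakr(S(v\cdot_V w))u = u\cdot_V(\frakr(S(w))v) = u\cdot_V(\frakl(S(v))w)$. By Eq.~\eqref{eq:pmc} this equals $(\frakr(S(v))u)\cdot_V w = (\frakl(S(u))v)\cdot_V w$, using Eq.~\eqref{eq:bal} once more.
\end{itemize}

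The main point of care is the $B_2$ chain. Each rewriting must match exactly the equalities in Eq.~\eqref{eq:pmc} and Eq.~\eqref{eq:pmd}, in particular $(\frakl(a)u)\cdot_V v = (\frakr(a)u)\cdot_V v = u\cdot_V(\frakl(a)v)$. Getting this right is what lets balancedness move the $S$-argument between slots. Once both $B_1$ and $B_2$ satisfy Eq.~\eqref{eq:soo}, so does $\mathcal{A}_T^\lambda$, and Lemma~\ref{lem:npaeq} completes the proof.
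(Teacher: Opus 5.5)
Your proposal is correct and follows essentially the same route as the paper. Both reduce to Eq.~\eqref{eq:soo} via Lemma~\ref{lem:npaeq}, substitute $\mathcal{A}_T^\lambda(u,v)=\kappa S(u)\cdot S(v)+\gamma S(u\cdot_V v)$, handle the $\kappa$-part with Eq.~\eqref{eq:repp} and balancedness, and handle the $\gamma$-part with Lemma~\ref{lem:baie}, balancedness and Eq.~\eqref{eq:pmc}. The only differences are cosmetic: you show all three expressions equal the common value $(\frakl(S(u))v)\cdot_V w$ instead of computing differences, and for one step you use Eq.~\eqref{eq:pmc} where the paper uses Eq.~\eqref{eq:pmd}.
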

\begin{proof}
	By Lemma~\ref{lem:npaeq}, it suffices to show that Eq.~\eqref{eq:soo} holds.
	Let $u, v, w \in V$.
	Since $T$ is an extended $\mathcal{O}$-operator of weight $\lambda$ with extension $S$ of mass $(\kappa, \gamma)$ on $(A, \cdot)$ associated $(V, \cdot_V, \frakl, \frakr)$, we have
	\begin{equation*}
		\mathcal{A}_{T}^{\lambda}(u, v) = \kappa S(u) \cdot S(v) + \gamma S(u \cdot_V v).
	\end{equation*}
	Therefore, we have
	\begin{align*}
		&\frakl\big( \mathcal{A}_{T}^{\lambda}(u, v) \big)w - \frakl\big( \mathcal{A}_{T}^{\lambda}(v, u) \big) w \\
		&\overset{\hphantom{\eqref{eq:repp}}}{=}\kappa\frakl(S(u) \cdot S(v))w + \gamma \frakl(S(u \cdot_V v))w - \kappa\frakl(S(v) \cdot S(u))w - \gamma \frakl(S(v \cdot_V u))w \\
		&\overset{\eqref{eq:repp}}{=} \gamma \frakl(S(u \cdot_V v))w - \gamma \frakl(S(v \cdot_V u))w \overset{\eqref{eq:bal}}{=} \gamma \frakr(S(w)) (u \cdot_V v) - \gamma \frakr(S(w)) (v \cdot_V u) \overset{\eqref{eq:pmd}}{=} 0,
	\end{align*}
	and 
	\begin{align*}
		&\frakl\big( \mathcal{A}_{T}^{\lambda}(u, v) \big)w - \frakr\big( \mathcal{A}_{T}^{\lambda}(v, w) \big) u \\
		&\overset{\;\;\hphantom{\eqref{eq:repp}}\;\;}{=} \; \kappa\frakl(S(u) \cdot S(v))w + \gamma \frakl(S(u \cdot_V v))w - \kappa\frakr(S(v) \cdot S(w))u - \gamma \frakr(S(v \cdot_V w))u \\
		&\overset{\;\;\eqref{eq:repp}\;\;}{=} \; \kappa\frakl(S(u)) \frakl(S(v))w  + \gamma \frakl(S(u \cdot_V v))w  - \kappa\frakr(S(w)) \frakr(S(v))u - \gamma \frakr(S(v \cdot_V w))u \\
		&\overset{\eqref{eq:bal},\eqref{eq:eqvm}}{=} \kappa\frakl(S(u))\frakr(S(w))v + \gamma (\frakl(S(u))v) \cdot_V w - \kappa\frakr(S(w)) \frakl(S(u))v - \gamma \frakl(S(u))(v \cdot_V w) \overset{\eqref{eq:repp},\eqref{eq:pmc}}{=} 0.
	\end{align*}
	That is, Eq.~\eqref{eq:soo} holds.
	The proof is complete.
\end{proof}

\begin{corollary}
	Let $(A, \cdot)$ be a perm algebra, and $T, S: A \rightarrow A$ be two linear maps.
	If $S$ is balanced associated to $(A, L_{\cdot}, R_{\cdot})$, and $T: A \rightarrow A$ is an extended $\mathcal{O}$-operator of weight $\lambda$ with extension $S$ of mass $(\kappa, \gamma)$ associated to $(A, \cdot, L_{\cdot}, R_{\cdot})$,
	then $(A, \circ_T)$ is a perm algebra, where
	$\circ_T: A \otimes A \rightarrow A$ is defined by
	\begin{equation*}
		a \circ_T b  := T(a) \cdot b + a \cdot T(b) + \lambda a \cdot b, \;\; \forall a, b \in A.
	\end{equation*}
\end{corollary}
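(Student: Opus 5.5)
This corollary is the special case of Theorem~\ref{thm:eo2ps} with $V = A$. The plan is to check that all hypotheses of that theorem are met and that the operation $\circ_T$ specializes to the displayed formula.

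First, by the example following the definition of $A$-bimodule perm algebras, $(A, \cdot, L_{\cdot}, R_{\cdot})$ is an $A$-bimodule perm algebra. It plays the role of $(V, \cdot_V, \frakl, \frakr)$, with $\cdot_V = \cdot$, $\frakl = L_{\cdot}$ and $\frakr = R_{\cdot}$. The assumption that $S$ is balanced associated to $(A, L_{\cdot}, R_{\cdot})$ is exactly the balancedness hypothesis required in Theorem~\ref{thm:eo2ps}. Written out via Eq.~\eqref{eq:bal}, it reads $S(a)\cdot b = a\cdot S(b)$. Likewise, the assumption that $T$ is an extended $\mathcal{O}$-operator of weight $\lambda$ with extension $S$ of mass $(\kappa,\gamma)$ associated to $(A, \cdot, L_{\cdot}, R_{\cdot})$ is verbatim the remaining hypothesis of that theorem.

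Applying Theorem~\ref{thm:eo2ps} then shows that $(A, \circ_T)$ is a perm algebra, where $\circ_T$ is given by Eq.~\eqref{eq:soo2p}. It remains to substitute $\frakl = L_{\cdot}$ and $\frakr = R_{\cdot}$ into Eq.~\eqref{eq:soo2p}:
\begin{equation*}
L_{\cdot}(T(a))b + R_{\cdot}(T(b))a + \lambda a\cdot b = T(a)\cdot b + a\cdot T(b) + \lambda a\cdot b,
\end{equation*}
which is the operation in the statement.

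There is no real obstacle here. The only point needing care is confirming that the regular bimodule $(A, \cdot, L_{\cdot}, R_{\cdot})$ really satisfies Eqs.~\eqref{eq:pmc} and \eqref{eq:pmd}, which the paper's example already records.
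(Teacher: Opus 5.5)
Your proposal is correct and is exactly the paper's argument: the paper's proof simply invokes Theorem~\ref{thm:eo2ps} for the $A$-bimodule perm algebra $(A, \cdot, L_{\cdot}, R_{\cdot})$. Your additional checks are accurate: balancedness becomes $S(a)\cdot b = a\cdot S(b)$, and Eq.~\eqref{eq:soo2p} unwinds to the stated $\circ_T$.
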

\begin{proof}
	The conclusion follows from Theorem~\ref{thm:eo2ps}.
\end{proof}

%%%%%%%%%%%%%%%%%%%%%%%%%%%%%%%%%%%%%%%%%%%%%%%%%%%%%%%%%%%%%%%%%%%%%%%%%%%%%%%%
\subsection{\texorpdfstring{$\pi_{\pm}$}{pipm} decomposition}

Let $(A, \cdot)$ be a perm algebra, $(V, \cdot_V, \frakl, \frakr)$ be an $A$-bimodule perm algebra, and $\pi_{\pm}: V \rightarrow A$ be two linear maps.
Set
\begin{equation*}
	T:= \frac{1}{2}(\pi_{+} + \pi_{-}), \;\;
	S := \frac{1}{2}(\pi_{+} - \pi_{-}),
\end{equation*}
called the {\bf symmetrizer} and {\bf antisymmetrizer} of $\pi_{\pm}$ respectively.
Note that $\pi_{\pm}$ can be resolved from $T$ and $S$ by $\pi_{\pm} = T \pm S$.

\begin{proposition}\label{prop:pm2p}
	Let $(A, \cdot)$ be a perm algebra, $(V, \cdot_V, \frakl, \frakr)$ be an $A$-bimodule perm algebra, and $\pi_{\pm}: V \rightarrow A$ be two linear maps.
	Let $T, S$ be the symmetrizer and antisymmetrizer of $\pi_{\pm}$ respectively.
	If $S$ is balanced associated to $(V, \frakl, \frakr)$, then $(V, \circ)$ is a perm algebra, where $\circ: V \otimes V \rightarrow V$ is defined by
	\begin{equation*}
		u \circ v := \frakl(\pi_{+}(u))v + \frakr(\pi_{-}(v))u + \lambda u \cdot_V v, \;\; \forall u, v \in V,
	\end{equation*}
	if and only if Eq.~\eqref{eq:soo} holds.
	If in addition $T$ is an $\mathcal{O}$-operator of weight $\lambda$ on $(A, \cdot)$ associated to $(V, \cdot_V, \frakl, \frakr)$, then $(V, \circ)$ is a perm algebra and $T$ is a homomorphism of perm algebras between $(V, \circ)$ and $(A, \cdot)$.
\end{proposition}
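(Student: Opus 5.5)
The plan is to reduce the statement to Lemma~\ref{lem:npaeq} by showing that, under the balancedness of $S$, the operation $\circ$ coincides with $\circ_T$ from Eq.~\eqref{eq:soo2p}. Since $\pi_{+} = T + S$ and $\pi_{-} = T - S$, for all $u, v \in V$ we have
\begin{equation*}
	u \circ v = \frakl(T(u))v + \frakr(T(v))u + \lambda u \cdot_V v + \big(\frakl(S(u))v - \frakr(S(v))u\big).
\end{equation*}
The bracketed term vanishes by Eq.~\eqref{eq:bal}. Hence $u \circ v = u \circ_T v$ identically, and $(V, \circ) = (V, \circ_T)$ as algebras.

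With this identification, the first assertion follows immediately from Lemma~\ref{lem:npaeq}. That lemma states that $(V, \circ_T)$ is a perm algebra if and only if Eq.~\eqref{eq:soo} holds for $\mathcal{A}_T^{\lambda}$. Here $T$ is the symmetrizer, so the condition is read with $T = \frac{1}{2}(\pi_+ + \pi_-)$.

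For the second assertion, assume further that $T$ is an $\mathcal{O}$-operator of weight $\lambda$. Then $\mathcal{A}_T^{\lambda}(u, v) = 0$ for all $u, v$, so Eq.~\eqref{eq:soo} holds trivially with all three terms equal to zero. Thus $(V, \circ)$ is a perm algebra. Alternatively, one can cite Theorem~\ref{thm:o2pp} directly, which gives that $(V, \circ_T)$ is a perm algebra and that $T$ is a homomorphism onto $(A, \cdot)$. The homomorphism property is simply the identity $T(u \circ v) = T(u \circ_T v) = T(u) \cdot T(v)$, which is the $\mathcal{O}$-operator equation.

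There is no real obstacle here. The only point needing care is noticing that balancedness makes the $S$-contributions cancel exactly. This cancellation is what allows the general (non-symmetric) operation built from $\pi_{\pm}$ to collapse to $\circ_T$.
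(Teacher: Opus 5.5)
Your proposal is correct and follows essentially the same route as the paper. Balancedness of $S$ (Eq.~\eqref{eq:bal}) cancels the $S$-terms, so $\circ$ coincides with $\circ_T$, and Lemma~\ref{lem:npaeq} gives the equivalence. The $\mathcal{O}$-operator condition $\mathcal{A}_T^{\lambda}=0$ then yields both the perm algebra structure and $T(u\circ v)=T(u)\cdot T(v)$.
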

\begin{proof}
	Since $S$ is balanced associated to $(V, \frakl, \frakr)$, we have
	\begin{align*}
		u \circ v  &\overset{\hphantom{\eqref{eq:bal}}}{=} \frakl(\pi_{+}(u))v + \frakr(\pi_{-}(v))u + \lambda u \cdot_V v = \frakl((T+S)(u))v + \frakr((T-S)(v))u + \lambda u \cdot_V v \\
		&\overset{\eqref{eq:bal}}{=} \frakl(T(u))v + \frakr(T(v))u + \lambda u \cdot_V v,
	\end{align*}
	for all $u, v \in V$.
	Therefore, by Lemma~\ref{lem:npaeq}, $(V, \circ)$ is a perm algebra if and only if Eq.~\eqref{eq:soo} holds.
	Furthermore, if in addition $T$ is an $\mathcal{O}$-operator of weight $\lambda$ on $(A, \cdot)$ associated to $(V, \cdot_V, \frakl, \frakr)$, then $\mathcal{A}_{T}^{\lambda}(u, v) = 0$ for all $u, v \in V$.
	Hence, $(V, \circ)$ is a perm algebra.
	Since 
	\begin{equation*}
		T(u \circ v) = T(\frakl(T(u))v + \frakr(T(v))u + \lambda u \cdot_V v) = T(u) \cdot T(v), \;\; \forall u, v \in V,
	\end{equation*}
	$T$ is a homomorphism of perm algebras from $(V, \circ)$ to $(A, \cdot)$.
	The proof is complete.
\end{proof}

The following results provide equivalent characterizations of extended $\mathcal{O}$-operators under certain conditions via the $\pi_{\pm}$ decomposition.

\begin{proposition}\label{prop:iphr}
	Let $(A, \cdot)$ be a perm algebra, $(V, \cdot_V, \frakl, \frakr)$ be an $A$-bimodule perm algebra, and $\pi_{\pm}: V \rightarrow A$ be two linear maps.
	Let $T, S$ be the symmetrizer and antisymmetrizer of $\pi_{\pm}$ respectively.
	Suppose that $S$ is an $A$-bimodule homomorphism from $(V, \frakl, \frakr)$ to $(A, L_{\cdot}, R_{\cdot})$.
	Then $T$ is an extended $\mathcal{O}$-operator of weight $\lambda$ with extension $S$ of mass $(-1, \lambda)$ (resp. $(-1, -\lambda)$) on $(A, \cdot)$ associated $(V, \cdot_V, \frakl, \frakr)$ if and only if
	\begin{equation}
		\pi_{+}(u) \cdot \pi_{+}(v)  = \pi_{+}(u \circ_T v) \;\; \text{(resp. $\pi_{-}(u) \cdot \pi_{-}(v) = \pi_{-}(u \circ_T v)$)}, \;\; \forall u, v \in V, \label{eq:piha}
	\end{equation}
	where $\circ_T$ is defined by Eq.~\eqref{eq:soo2p}.
\end{proposition}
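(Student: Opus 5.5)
Expand both sides of Eq.~\eqref{eq:piha} with $\pi_{\pm} = T \pm S$ and compare with Eq.~\eqref{eq:exto}; the bimodule homomorphism property of $S$ does the rest. I treat the case $\pi_{+}$; the case $\pi_{-}$ is identical after replacing $S$ by $-S$, which changes $\gamma$ from $\lambda$ to $-\lambda$.

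For $u, v \in V$, first expand the left-hand side:
\begin{equation*}
	\pi_{+}(u) \cdot \pi_{+}(v) = T(u)\cdot T(v) + T(u)\cdot S(v) + S(u)\cdot T(v) + S(u)\cdot S(v).
\end{equation*}
Then expand the right-hand side:
\begin{equation*}
	\pi_{+}(u \circ_T v) = T(u\circ_T v) + S\big(\frakl(T(u))v\big) + S\big(\frakr(T(v))u\big) + \lambda S(u \cdot_V v).
\end{equation*}
By Eq.~\eqref{eq:ainv}, $S(\frakl(T(u))v) = T(u)\cdot S(v)$ and $S(\frakr(T(v))u) = S(u)\cdot T(v)$. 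These two terms cancel the mixed terms on the left-hand side.

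Consequently,
\begin{equation*}
	\pi_{+}(u) \cdot \pi_{+}(v) - \pi_{+}(u \circ_T v) = \mathcal{A}_T^{\lambda}(u,v) + S(u)\cdot S(v) - \lambda S(u\cdot_V v).
\end{equation*}
The left-hand side vanishes exactly when $\mathcal{A}_T^{\lambda}(u,v) = -S(u)\cdot S(v) + \lambda S(u\cdot_V v)$. This is Eq.~\eqref{eq:exto} with $(\kappa,\gamma)=(-1,\lambda)$, which proves the equivalence.

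For $\pi_{-}$ the mixed terms carry the sign $-1$ on both sides and still cancel. The remaining terms are $+S(u)\cdot S(v)$ on the left and $-\lambda S(u\cdot_V v)$ on the right, which gives mass $(-1,-\lambda)$.

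There is no real obstacle. The only step needing care is matching the mixed terms, which uses both equations in Eq.~\eqref{eq:ainv} and requires the left and right actions to land on the correct side of $S(v)$ and $S(u)$. Balancedness of $S$ is not needed.
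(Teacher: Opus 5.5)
Your proposal is correct and follows essentially the same route as the paper: expand $\pi_{+}=T+S$, use the two identities in Eq.~\eqref{eq:ainv} to cancel the mixed terms $T(u)\cdot S(v)+S(u)\cdot T(v)$ against $S(\frakl(T(u))v)+S(\frakr(T(v))u)$, and read off Eq.~\eqref{eq:exto} with mass $(-1,\lambda)$. Your sign check for the $\pi_{-}$ case, including the observation that replacing $S$ by $-S$ flips $\gamma$, is also right.
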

\begin{proof}
	We prove only the mass $(-1, \lambda)$ case, and the mass  $(-1, -\lambda)$ case is analogous.
	\begin{align*}
		 & \pi_{+}(u) \cdot \pi_{+}(v) - \pi_{+}(u \circ_T v) = (T+S)(u) \cdot (T+S)(v) - (T+S)(\frakl(T(u))v + \frakr(T(v))u + \lambda u \cdot_V v) \\
		 & \overset{\hphantom{\eqref{eq:ainv}}}{=} T(u) \cdot T(v) + S(u) \cdot S(v) - T(\frakl(T(u))v + \frakr(T(v))u + \lambda u \cdot_V v)                                             \\
		 & \quad + T(u) \cdot S(v) + S(u) \cdot T(v) - S(\frakl(T(u))v + \frakr(T(v))u + \lambda u \cdot_V v)                                       \\
		 & \overset{\eqref{eq:ainv}}{=} T(u) \cdot T(v) + S(u) \cdot S(v) - T(\frakl(T(u))v + \frakr(T(v))u + \lambda u \cdot_V v)                                              \\
		 & \quad + S(\frakl(T(u))v) +  S(\frakr(T(v))u) - S(\frakl(T(u))v + \frakr(T(v))u + \lambda u \cdot_V v)                                  \\
		 & \overset{\hphantom{\eqref{eq:ainv}}}{=} T(u) \cdot T(v) + S(u) \cdot S(v) - T(\frakl(T(u))v + \frakr(T(v))u + \lambda u \cdot_V v) - \lambda S(u \cdot_V v).
	\end{align*}
	Hence, $T$ is an extended $\mathcal{O}$-operator of weight $\lambda$ with extension $S$ of mass $(-1, \lambda)$ on $(A, \cdot)$ associated to $(V, \cdot_V, \frakl, \frakr)$ if and only if Eq.~\eqref{eq:piha} hold.
\end{proof}

\begin{remark}
	$(V, \circ_{T})$ may not be a perm algebra in Proposition~\ref{prop:iphr}.
	However, by Theorem~\ref{thm:eo2ps}, if $S$ is balanced associated to $(V, \frakl, \frakr)$,
	then $(V, \circ_{T})$ is indeed a perm algebra under the conditions of Proposition~\ref{prop:iphr}.
\end{remark}

\begin{theorem}\label{thm:opm}
	Let $(A, \cdot)$ be a perm algebra, $(V, \cdot_V, \frakl, \frakr)$ be an $A$-bimodule perm algebra, and $\pi_{\pm}: V \rightarrow A$ be two linear maps. 
	Let $T, S$ be the symmetrizer and antisymmetrizer of $\pi_{\pm}$ respectively.
	Suppose that $S$ is a balanced $A$-bimodule homomorphism from $(V, \frakl, \frakr)$ to $(A, L_{\cdot}, R_{\cdot})$.
	Then the following conditions are equivalent.
	\begin{enumerate}
		\item\label{it:oeoheo} 
			$T$ is an extended $\mathcal{O}$-operator of weight $\lambda$ with extension $S$ of mass $(-1, \lambda)$ (resp. $(-1, -\lambda)$) on $(A, \cdot)$ associated to $(V, \cdot_V, \frakl, \frakr)$.
		\item\label{it:oeohh}  
			$(V, \circ_{T})$ is a perm algebra, and $\pi_{+}$ (resp. $\pi_{-}$) is a homomorphism of perm algebras between $(V, \circ_{T})$ and $(A, \cdot)$,
			where $\circ_T$ is defined by Eq.~\eqref{eq:soo2p}.
			
		\item\label{it:oeoho}   
			$\pi_{+}$ (resp. $\pi_{-}$) is an $\mathcal{O}$-operator of weight 1 on $(A, \cdot)$ associated to $(V$, $\circ_{+}$, $\frakl$, $\frakr)$ (resp. $(V$, $\circ_{-}$, $\frakl$, $\frakr)$)
			where $(V$, $\circ_{+}$, $\frakl$, $\frakr)$ (resp. $(V$, $\circ_{-}$, $\frakl$, $\frakr)$) is the $A$-bimodule perm algebra obtained in Proposition~\ref{prop:bae2amp}.
	\end{enumerate}
\end{theorem}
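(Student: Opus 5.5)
We prove only the mass $(-1,\lambda)$ case with $\pi_{+}$ and $\circ_{+}$; the $(-1,-\lambda)$ case with $\pi_{-}$ and $\circ_{-}$ is identical after the sign changes $S\mapsto -S$ and $\gamma\mapsto-\gamma$. The plan is to show \eqref{it:oeoheo}$\Leftrightarrow$\eqref{it:oeohh} using Theorem~\ref{thm:eo2ps} and Proposition~\ref{prop:iphr}, and \eqref{it:oeohh}$\Leftrightarrow$\eqref{it:oeoho} by rewriting $\circ_T$ in terms of $\pi_{+}$.

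For \eqref{it:oeoheo}$\Rightarrow$\eqref{it:oeohh}: since $S$ is balanced, Theorem~\ref{thm:eo2ps} shows that $(V,\circ_T)$ is a perm algebra. Since $S$ is also an $A$-bimodule homomorphism, Proposition~\ref{prop:iphr} gives $\pi_{+}(u)\cdot\pi_{+}(v)=\pi_{+}(u\circ_T v)$. Hence $\pi_{+}$ is a homomorphism of perm algebras. Conversely, \eqref{it:oeohh}$\Rightarrow$\eqref{it:oeoheo} is immediate from the homomorphism identity together with Proposition~\ref{prop:iphr}; the perm-algebra hypothesis is not even needed for this direction.

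For \eqref{it:oeohh}$\Leftrightarrow$\eqref{it:oeoho}, the key computation is an identity between the two products. Using $T=\pi_{+}-S$ and balancedness \eqref{eq:bal}, $\frakr(S(v))u=\frakl(S(u))v$, we get
\begin{equation*}
u\circ_T v=\frakl(\pi_{+}(u))v+\frakr(\pi_{+}(v))u-2\frakl(S(u))v+\lambda u\cdot_V v
=\frakl(\pi_{+}(u))v+\frakr(\pi_{+}(v))u+u\circ_{+}v .
\end{equation*}
Thus $\circ_T$ coincides with the operation $\circ_{\pi_{+}}$ of Eq.~\eqref{eq:soo2p}, built from $\pi_{+}$ with weight $1$ relative to the $A$-bimodule perm algebra $(V,\circ_{+},\frakl,\frakr)$ of Proposition~\ref{prop:bae2amp}. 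Consequently, the identity $\pi_{+}(u)\cdot\pi_{+}(v)=\pi_{+}(u\circ_T v)$ is exactly the $\mathcal{O}$-operator equation of weight $1$ for $\pi_{+}$ associated to $(V,\circ_{+},\frakl,\frakr)$. This gives \eqref{it:oeohh}$\Rightarrow$\eqref{it:oeoho}.

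For \eqref{it:oeoho}$\Rightarrow$\eqref{it:oeohh}, we apply Theorem~\ref{thm:o2pp} to $\pi_{+}$ and $(V,\circ_{+},\frakl,\frakr)$, which is legitimate since Proposition~\ref{prop:bae2amp} makes it an $A$-bimodule perm algebra. The theorem shows that $(V,\circ_{\pi_{+}})=(V,\circ_T)$ is a perm algebra and that $\pi_{+}$ is a homomorphism.

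The only delicate point is the sign bookkeeping in the identity relating $\circ_T$ and $\circ_{+}$. One must check that the coefficient $-2$ in $\circ_{+}$ is exactly what the balanced condition produces: $\frakl(-S(u))v+\frakr(-S(v))u=-2\frakl(S(u))v$. For $\pi_{-}=T-S$ the same computation gives $+2\frakl(S(u))v$, which matches $\circ_{-}$. The remaining steps follow directly from the earlier results.
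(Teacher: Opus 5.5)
Your proposal is correct and follows the paper's proof essentially step for step. You get (1)$\Leftrightarrow$(2) from Theorem~\ref{thm:eo2ps} and Proposition~\ref{prop:iphr}, and (2)$\Leftrightarrow$(3) from the identity $u\circ_T v=\frakl(\pi_{+}(u))v+\frakr(\pi_{+}(v))u+u\circ_{+}v$ together with Theorem~\ref{thm:o2pp}; your reduction of the $(-1,-\lambda)$ case by interchanging $\pi_{+}$ and $\pi_{-}$ (which replaces $S$ by $-S$ and $\circ_{+}$ by $\circ_{-}$) is a tidy way to make the paper's ``analogous'' precise.
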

\begin{proof}
	We prove only the $(-1, \lambda)$ case; the $(-1, -\lambda)$ case is analogous.
	
	(\ref{it:oeoheo}) $\Longrightarrow$ (\ref{it:oeohh}).
	By Theorem~\ref{thm:eo2ps}, $(V, \circ_{T})$ is a perm algebra. 
	Then Proposition~\ref{prop:iphr} shows that $\pi_{+}$ is a homomorphism of perm algebras.
	
	(\ref{it:oeohh}) $\Longrightarrow$ (\ref{it:oeoheo}).  
	It follows from Proposition~\ref{prop:iphr}.
	
	(\ref{it:oeohh}) $\Longrightarrow$ (\ref{it:oeoho}).
	For all $u, v \in V$, we have
	\begin{align*}
		&\frakl(\pi_{+}(u))v + \frakr(\pi_{+}(v))u + u \circ_{+} v =\frakl(\pi_{+}(u))v + \frakr(\pi_{+}(v))u + \lambda u \cdot_V v - 2 \frakl(S(u))v \\
		&=\frakl(T(u))v + \frakr(T(v))u + \lambda u \cdot_V v + \frakl(S(u))v + \frakr(S(v))u - 2 \frakl(S(u))v \\
		&=\frakl(T(u))v + \frakr(T(v))u + \lambda u \cdot_V v = u \circ_T v.
	\end{align*}
	Therefore, 
	\begin{equation*}
		\pi_{+}(u) \cdot \pi_{+}(v) - \pi_{+}\big(\frakl(\pi_{+}(u))v + \frakr(\pi_{+}(v))u + u \circ_{+} v \big) = \pi_{+}(u) \cdot \pi_{+}(v) - \pi_{+}(u \circ_{T} v) = 0.
	\end{equation*}
	That is, $\pi_{+}$ is an $\mathcal{O}$-operator of weight 1 on $(A, \cdot)$ associated to $(V, \circ_{+}, \frakl, \frakr)$.
	
	(\ref{it:oeoho}) $\Longrightarrow$ (\ref{it:oeohh}).
	Applying Theorem~\ref{thm:o2pp} to $\pi_{+}$, we obtain that $\pi_{+}$ is a homomorphism of perm algebras from $(V, \circ_{\pi_{+}})$ to $(A, \cdot)$, where $\circ_{\pi_{+}}: V \otimes V \to V$ is defined by
	\begin{equation*}
		u \circ_{\pi_{+}} v  := \frakl(\pi_{+}(u))v + \frakr(\pi_{+}(v))u + u \circ_{+} v.
	\end{equation*}
	On the other hand, following the proof of the implication ``(\ref{it:oeohh}) $\Longrightarrow$ (\ref{it:oeoho})'', we obtain 
	\begin{equation*}
		u \circ_{\pi_{+}} v = u \circ_T v, \;\;
		\forall u, v \in V.
	\end{equation*}
	Hence, $\pi_{+}$ is a homomorphism of perm algebras between $(V, \circ_{T})$ and $(A, \cdot)$.
\end{proof}

\begin{corollary}\label{coro:bah2e}
	Let $(A, \cdot)$ be a perm algebra, $(V, \frakl, \frakr)$ be a bimodule of $(A, \cdot)$, and $S: V \rightarrow A$ be a balanced $A$-bimodule homomorphism from $(V, \frakl, \frakr)$ to $(A, L_{\cdot}, R_{\cdot})$.
	Then $(V, \circ_{+}, \frakl, \frakr)$ (resp. $(V, \circ_{-}, \frakl, \frakr)$) is an $A$-bimodule perm algebra, where $\circ_{+}$ (resp. $\circ_{-}$) is defined by
	\begin{align*}
		u \circ_{+} v = - 2 \frakl(S(u))v \quad \text{(resp. $u \circ_{-} v = 2 \frakl(S(u))v$)}, \;\; \forall u, v \in V.
	\end{align*}
	Furthermore, let $T: V \to A$ be a linear map, then the following conditions are equivalent.
	\begin{enumerate}
		\item 
			$T: V \rightarrow A$ is an extended $\mathcal{O}$-operator of weight $0$ with extension $S$ of mass $(-1, 0)$ on $(A, \cdot)$ associated to $(V, \frakl, \frakr)$.
		
		\item 
			$(V, \circ_{T})$ is a perm algebra, and $T+S$ (resp. $T-S$) is a homomorphism of perm algebras between $(V, \circ_{T})$ and $(A, \cdot)$,
			where $\circ_T$ is defined by 
			\begin{equation*}
				u \circ_T v := \frakl(T(u))v + \frakr(T(v))u, \;\; \forall u, v \in V. 
			\end{equation*}
			
		\item 
			$T+S$ (resp. $T-S$) is an $\mathcal{O}$-operator of weight 1 on $(A, \cdot)$ associated to $(V$, $\circ_{+}$, $\frakl$, $\frakr)$ (resp. $(V$, $\circ_{-}$, $\frakl$, $\frakr)$).
	\end{enumerate}
\end{corollary}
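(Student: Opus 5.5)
This corollary is the specialization of Proposition~\ref{prop:bae2amp} and Theorem~\ref{thm:opm} to a plain bimodule, viewed as an $A$-bimodule perm algebra with trivial multiplication, and taking $\lambda = 0$. The plan is to reduce everything to those two results rather than recompute. Following the convention of the earlier example, regard $(V, \frakl, \frakr)$ as the $A$-bimodule perm algebra $(V, \cdot_V, \frakl, \frakr)$ with $u \cdot_V v = 0$. Then Eqs.~\eqref{eq:pmc} and \eqref{eq:pmd} hold trivially.

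For the first assertion, apply Proposition~\ref{prop:bae2amp} to this trivial bimodule perm algebra. Since $\cdot_V = 0$, the term $\lambda u \cdot_V v$ vanishes for every $\lambda$. Hence $\circ_{\pm}$ there becomes exactly $u \circ_{+} v = -2\frakl(S(u))v$ (resp. $u\circ_- v = 2\frakl(S(u))v$). The hypothesis that $S$ is a balanced $A$-bimodule homomorphism is exactly what that proposition requires, so $(V, \circ_\pm, \frakl, \frakr)$ is an $A$-bimodule perm algebra.

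For the equivalences, set $\pi_\pm := T \pm S$, so that $T$ and $S$ are the symmetrizer and antisymmetrizer of $\pi_\pm$. Apply Theorem~\ref{thm:opm} with $\lambda = 0$ and trivial $\cdot_V$. Three specializations then need to be recorded.
\begin{enumerate}
	\item The mass $(-1, \pm\lambda)$ becomes $(-1, 0)$ in both the ``$+$'' and ``$-$'' readings. Since $S(u\cdot_V v)=0$ anyway, the $\gamma$-term is absent, so condition (1) of the corollary is the same for both signs.
	\item The product $\circ_T$ from Eq.~\eqref{eq:soo2p} reduces to $\frakl(T(u))v + \frakr(T(v))u$.
	\item The bimodule perm algebras $(V,\circ_\pm,\frakl,\frakr)$ appearing in Theorem~\ref{thm:opm}(3) are precisely those of the first part.
\end{enumerate}
With these in hand, items (1)--(3) of Theorem~\ref{thm:opm} translate verbatim into items (1)--(3) of the corollary, for each choice of sign.

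There is no real obstacle here; the only step needing care is the sign bookkeeping. Because the mass $(-1,0)$ is shared, condition (1) implies both the $T+S$ and the $T-S$ versions of (2) and (3), and the ``resp.'' reading must be checked to be consistent with this. One should also confirm that Theorem~\ref{thm:opm} places no restriction on $\lambda$ that would exclude $\lambda=0$; it does not, since its proof only uses Theorem~\ref{thm:eo2ps}, Proposition~\ref{prop:iphr} and Theorem~\ref{thm:o2pp}, all valid for arbitrary $\lambda$.
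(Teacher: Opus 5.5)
Your proposal is correct and follows the paper's own proof: the paper likewise regards $(V,\frakl,\frakr)$ as an $A$-bimodule perm algebra with trivial multiplication and reads the corollary off from Proposition~\ref{prop:bae2amp} and Theorem~\ref{thm:opm}. Your extra bookkeeping is accurate and is left implicit in the paper: once $\cdot_V=0$, the weight term and the $\gamma$-term vanish, so condition (1) is the same for both signs.
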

\begin{proof}
	Regarding $(V, \frakl, \frakr)$ as an $A$-bimodule perm algebra with trivial perm algebra structure, the conclusions follow from Proposition~\ref{prop:bae2amp} and Theorem~\ref{thm:opm}.
\end{proof}

\begin{corollary}\label{coro:eoqadj}
	Let $(A, \cdot)$ be a perm algebra and $S: A \rightarrow A$ be a balanced $A$-bimodule homomorphism from $(A, L_{\cdot}, R_{\cdot})$ to $(A, L_{\cdot}, R_{\cdot})$.
	Then $(A, \circ_{+}, L_{\cdot}, R_{\cdot})$ (resp. $(A, \circ_{-}, L_{\cdot}, R_{\cdot})$) is an $A$-bimodule perm algebra, where $\circ_{+}$ (resp. $\circ_{-}$) is defined by
	\begin{equation*}
		a \circ_{+} b = \lambda a \cdot b - 2 S(a) \cdot b \quad \text{(resp. $a \circ_{-} b = \lambda a \cdot b + 2 S(a) \cdot b$)}, \;\; \forall a, b \in A.
	\end{equation*}
	Furthermore, let $T: V \to A$ be a linear map, then the following conditions are equivalent.
	\begin{enumerate}
		\item 
			$T$ is an extended $\mathcal{O}$-operator of weight $\lambda$ with extension $S$ of mass $(-1, \lambda)$ (resp. $(-1, -\lambda)$) on $(A, \cdot)$ associated to $(A, \cdot, L_{\cdot}, R_{\cdot})$.
		
		\item 
			$(A, \circ_{T})$ is a perm algebra, and $T+S$ (resp. $T-S$) is a homomorphism of perm algebras from $(A, \circ_{T})$ to $(A, \cdot)$,
			where $\circ_T$ is defined by
			\begin{equation*}
				a \circ_T b := T(a) \cdot b + a \cdot T(b) + \lambda a \cdot b, \;\; \forall a, b \in A. 
			\end{equation*}
			
		\item 
			$T + S$ (resp. $T - S$) is an $\mathcal{O}$-operator of weight 1 on $(A, \cdot)$ associated to $(A$, $\circ_{+}$, $L_{\cdot}$, $R_{\cdot})$ (resp. $(A$, $\circ_{-}$, $L_{\cdot}$, $R_{\cdot})$).
	\end{enumerate}
\end{corollary}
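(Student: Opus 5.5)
This corollary is the specialization of Proposition~\ref{prop:bae2amp} and Theorem~\ref{thm:opm} to the regular $A$-bimodule perm algebra $(V, \cdot_V, \frakl, \frakr) = (A, \cdot, L_{\cdot}, R_{\cdot})$. The plan is to check that the hypotheses of those two results hold in this setting and then read off the conclusions. We take $T \colon A \to A$; the ``$V$'' in the statement is understood as $A$.

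First, by the Example listing the regular bimodule perm algebra, $(A, \cdot, L_{\cdot}, R_{\cdot})$ is an $A$-bimodule perm algebra. By hypothesis, $S$ is a balanced $A$-bimodule homomorphism from $(A, L_{\cdot}, R_{\cdot})$ to itself. Proposition~\ref{prop:bae2amp} applies directly. Substituting $\frakl = L_{\cdot}$ and $\cdot_V = \cdot$ into its formulas gives
\begin{equation*}
u \circ_{\pm} v = \lambda u \cdot v \mp 2 L_{\cdot}(S(u)) v = \lambda u \cdot v \mp 2 S(u) \cdot v,
\end{equation*}
which are exactly the operations in the statement. Hence $(A, \circ_{\pm}, L_{\cdot}, R_{\cdot})$ are $A$-bimodule perm algebras.

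Next, for the equivalences, set $\pi_{\pm} := T \pm S$. Then $T$ and $S$ are precisely the symmetrizer and antisymmetrizer of $\pi_{\pm}$, since $\frac12(\pi_+ + \pi_-) = T$ and $\frac12(\pi_+ - \pi_-) = S$. With this choice, Theorem~\ref{thm:opm} applies verbatim to the regular bimodule perm algebra. Its operation $\circ_T$ from Eq.~\eqref{eq:soo2p} becomes
\begin{equation*}
a \circ_T b = L_{\cdot}(T(a)) b + R_{\cdot}(T(b)) a + \lambda a \cdot b = T(a) \cdot b + a \cdot T(b) + \lambda a \cdot b.
\end{equation*}
Its three conditions translate directly into items (1), (2) and (3) of the corollary, with the upper sign giving mass $(-1, \lambda)$ and the lower sign giving mass $(-1, -\lambda)$.

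There is no genuine obstacle here. The only point requiring care is bookkeeping: the signs in $\circ_{\pm}$ must match the correct mass case, with $\pi_+$ paired with mass $(-1, \lambda)$ and $\circ_+$. This pairing is inherited directly from Theorem~\ref{thm:opm}.
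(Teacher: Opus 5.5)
Your proposal is correct and takes the same approach as the paper, whose proof simply specializes Proposition~\ref{prop:bae2amp} and Theorem~\ref{thm:opm} to the regular $A$-bimodule perm algebra $(A,\cdot,L_{\cdot},R_{\cdot})$ with $\pi_{\pm}=T\pm S$. You are also right to read $T\colon V\to A$ in the statement as $T\colon A\to A$.
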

\begin{proof}
	The conclusions follow from Proposition~\ref{prop:bae2amp} and Theorem~\ref{thm:opm}.
\end{proof}

\begin{corollary}
	Let $(A, \cdot)$ be a perm algebra.
	Then $T: A \rightarrow A$ satisfies the following equality
	\begin{equation}
		T(a) \cdot T(b) - T(T(a) \cdot b + a \cdot T(b) + \lambda a \cdot b) = k a \cdot b, \;\; \forall a, b \in A. \label{eq:bpaa}
	\end{equation}
	where $k \in \mathbf{k}$ if and only if $T + \id$ (resp. $T - \id$) is a Rota-Baxter operator of weight $k - 1$ (resp. $-(k-1)$) on $(A, \cdot)$.
\end{corollary}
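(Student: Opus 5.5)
The plan is to specialize Corollary~\ref{coro:eoqadj} to $S=\id$, which the Example in Section~\ref{sec:extended-o} identifies as a balanced $A$-bimodule homomorphism from $(A, L_{\cdot}, R_{\cdot})$ to itself. With $S=\id$, the right-hand side of Eq.~\eqref{eq:exto} for mass $(-1,\lambda)$ becomes $-a\cdot b+\lambda a\cdot b=(\lambda-1)a\cdot b$. For mass $(-1,-\lambda)$ it becomes $-(1+\lambda)a\cdot b$. So Eq.~\eqref{eq:bpaa} says precisely that $T$ is an extended $\mathcal{O}$-operator with extension $\id$ of mass $(-1,\lambda)$ when $k=\lambda-1$, and of mass $(-1,-\lambda)$ when $k=-(\lambda+1)$.

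These are the relations between $k$ and $\lambda$ under which I read the statement. Nothing else can hold: a direct expansion of $P=T+\id$ shows the weight $\mu$ is forced to be $\lambda-2$, and then $k$ is forced to be $\lambda-1$. I would state this explicitly at the start.

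Next, apply the equivalence (1)$\Leftrightarrow$(3) of Corollary~\ref{coro:eoqadj}. It gives that $T+\id$ (resp. $T-\id$) is an $\mathcal{O}$-operator of weight $1$ on $(A,\cdot)$ associated to $(A,\circ_{\pm},L_\cdot,R_\cdot)$. Here $a\circ_{+}b=(\lambda-2)a\cdot b$ and $a\circ_{-}b=(\lambda+2)a\cdot b$. By Remark~\ref{rmk:ool}, an $\mathcal{O}$-operator of weight $1$ associated to $(A,\mu\,\cdot,L_\cdot,R_\cdot)$ is the same as a Rota-Baxter operator of weight $\mu$ on $(A,\cdot)$. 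Substituting $\lambda=k+1$ gives weight $\lambda-2=k-1$ for $T+\id$. Substituting $\lambda=-(k+1)$ gives weight $\lambda+2=-(k-1)$ for $T-\id$. This is exactly the claim.

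As a self-contained check, and as an alternative proof avoiding the corollary, expand $P(a)\cdot P(b)-P\big(P(a)\cdot b+a\cdot P(b)+\mu\, a\cdot b\big)$ for $P=T\pm\id$. For $P=T+\id$ this equals
\begin{equation*}
T(a)\cdot T(b)-T\big(T(a)\cdot b+a\cdot T(b)+(\mu+2)a\cdot b\big)-(\mu+1)a\cdot b ,
\end{equation*}
and the computation for $T-\id$ is analogous with sign changes. Matching this with Eq.~\eqref{eq:bpaa} gives the weight.

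The only delicate point is this bookkeeping of how $k$, $\lambda$ and the Rota-Baxter weight are tied together, so that both directions of the equivalence hold with the stated weights. The algebra itself is routine.
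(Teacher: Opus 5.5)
Your proposal is correct and is essentially the paper's proof: the paper also reads Eq.~\eqref{eq:bpaa} as saying that $T$ is an extended $\mathcal{O}$-operator with extension $S=\id$ of mass $(-1,k+1)$, applies Corollary~\ref{coro:eoqadj} to get $a\circ_{\pm}b=\pm(k-1)\,a\cdot b$, and then concludes with Remark~\ref{rmk:ool}. That application of Corollary~\ref{coro:eoqadj} silently requires your relations $\lambda=k+1$ (resp.\ $\lambda=-(k+1)$), so stating them explicitly and adding the direct expansion of $T\pm\id$ makes precise what the paper leaves implicit.
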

\begin{proof}
	By Corollary~\ref{coro:eoqadj}, $T$ is an extended $\mathcal{O}$-operator of weight $\lambda$ with extension $S = \id$ of mass $(-1, k+1)$ on $(A, \cdot)$ associated to $(A, \cdot, L_{\cdot}, R_{\cdot})$ if and only if $T + \id$ (resp. $T - \id$) is an $\mathcal{O}$-operator of weight 1 on $(A, \cdot)$ associated to $(A, \circ_{+}, L_{\cdot}, R_{\cdot})$ (resp. $(A, \circ_{-}, L_{\cdot}, R_{\cdot})$), where
	\begin{equation*}
		a \circ_{+} b = (k - 1) a \cdot b \quad\text{(resp. $a \circ_{+} b = -(k - 1) a \cdot b$)}, \;\;
		\forall a, b \in A.
	\end{equation*}
	By Remark~\ref{rmk:ool}, the latter is equivalent to the condition that $T + \id$ (resp. $T - \id$) is an $\mathcal{O}$-operator of weight $k - 1$ (resp. $-(k-1)$) on $(A, \cdot)$ associated to $(A, \cdot, L_{\cdot}, R_{\cdot})$, which completes the proof.
\end{proof}

\begin{remark}
	If $T$ satisfies Eq.~\eqref{eq:bpaa}, then it naturally yields post-perm algebra structures on $A$.
	In fact, $T \pm \id$ is an $\mathcal{O}$-operator of weight $\pm (k-1)$ on $(A, \cdot)$ associated to $(A, \cdot, L_{\cdot}, R_{\cdot})$, by Theorem~\ref{thm:o2pp}, $(A, \circ, \succ, \prec)$ is a post-perm algebra, where $\circ, \succ, \prec: A \otimes A \to A$ are defined by
	\begin{equation*}
		a \circ b = \pm (k-1) a \cdot b, \;
		a \succ b = (T \pm \id)(a) \cdot b, \;
		a \prec b = a \cdot (T \pm \id)(b), \;
		\forall a, b \in A.
	\end{equation*}
\end{remark}

%%%%%%%%%%%%%%%%%%%%%%%%%%%%%%%%%%%%%%%%%%%%%%%%%%%%%%%%%%%%%%%%%%%%%%%%%%%%%%%%
%%%%%%%%%%%%%%%%%%%%%%%%%%%%%%%%%%%%%%%%%%%%%%%%%%%%%%%%%%%%%%%%%%%%%%%%%%%%%%%%
%%%%%%%%%%%%%%%%%%%%%%%%%%%%%%%%%%%%%%%%%%%%%%%%%%%%%%%%%%%%%%%%%%%%%%%%%%%%%%%%
\section{Extended \texorpdfstring{$\mathcal{O}$}{O}-operators and generalized perm Yang-Baxter equations}\label{sec:generalized}

In this section, we introduce the notion of the generalized perm Yang-Baxter equations, which arise naturally in the study of perm bialgebras and extended $\mathcal{O}$-operators.
We then establish the relationship between extended $\mathcal{O}$-operators and the generalized perm Yang-Baxter equations.

\begin{definition}
	(\cite{lin2025infinite})
	Let $(A, \cdot)$ be a perm algebra and $r \in A \otimes A$.
	Then $r$ is called a solution of the \textbf{perm Yang-Baxter equation (perm-YBE)} in $(A, \cdot)$ if 
	\begin{equation*}
		\mathbf{A}(r) :=  r_{13} \cdot r_{23} - r_{12} \cdot r_{23} + r_{13} \cdot r_{12} - r_{12} \cdot r_{13} = 0.
	\end{equation*}
\end{definition}

\begin{definition}[\cite{lin2026quasi}]
	Let $(A, \cdot)$ be a perm algebra and $r \in A \otimes A$.
	Then $r$ is called \textbf{$(R, \ad)$-invariant} if
	\begin{equation}
		(\id \otimes R_\cdot(a) - \ad_\cdot(a) \otimes \id)(r) = 0, \;\; \forall a \in A. \label{eq:liv}
	\end{equation}
\end{definition}

Symmetric solutions of the perm Yang-Baxter equation were studied in \cite{lin2025infinite}, and solutions with $(R, \ad)$-invariant skew-symmetric part were investigated in \cite{lin2026quasi}; both are special cases of $r - \tau(r)$ is $(R, \ad)$-invariant. 
In the present paper, we retain the $(R, \ad)$-invariant condition and study a generalized version of the perm Yang-Baxter equation.

\begin{definition}
	Let $(A, \cdot)$ be a perm algebra.
	An element $r \in A \otimes A$ is called a solution of the \textbf{generalized perm Yang-Baxter equation (generalized perm-YBE)} in $(A, \cdot)$ if for all $a \in A$,
	\begin{align}
		(\id \otimes \id \otimes R_{\cdot}(a) - (\ad_{\cdot}(a) \otimes \id \otimes \id)(\tau \otimes \id))\mathbf{A}(r) &= 0,  \label{eq:gepbe1} \\
		(\id \otimes \id \otimes R_{\cdot}(a))(\id \otimes \id \otimes \id - \tau \otimes \id )\mathbf{A}(r)  &= 0. \label{eq:gepbe2}
	\end{align}
\end{definition}

\begin{lemma}\label{lem:gped}
	Let $(A, \cdot)$ be a perm algebra, and $r \in A \otimes A$.
	Define a linear map $\Delta: A \rightarrow A \otimes A$ by \begin{equation}
		\Delta(a) := (\id \otimes R_\cdot(a) - (L_\cdot - R_\cdot)(a) \otimes \id)(r), \;\; \forall a \in A. \label{eq:pcbda}
	\end{equation}
	If the skew-symmetric part of $r$ is $(R, \ad)$-invariant, then $(A^*, \Delta^*)$ is a perm algebra if and only if $r$ is a solution of the generalized perm-YBE in $(A, \cdot)$.
\end{lemma}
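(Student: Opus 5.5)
The plan is to dualize the perm identities on $A^*$ into two identities in $A\otimes A\otimes A$ and to compute them explicitly in terms of $r$. The multiplication on $A^*$ is $\Delta^*:A^*\otimes A^*\to A^*$. Hence $(A^*,\Delta^*)$ is a perm algebra if and only if, for all $a\in A$,
\begin{equation*}
(\Delta\otimes\id)\Delta(a)=(\id\otimes\Delta)\Delta(a), \qquad (\Delta\otimes\id)\Delta(a)=(\tau\otimes\id)(\Delta\otimes\id)\Delta(a).
\end{equation*}
So the task is to compute
\begin{equation*}
\Phi_1(a):=(\Delta\otimes\id)\Delta(a)-(\id\otimes\Delta)\Delta(a), \qquad \Phi_2(a):=(\id\otimes\id\otimes\id-\tau\otimes\id)(\Delta\otimes\id)\Delta(a)
\end{equation*}
and show that they coincide, up to terms that vanish by the hypothesis, with the left-hand sides of Eq.~\eqref{eq:gepbe1} and Eq.~\eqref{eq:gepbe2} respectively.

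\textbf{Expansion.} Write $r=\sum_i a_i\otimes b_i$. From Eq.~\eqref{eq:pcbda}, $\Delta(a)=\sum_i a_i\otimes b_i\cdot a-\sum_i (a\cdot a_i-a_i\cdot a)\otimes b_i$. Apply $\Delta$ once more to each tensor leg to get double sums in $a_i,b_i,a_j,b_j,a$. Next, expand $(\id\otimes\id\otimes R(a))\mathbf{A}(r)$ and $(\ad(a)\otimes\id\otimes\id)(\tau\otimes\id)\mathbf{A}(r)$ term by term. To rearrange products I will use the perm identities: associativity, $(x\cdot y)\cdot z=(y\cdot x)\cdot z$, and the consequence $x\cdot(y\cdot z)=y\cdot(x\cdot z)$. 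The goal is to write $\Phi_1(a)$ as the left-hand side of Eq.~\eqref{eq:gepbe1} plus a remainder, and likewise $\Phi_2(a)$ as the left-hand side of Eq.~\eqref{eq:gepbe2} plus a remainder.

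\textbf{Remainders.} Each remainder should consist of terms in which a map of the form $(\id\otimes R(x)-\ad(x)\otimes\id)$ acts on $r+\tau(r)$ or on $r-\tau(r)$ in two of the three tensor slots, with $x$ built from $a$ and the legs of $r$. Write $r=\mathfrak s+\mathfrak a$ with $\mathfrak a=\frac12(r-\tau(r))$ the skew-symmetric part. I then substitute $\tau(r)=r-2\mathfrak a$ wherever flipped legs appear. The hypothesis that $\mathfrak a$ is $(R,\ad)$-invariant, Eq.~\eqref{eq:liv}, applied in the appropriate slots (tensored with $\id$ or with a multiplication operator in the remaining slot), should kill each remainder. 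This gives $\Phi_1(a)=$ LHS of Eq.~\eqref{eq:gepbe1} and $\Phi_2(a)=$ LHS of Eq.~\eqref{eq:gepbe2}, which proves both directions at once.

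\textbf{Main obstacle.} The hard step is the bookkeeping that identifies the remainder precisely. In particular, I must check that only the skew part of $r$ survives in the remainder, so that invariance of $\mathfrak a$ alone suffices, and that no symmetric part is left over. I would first treat the model case of symmetric $r$ (so $\mathfrak a=0$), where the remainder should vanish identically by the perm identities; this fixes the pattern of grouping. I would then add back the $\mathfrak a$ terms, following the analogous computation in \cite{lin2025infinite, lin2026quasi}.
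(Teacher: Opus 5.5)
Your plan is correct and is essentially the paper's argument. The paper skips the raw expansion by quoting \cite[Lemma~3.28 (d)]{lin2025infinite}, which already writes the co-perm conditions for $\Delta$ as the left-hand sides of Eqs.~\eqref{eq:gepbe1}--\eqref{eq:gepbe2} plus terms built from $r-\tau(r)$. It then checks that these remainders vanish by Eq.~\eqref{eq:liv}; each has the form $\sum_j(\text{invariance-type operator})(r-\tau(r))\otimes b_j$, for example $\sum_j\big(\id\otimes R_\cdot(a\cdot a_j-a_j\cdot a)-\ad_\cdot(a\cdot a_j-a_j\cdot a)\otimes\id\big)(r-\tau(r))\otimes b_j$, exactly as you anticipate. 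One harmless correction: carried out, your computation gives $\Phi_1(a)$ and $\Phi_2(a)$ equal to \emph{minus} the left-hand sides of Eqs.~\eqref{eq:gepbe1} and \eqref{eq:gepbe2} modulo those remainders, not plus.
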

\begin{proof}
	Putting $r = \sum_{i} a_{i} \otimes b_{i} \in A \otimes A$.
	For all $a, b \in A$, we have
	\begin{align*}
		&(\ad_{\cdot}(a) \otimes \id \otimes \id)(\tau \otimes \id) ( (\tau(r)-r)_{12} \cdot r_{23} + (\tau(r)-r)_{12} \cdot r_{13} - r_{13} \cdot (\tau(r)-r)_{12}) \\
		&= -\sum_{j} (\ad_{\cdot}(a) \otimes \id \otimes \id) (\tau \otimes \id)( (\id \otimes R_{\cdot}(a_j) - \ad_{\cdot}(a_j) \otimes \id)(r - \tau(r)) \otimes b_j) = 0, \\
		&(\id \otimes \ad_{\cdot}(a) \otimes \id)( (r - \tau(r))_{12} \cdot r_{23}) + (\ad_{\cdot}(a) \otimes \id \otimes \id)((r-\tau(r))_{12} \cdot r_{13}) \\
		&=\sum_{j} (\id \otimes R_{\cdot}(a \cdot a_j - a_j \cdot a) - \ad_{\cdot}(a \cdot a_j - a_j \cdot a) \otimes \id)(r - \tau(r)) \otimes b_j = 0.
	\end{align*}
	Then by \cite[Lemma~3.28 (d)]{lin2025infinite}, $(A^*, \Delta^*)$ is a perm algebra if and only if Eqs.~\eqref{eq:gepbe1}-\eqref{eq:gepbe2} hold.
	The proof is complete.
\end{proof}

\begin{remark}
	If $r$ is a solution of the generalized perm-YBE in $(A, \cdot)$, whose skew-symmetric part is $(R, \ad)$-invariant, then by \cite[Lemma~3.28]{lin2025infinite} and Lemma~\ref{lem:gped} together with the equalities
	\begin{align*}
		&(R_\cdot(a) \otimes R_\cdot(b))(r-\tau(r)) = (R_\cdot(a) \otimes \id)(\id \otimes R_\cdot(b) - \ad_\cdot(b) \otimes \id)(r-\tau(r)) = 0, \\
		&( (R_\cdot(b \cdot a)-R_\cdot(a \cdot b)) \otimes \id - \ad_\cdot(b) \otimes \ad_\cdot(a) )(r-\tau(r)) \\
		&= \tau(\id \otimes \ad_\cdot(b))(\ad_\cdot(a) \otimes \id - \id \otimes R_\cdot(a))(r - \tau(r)) = 0, 
	\end{align*}
	it follows that $(A, \cdot, \Delta)$ is a perm bialgebra, where $\Delta:A \to A \otimes A$ is defined by Eq.~\eqref{eq:pcbda}.
\end{remark}

Let $A$ be a vector space.
Any $r \in A \otimes A$ can be identified as maps from $A^*$ to $A$, which we denote by $r_{+}: A^* \rightarrow A$ and $r_{-}: A^* \rightarrow A$, respectively and explicitly,
\begin{equation*}
	\langle r_{+}(x^*), y^*\rangle = \langle x^*, r_{-}(y^*)\rangle = \langle r, x^* \otimes y^*\rangle, \;\; \forall x^*, y^* \in A^*.
\end{equation*}
Clearly, $r_{-} = r_{+}^* = (\tau(r))_{+}$.
Note that $r$ is called the \textbf{2-tensor form} of a linear map $\varphi: A^* \rightarrow A$ if $r_{+} = \varphi$.
Moreover, the multiplication on $A^*$ defined by Eqs.~\eqref{eq:pcbda} (as the dual) is given by
\begin{equation}
	x^* \cdot_r y^* = L_{\cdot}^*(r_{+}(x^*)) y^* + \ad_{\cdot}^*(r_{-}(y^*)) x^*, \;\; \forall x^*,y^* \in A^*. \label{eq:pcbdr}
\end{equation}
Writing $r$ as $r = \Lambda + \Theta$ with $\Lambda \in \mathrm{Sym}^2(A)$ and $\Theta \in \mathrm{Alt}^2(A)$, i.e.,
\begin{equation*}
	\Lambda = \frac{r + \tau(r)}{2}, \;\; \Theta = \frac{r - \tau(r)}{2}
\end{equation*}
Immediately, 
\begin{equation*}
	\Lambda_{+} = \Lambda_{-} = \frac{r_{+} + r_{-}}{2}, \;\; \Theta_{+} = -\Theta_{-} = \frac{r_{+} - r_{-}}{2}.
\end{equation*}
That is, $\Lambda_{+}$ and $\Theta_{+}$ are the symmetrizer and antisymmetrizer of $r_{\pm}$ respectively.
Clearly, $\tau(r) = \Lambda - \Theta$.

\begin{lemma}\label{lem:sinvb}
	Let $(A, \cdot)$ be a perm algebra and $r \in A \otimes A$ be skew-symmetric.
	Then the following conditions are equivalent.
	\begin{enumerate}
		\item
		$r$ is $(R, \ad)$-invariant.
		
		\item
		$r_{+}:  A^* \rightarrow A$ is an $A$-bimodule homomorphism from $(A^*, L_{\cdot}^*, L_{\cdot}^* - R_{\cdot}^*)$ to $(A, L_{\cdot}, R_{\cdot})$.
		
		\item
		$r_{+}: A^* \rightarrow A$ is balanced associated to $(A^*, L_{\cdot}^*, L_{\cdot}^* - R_{\cdot}^*)$.
	\end{enumerate}
\end{lemma}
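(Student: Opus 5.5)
The plan is to fix $r=\sum_i a_i\otimes b_i$ with $\tau(r)=-r$, and to rewrite each of the three conditions as an identity of tensors in $A\otimes A$ by pairing with $x^*\otimes y^*$. By definition $r_{+}(x^*)=\sum_i\langle x^*,a_i\rangle b_i$, so every condition on $r_+$ becomes a condition on $r$ after pairing. I will prove (1)$\Leftrightarrow$(3) directly, and (1)$\Leftrightarrow$(2) with one extra step for the left action.

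For (1)$\Leftrightarrow$(3), I pair the balanced condition \eqref{eq:bal} for $S=r_+$ with an arbitrary $b\in A$:
\begin{equation*}
\langle y^*, r_+(x^*)\cdot b\rangle=\langle x^*, r_+(y^*)\cdot b-b\cdot r_+(y^*)\rangle .
\end{equation*}
The left side is $\langle (\id\otimes R_\cdot(b))r, x^*\otimes y^*\rangle$. The right side equals $\sum_i\langle y^*,a_i\rangle\langle x^*,-\ad_\cdot(b)b_i\rangle$, which is the pairing of $-(\ad_\cdot(b)\otimes\id)\tau(r)=(\ad_\cdot(b)\otimes\id)r$ with $x^*\otimes y^*$, using skew-symmetry. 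Hence (3) is exactly Eq.~\eqref{eq:liv}.

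For (2), the bimodule homomorphism condition \eqref{eq:ainv} has two parts. The right-action part $r_+((L_\cdot^*-R_\cdot^*)(a)x^*)=r_+(x^*)\cdot a$ pairs to
\begin{equation*}
\sum_i\langle x^*,\ad_\cdot(a)a_i\rangle b_i=\sum_i\langle x^*,a_i\rangle\, b_i\cdot a ,
\end{equation*}
which is Eq.~\eqref{eq:liv} again. So (2)$\Rightarrow$(1) is immediate.

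The left-action part $r_+(L_\cdot^*(a)x^*)=a\cdot r_+(x^*)$ is equivalent to $(L_\cdot(a)\otimes\id)r=(\id\otimes L_\cdot(a))r$. I must derive this from (1), and that derivation is the only nontrivial step, where skew-symmetry is used. I apply $\tau$ to \eqref{eq:liv} and use $\tau(r)=-r$ to get $(R_\cdot(a)\otimes\id)r=(\id\otimes\ad_\cdot(a))r$. Subtracting this from \eqref{eq:liv} gives
\begin{equation*}
(\id\otimes R_\cdot(a)-R_\cdot(a)\otimes\id)r=(\ad_\cdot(a)\otimes\id-\id\otimes\ad_\cdot(a))r .
\end{equation*}
Expanding $\ad_\cdot=L_\cdot-R_\cdot$, the $R_\cdot$ terms cancel and what remains is $(L_\cdot(a)\otimes\id-\id\otimes L_\cdot(a))r=0$. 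This gives the left-action part, so (1)$\Rightarrow$(2), which completes the cycle.
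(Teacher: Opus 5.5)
Your proof is correct and complete. The balanced condition (3) and the right-action half of (2) each reduce to exactly Eq.~\eqref{eq:liv}. For (3) this uses $\tau(r)=-r$. For the right-action half of (2) it holds for arbitrary $r$. Your derivation of $(L_\cdot(a)\otimes\id)r=(\id\otimes L_\cdot(a))r$, by applying $\tau$ to \eqref{eq:liv} and subtracting, also checks out term by term.

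The paper gives no in-text argument here; it simply invokes \cite[Lemma~2.13]{lin2026quasi}. Your route is therefore a self-contained replacement for that citation, and it makes two things visible. First, the equivalence uses no perm-algebra identities at all; it is pure linear algebra once the dual bimodule $(A^*, L_\cdot^*, L_\cdot^*-R_\cdot^*)$ is fixed. Second, skew-symmetry is needed in exactly two places: to identify (3) with \eqref{eq:liv}, and to recover the left-action compatibility of $r_+$ from \eqref{eq:liv}.

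The citation buys brevity. Your version buys independence from the external reference and pinpoints exactly where the skew-symmetry hypothesis enters.
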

\begin{proof}
	It follows from \cite[Lemma~2.13]{lin2026quasi}.
\end{proof}

\begin{proposition}\label{prop:eocad2gp}
	Let $(A, \cdot)$ be a perm algebra and $r \in A \otimes A$.
	Define a linear map $\Delta: A \to A \otimes A$ by Eq.~\eqref{eq:pcbda}.
	Writing $r$ as $r = \Lambda + \Theta$ with $\Lambda \in \mathrm{Sym}^2(A)$ and $\Theta \in \mathrm{Alt}^2(A)$.
	Suppose that $\Theta$ is $(R, \ad)$-invariant.
	If $\Lambda_{+}$ is an extended $\mathcal{O}$-operator of weight $0$ with extension $\Theta_{+}$ of mass $(\kappa, 0)$ on $(A, \cdot)$ associated to $(A^*, L_{\cdot}^*, L_{\cdot}^* - R_{\cdot}^*)$, then $(A, \Delta^*)$ is a perm algebra on $A^*$ and $r$ is a solution of the generalized perm-YBE in $(A, \cdot)$.
\end{proposition}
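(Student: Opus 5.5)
Because $\Theta$ is skew-symmetric and $(R,\ad)$-invariant, Lemma~\ref{lem:gped} already shows that $(A^*,\Delta^*)$ being a perm algebra is equivalent to $r$ solving the generalized perm-YBE. So it is enough to prove that $(A^*,\Delta^*)$ is a perm algebra. The multiplication $\Delta^*$ is the product $\cdot_r$ of Eq.~\eqref{eq:pcbdr}. I plan to identify $\cdot_r$ with the product $\circ$ of Proposition~\ref{prop:pm2p} and then apply Theorem~\ref{thm:eo2ps}.

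First I fix the setting. Take $V=A^*$ with bimodule $(A^*,L_\cdot^*,L_\cdot^*-R_\cdot^*)$, regarded as an $A$-bimodule perm algebra with trivial product, so $\lambda=0$. Set $\pi_+=r_+$ and $\pi_-=r_-$. Their symmetrizer is $T=\Lambda_+$ and their antisymmetrizer is $S=\Theta_+$. Since $\Theta$ is skew-symmetric and $(R,\ad)$-invariant, Lemma~\ref{lem:sinvb} shows that $\Theta_+$ is balanced associated to $(A^*,L_\cdot^*,L_\cdot^*-R_\cdot^*)$, and in fact a balanced $A$-bimodule homomorphism.

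Next I match the products. With $\frakl=L_\cdot^*$ and $\frakr=L_\cdot^*-R_\cdot^*$, the product of Proposition~\ref{prop:pm2p} with $\lambda=0$ is
\[
x^*\circ y^*=L_\cdot^*(r_+(x^*))y^*+(L_\cdot^*-R_\cdot^*)(r_-(y^*))x^*.
\]
This is exactly $x^*\cdot_r y^*$ from Eq.~\eqref{eq:pcbdr}. The only point to check is the convention $\ad^*_\cdot(a)=L^*_\cdot(a)-R^*_\cdot(a)$, i.e.\ $\ad^*_\cdot(a)=(\ad_\cdot(a))^*$. The computation in Proposition~\ref{prop:pm2p}, which uses only balancedness of $S$, then gives $x^*\circ y^*=x^*\circ_T y^*$ with $T=\Lambda_+$ and $\lambda=0$.

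Now I apply Theorem~\ref{thm:eo2ps}. By hypothesis $T=\Lambda_+$ is an extended $\mathcal{O}$-operator of weight $0$ with extension $S=\Theta_+$ of mass $(\kappa,0)$, and $S$ is balanced. The theorem therefore says $(A^*,\circ_T)$ is a perm algebra. Hence $(A^*,\cdot_r)=(A^*,\Delta^*)$ is a perm algebra, and Lemma~\ref{lem:gped} yields that $r$ solves the generalized perm-YBE.

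The main obstacle is the bookkeeping in the second step. I need to confirm that the dual product in Eq.~\eqref{eq:pcbdr} really is $\frakl(r_+(x^*))y^*+\frakr(r_-(y^*))x^*$ with the coadjoint-type bimodule, including the order of arguments and the signs. I also need to check that the balanced identity Eq.~\eqref{eq:bal} for $\Theta_+$ absorbs the $\pm\Theta_+$ parts exactly, as in the proof of Proposition~\ref{prop:pm2p}. If there were a mismatch, I would fall back on verifying Eq.~\eqref{eq:soo} directly, using the computation from the proof of Theorem~\ref{thm:eo2ps} with $\gamma=0$.
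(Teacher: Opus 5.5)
Your proposal is correct and follows the paper's own proof. You use Lemma~\ref{lem:sinvb} to get that $\Theta_+$ is balanced, rewrite $\cdot_r$ as $\circ_T$ with $T=\Lambda_+$ and $\lambda=0$, apply Theorem~\ref{thm:eo2ps} to conclude that $(A^*,\Delta^*)$ is a perm algebra, and finish with Lemma~\ref{lem:gped}; the convention $\ad^*_\cdot(a)=L^*_\cdot(a)-R^*_\cdot(a)$ that you flag is the paper's, so the $\pm\Theta_+$ terms cancel exactly as in Proposition~\ref{prop:pm2p}.
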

\begin{proof}
	By Lemma~\ref{lem:sinvb}, $\Theta_{+}$ is a balanced $A$-homomorphism from $(A^*, L_{\cdot}^*, L_{\cdot}^* - R_{\cdot}^*)$ to $(A, L_{\cdot}, R_{\cdot})$.
	Hence, for all $x^*, y^* \in A^*$,
	\begin{equation*}
		x^* \cdot_r y^* = L_{\cdot}^*(r_{+}(x^*)) y^* + \ad_{\cdot}^*(r_{-}(y^*)) x^* = L_{\cdot}^*(\Lambda_{+}(x^*)) y^* + \ad_{\cdot}^*(\Lambda_{+}(y^*)) x^*.
	\end{equation*}
	Theorem~\ref{thm:eo2ps} then implies that $(A^*, \Delta^* = \cdot_r)$ is a perm algebra.
	Moreover, by Lemma~\ref{lem:gped}, $r$ is a solution of the generalized perm-YBE in $(A, \cdot)$.
\end{proof}

Let $T: V \rightarrow A$ be a linear map.
Through the identification $\Hom(V, A) \cong A \otimes V^* \subset (A \oplus V^*) \otimes (A \oplus V^*)$, we may regard $T$ as an element $r^{T} \in (A \oplus V^*) \otimes (A \oplus V^*)$ of the tensor product.
Concretely, for a basis $\{e_1, \cdots, e_n\}$ of $V$ with the dual basis $\{e^1, \cdots, e^n\}$, we have $r^T = \sum_{i} T(e_i) \otimes e^i$.

\begin{theorem}\label{thm:gl2gp}
	Let $(A, \cdot)$ be a perm algebra, $(V, \frakl, \frakr)$ be a bimodule of $(A, \cdot)$, and $(\mathfrak{A} := A \ltimes_{\frakl^*, \frakl^* - \frakr^*} V^*, \bullet)$ be the semi-direct product of $(A, \cdot)$ and $(V^*, \frakl^*, \frakl^* - \frakr^*)$.
	Let $T: V \rightarrow A$ be a linear map.
	Then $\hat{r} := r^T + \tau(r^T)$ is a symmetric solution of the generalized perm-YBE in $(A \ltimes_{\frakl^*, \frakl^* - \frakr^*} V^*, \bullet)$ if and only if (for all $a \in A$ and $u, v \in V$)
	\begin{align}
		&\mathcal{A}_{T}(\frakl(a)u, v)  = \mathcal{A}_{T}(\frakr(a)u, v) = \mathcal{A}_{T}(u, \frakl(a)v) = a \cdot \mathcal{A}_{T}(u, v), \label{eq:glgp1} \\
		&\mathcal{A}_{T}(u, v) \cdot a = \mathcal{A}_{T}(v, u) \cdot a = \mathcal{A}_{T}(u, \frakr(a)v), \label{eq:glgp2} \\
		&\frakl(\mathcal{A}_{T}(v, u)) w = \frakl(\mathcal{A}_{T}(u, v)) w = \frakr(\mathcal{A}_{T}(v, w)) u, \label{eq:glgp3} 
	\end{align}
	where $\mathcal{A}_{T}$ is defined by
	\begin{equation*}
		\mathcal{A}_{T}(u, v) := T(u) \cdot T(v) - T(\frakl(T(u))v + \frakr(T(v))u), \;\; \forall u, v \in V.
	\end{equation*}
\end{theorem}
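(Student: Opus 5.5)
Because $\hat r$ is symmetric, its skew-symmetric part vanishes and is trivially $(R,\ad)$-invariant. Lemma~\ref{lem:gped} therefore tells us that $\hat r$ solves the generalized perm-YBE in $(\mathfrak{A},\bullet)$ if and only if $(\mathfrak{A}^*,\Delta^*)$ is a perm algebra, where $\Delta$ is built from $\hat r$ by Eq.~\eqref{eq:pcbda}. The plan is to avoid expanding $\mathbf{A}(\hat r)$ directly. Instead we describe the multiplication $\cdot_{\hat r}$ of Eq.~\eqref{eq:pcbdr} on $\mathfrak{A}^* = A^* \oplus V$ explicitly, and then test the perm identities on homogeneous triples.

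\textbf{Step 1: compute the product.} Choose dual bases and write $r^T=\sum_i T(e_i)\otimes e^i$. Then $\hat r_+=\hat r_-$, and as a map $A^*\oplus V\to A\oplus V^*$ it is
\[
\hat r_+(x^*+u)=T(u)+T^*(x^*).
\]
Next we dualize $L_\bullet$ and $\ad_\bullet$ of the semi-direct product with bimodule $(V^*,\frakl^*,\frakl^*-\frakr^*)$; the dual of this bimodule is again $(V,\frakl,\frakr)$, up to the standard identifications. Substituting into Eq.~\eqref{eq:pcbdr} should give the following on $A^*\oplus V$:
\begin{itemize}
\item $u\cdot v=u\circ_T v=\frakl(T(u))v+\frakr(T(v))u$ on $V\otimes V$;
\item mixed products involving $L_\cdot^*(T(u))x^*$, $\ad_\cdot^*(T(u))x^*$ and terms such as $\frakl^*$-type actions of $T^*(x^*)$ landing back in $A^*$ or $V$;
\item a product on $A^*\otimes A^*$ built from $T^*$.
\end{itemize}
Sorting out precisely which component each term lands in is bookkeeping, but it must be done carefully.

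\textbf{Step 2: split by degree.} The perm identities $X(YZ)=(XY)Z=(YX)Z$ are trilinear, so they decompose by how many arguments lie in $A^*$ versus $V$.
\begin{itemize}
\item The $(V,V,V)$ component is, by Lemma~\ref{lem:npaeq} with $\lambda=0$, exactly Eq.~\eqref{eq:glgp3}.
\item The components with one argument in $A^*$ are paired against $a\in A$ and $w\in V$. Using $\langle T^*(x^*),w\rangle=\langle x^*,T(w)\rangle$, they should yield identities of the form $\langle x^*,\mathcal{A}_T(\cdot,\cdot)\cdot a\rangle$ and $\langle x^*, a\cdot\mathcal{A}_T(\cdot,\cdot)\rangle$ compared with $\mathcal{A}_T$ evaluated on $\frakl(a)$- or $\frakr(a)$-twisted arguments. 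These give Eqs.~\eqref{eq:glgp1}--\eqref{eq:glgp2}. The several positions of $x^*$, combined with the two equalities in the perm axiom, account for the multiple equalities in each chain.
\item The components with two or three arguments in $A^*$ should hold automatically, or reduce to the bimodule axioms \eqref{eq:repp} together with the perm identities of $A$, since they involve $T$ at most linearly in a way that cancels.
\end{itemize}

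\textbf{Main obstacle.} The hardest step is the single-$A^*$ components. The aim there is to show that after cancellation by \eqref{eq:repp}, only the combination $\mathcal{A}_T$ survives, and that the resulting identities are exactly \eqref{eq:glgp1}--\eqref{eq:glgp2} with no extra conditions. First, I would pair each such identity with arbitrary $a\in A$ and $w\in V$. Second, I would move all $T$'s onto $V$-arguments via $T^*$. Third, I would group the terms as $\langle x^*,\,\cdot\,\rangle$ applied to expressions in $A$. The two-$A^*$ components must also be checked to vanish identically; if they do not, the plan fails and the statement would need extra hypotheses.
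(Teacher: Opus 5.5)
Your route is correct, and it differs genuinely from the paper's. The paper never dualizes. It expands $\mathbf{A}(\hat r)$ in a basis as $\sum_{i,j}\big((\mathcal{A}_T(e_i,e_j)-\mathcal{A}_T(e_j,e_i))\otimes e^i\otimes e^j-e^i\otimes e^j\otimes\mathcal{A}_T(e_i,e_j)+e^i\otimes\mathcal{A}_T(e_i,e_j)\otimes e^j\big)$ and applies the two operators of \eqref{eq:gepbe1}--\eqref{eq:gepbe2} with $a+\xi^*$. It then reads off seven component identities and regroups them into \eqref{eq:glgp1}--\eqref{eq:glgp3}. You instead invoke Lemma~\ref{lem:gped}, which is legitimate because the skew part of $\hat r$ is $0$, and test the perm axioms of $\cdot_{\hat r}$ on $A^*\oplus V$. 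The paper's computation works straight from the definition and does not depend on Lemma~\ref{lem:gped} or the external lemma behind it. Your route explains structurally where the three chains come from.

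Your Step~1 guesses are off, but only in ways that make the job easier. Pairing with $c+\zeta\in A\oplus V^*$ gives
\[ \langle (x^*+u)\cdot_{\hat r}(y^*+v),\,c+\zeta\rangle=\langle\zeta,u\circ_T v\rangle+\langle y^*,T(u)\cdot c-T(\frakr(c)u)\rangle+\langle x^*,T((\frakl-\frakr)(c)v)+\ad_{\cdot}(T(v))c\rangle. \]
So $x^*\cdot_{\hat r}y^*=0$: there is no product on $A^*\otimes A^*$, and the mixed products land only in $A^*$. Thus $A^*\oplus V$ is the semi-direct product of $(V,\circ_T)$ with the square-zero ideal $A^*$. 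Consequences:
\begin{itemize}
\item The components with two or three entries in $A^*$ vanish for trivial reasons, so your worry about them is unfounded.
\item In the one-$A^*$ components you only pair against $c\in A$.
\item Those components are exactly the identities \eqref{eq:repp} for $\ell(u)=u\cdot_{\hat r}(-)$ and $\rho(v)=(-)\cdot_{\hat r}v$ acting on $A^*$.
\end{itemize}
Let $\lambda_u(c)=T(u)\cdot c-T(\frakr(c)u)$ and $\mu_v(c)=T((\frakl-\frakr)(c)v)+\ad_{\cdot}(T(v))c$ be the transposes of $\ell(u)$ and $\rho(v)$. Using only \eqref{eq:repp} and the perm axioms of $A$, one finds:
\begin{itemize}
\item $\ell(u\circ_T v)=\ell(u)\ell(v)$ iff $\mathcal{A}_T(u,v)\cdot a=\mathcal{A}_T(v,\frakr(a)u)$;
\item $\ell(u)\ell(v)=\ell(v)\ell(u)$ iff $\mathcal{A}_T(u,\frakr(a)v)=\mathcal{A}_T(v,\frakr(a)u)$;
\item $\rho(v)\rho(u)=\rho(v)\ell(u)$ iff $\mathcal{A}_T((\frakl-\frakr)(a)v,u)=0$;
\item given the above, $\rho(v)\ell(u)=\ell(u)\rho(v)$ iff $\mathcal{A}_T(u,\frakl(a)v)=\mathcal{A}_T(\frakr(a)u,v)$;
\item given the above, $\rho(u\circ_T v)=\rho(v)\rho(u)$ iff $\mathcal{A}_T(u,\frakl(a)v)=a\cdot\mathcal{A}_T(u,v)$.
\end{itemize}
These five conditions are precisely \eqref{eq:glgp1}--\eqref{eq:glgp2}, and they are independent of \eqref{eq:glgp3}, so no extra hypotheses arise. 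In your framework, \eqref{eq:glgp3} says $(V,\circ_T)$ is a perm algebra, and \eqref{eq:glgp1}--\eqref{eq:glgp2} say $A^*$ is a bimodule of $(V,\circ_T)$.
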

\begin{proof}
	Let $\{e_1, \cdots, e_n\}$ be a basis of $V$ and $\{e^1, \cdots, e^n\}$ be the dual basis.
	Then
	\begin{equation*}
		\hat{r} := r^{T} + \tau(r^T) = \sum_{i} \big( T(e_i) \otimes e^i + e^i \otimes T(e_i) \big).
	\end{equation*}
	Thus,
	\begin{align*}
		\mathbf{A}(\hat{r}) & = \sum_{i, j}\big( (\mathcal{A}_{T}(e_i, e_j) - \mathcal{A}_{T}(e_j, e_i)) \otimes e^i \otimes e^j - e^i \otimes e^j \otimes \mathcal{A}_T(e_i, e_j) + e^i \otimes \mathcal{A}_T(e_i, e_j) \otimes e^j \big).
	\end{align*}
	Therefore, $(\id \otimes \id \otimes R_\bullet(a + \xi^*) - (\ad_\bullet(a + \xi^*) \otimes \id \otimes \id)(\tau \otimes \id))\mathbf{A}(\hat{r}) = 0$ holds for all $a \in A$ and $\xi^* \in V^*$ if and only if
	\begin{align*}
		(\id \otimes \id \otimes R_\bullet(a) - (\ad_\bullet(a) \otimes \id \otimes \id)(\tau \otimes \id))\mathbf{A}(\hat{r}) &= 0, \\
		(\id \otimes \id \otimes R_\bullet(e^k) - (\ad_\bullet(e^k) \otimes \id \otimes \id)(\tau \otimes \id))\mathbf{A}(\hat{r}) &= 0,
	\end{align*}
	hold for all $a \in A$ and $e^k \in V^*$ ($k =1,\cdots n$) if and only if
	\begin{align*}
		\mathcal{A}_{T}(e_i, (\frakl - \frakr)(a)e_j) - \mathcal{A}_{T}((\frakl - \frakr)(a)e_j, e_i) - \mathcal{A}_{T}(\frakr(a)e_i, e_j) + \mathcal{A}_{T}(e_j, \frakr(a)e_i) &= 0, \\
		\mathcal{A}_{T}(e_i, e_j) \cdot a - \mathcal{A}_{T}(e_i, \frakr(a)e_j) &= 0, \\
		\mathcal{A}_{T}(e_i, (\frakl - \frakr)(a)e_j) - a \mathcal{A}_{T}(e_i, e_j) + \mathcal{A}_{T}(e_i, e_j) \cdot a &= 0, \\
		-\frakl(\mathcal{A}_{T}(e_j, e_i))e_k + \frakr(\mathcal{A}_{T}(e_j, e_k))e_i &= 0,
	\end{align*}
	hold for all $a \in A$ and $i, j, k \in \{1, \cdots n\}$ if and only if
	\begin{align}
		\mathcal{A}_{T}(u, (\frakl - \frakr)(a)v) - \mathcal{A}_{T}((\frakl - \frakr)(a)v, u) - \mathcal{A}_{T}(\frakr(a)u, v) + \mathcal{A}_{T}(v, \frakr(a)u) &= 0, \label{eq:pf1}\\
		\mathcal{A}_{T}(u, v) \cdot a - \mathcal{A}_{T}(u, \frakr(a)v) &= 0, \label{eq:pf2}\\
		\mathcal{A}_{T}(u, (\frakl - \frakr)(a)v) - a \mathcal{A}_{T}(u, v) + \mathcal{A}_{T}(u, v) \cdot a &= 0, \label{eq:pf3}\\
		-\frakl(\mathcal{A}_{T}(v, u))w + \frakr(\mathcal{A}_{T}(v, w))u &= 0, \label{eq:pf4}
	\end{align}
	hold for all $a \in A$ and $u, v, w \in V$. 
	Similarly, $(\id \otimes \id \otimes R_\bullet(a+\xi^*))(\id \otimes \id \otimes \id - \tau \otimes \id )\mathbf{A}(\hat{r}) = 0$ holds for all $a \in A$ and $\xi^* \in V^*$ if and only if
	\begin{align}
		\mathcal{A}_T(u, v) \cdot a - \mathcal{A}_T(v, u) \cdot a  & = 0, \label{eq:pf5}\\
		\mathcal{A}_T((\frakl - \frakr)(a)u, v) & = 0, \label{eq:pf6}\\
		\frakl(\mathcal{A}_{T}(u, v))w - \frakl(\mathcal{A}_{T}(v, u))w & = 0,\label{eq:pf7}
	\end{align}
	hold for all $a \in A$ and $u, v, w \in V$.
	It is easy to see that Eqs.~\eqref{eq:pf1}-\eqref{eq:pf7} hold if and only if Eqs.~\eqref{eq:glgp1}-\eqref{eq:glgp3} hold.
	The proof is complete.
\end{proof}

\begin{proposition}\label{prop:ex2gp}
	Let $(A, \cdot)$ be a perm algebra, $(V, \cdot_V, \frakl, \frakr)$ be an $A$-bimodule perm algebra, and $T, S: V \to A$ two linear maps.
	Let $(A \ltimes_{\frakl^*, \frakl^* - \frakr^*} V^*, \bullet)$ be the semi-direct product of $(A, \cdot)$ and $(V^*, \frakl^*, \frakl^* - \frakr^*)$.
	If $S$ is a balanced $A$-bimodule homomorphism from $(V, \frakl, \frakr)$ to $(A, L_{\cdot}, R_{\cdot})$, and $T: V \to A$ is an extended $\mathcal{O}$-operator of weight $\lambda$ with extension $S$ of mass $(\kappa, \gamma)$ associated to $(V, \cdot_V, \frakl, \frakr)$,
	then $r^T + \tau(r^T)$ is a symmetric solution of the generalized perm-YBE in $(A \ltimes_{\frakl^*, \frakl^* - \frakr^*} V^*, \bullet)$ if and only if
	\begin{align}
		&\lambda T(\frakl(a)(u \cdot_V v)) = \lambda a \cdot T(u \cdot_V v), \label{eq:eogp1} \\
		&\lambda T(u \cdot_V v) \cdot a = \lambda T(\frakr(a)(u \cdot_V v)), \label{eq:eogp2 } \\
		&\lambda \frakl(T(u \cdot_V v))w = \lambda \frakl(T(v \cdot_V u))w = \lambda \frakr(T(v \cdot_V w))u, \label{eq:eogp3} 
	\end{align}
	for all $u, v, w \in V$.
\end{proposition}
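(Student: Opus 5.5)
The plan is to reduce everything to Theorem~\ref{thm:gl2gp}. Since $V$ carries the bimodule structure $(V,\frakl,\frakr)$, that theorem says $r^T+\tau(r^T)$ is a symmetric solution of the generalized perm-YBE in $A\ltimes_{\frakl^*,\frakl^*-\frakr^*}V^*$ if and only if Eqs.~\eqref{eq:glgp1}--\eqref{eq:glgp3} hold for $\mathcal{A}_T$. That $\mathcal{A}_T$ has no weight term. Because $T$ is an extended $\mathcal{O}$-operator, Eq.~\eqref{eq:exto} rewrites it as
\begin{equation*}
\mathcal{A}_T(u,v)=\kappa S(u)\cdot S(v)+\gamma S(u\cdot_V v)+\lambda T(u\cdot_V v).
\end{equation*}
Each of Eqs.~\eqref{eq:glgp1}--\eqref{eq:glgp3} is linear in $\mathcal{A}_T$. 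So I will split $\mathcal{A}_T$ into these three pieces and check the conditions piece by piece. The goal is to show that the $\kappa$- and $\gamma$-pieces satisfy all conditions identically, and that the $\lambda$-piece satisfies them exactly when Eqs.~\eqref{eq:eogp1}--\eqref{eq:eogp3} hold.

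\emph{The $\kappa$-piece $S(u)\cdot S(v)$.} I will use Eq.~\eqref{eq:ainv} together with the perm identities $x(yz)=(xy)z=(yx)z$.
\begin{itemize}
\item For \eqref{eq:glgp1}: $S(\frakl(a)u)S(v)=(aS(u))S(v)$, while $S(\frakr(a)u)S(v)=(S(u)a)S(v)=(aS(u))S(v)$. Also $S(u)S(\frakl(a)v)=S(u)(aS(v))=(S(u)a)S(v)$, which is the same. All of these equal $a\cdot(S(u)S(v))$.
\item For \eqref{eq:glgp2}: $(S(u)S(v))a=(S(v)S(u))a$ by left commutativity, and $S(u)S(\frakr(a)v)=S(u)(S(v)a)$ by associativity.
\item For \eqref{eq:glgp3}: $\frakl(S(u)S(v))=\frakl(S(u))\frakl(S(v))$ is symmetric in $u,v$ by \eqref{eq:repp}. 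The equality with $\frakr(S(v)S(w))u$ is exactly the $\kappa$-computation already carried out in the proof of Theorem~\ref{thm:eo2ps}, using \eqref{eq:bal} and \eqref{eq:repp}.
\end{itemize}

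\emph{The $\gamma$-piece $S(u\cdot_V v)$.} I will combine \eqref{eq:pmc}, \eqref{eq:pmd} with \eqref{eq:ainv}.
\begin{itemize}
\item For \eqref{eq:glgp1}: by \eqref{eq:pmc}, $(\frakl(a)u)\cdot_V v$, $(\frakr(a)u)\cdot_V v$ and $u\cdot_V\frakl(a)v$ all equal $\frakl(a)(u\cdot_V v)$. Applying $S$ to the latter gives $a\cdot S(u\cdot_V v)$.
\item For \eqref{eq:glgp2}: by \eqref{eq:pmd}, $\frakr(a)(u\cdot_V v)=\frakr(a)(v\cdot_V u)=u\cdot_V\frakr(a)v$. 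Applying $S$ gives $S(u\cdot_V v)a=S(v\cdot_V u)a=S(u\cdot_V\frakr(a)v)$.
\item For \eqref{eq:glgp3}: this is the $\gamma$-part of the proof of Theorem~\ref{thm:eo2ps}, via Lemma~\ref{lem:baie}.
\end{itemize}

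\emph{The $\lambda$-piece $T(u\cdot_V v)$.} The same identities from \eqref{eq:pmc} and \eqref{eq:pmd} turn all arguments into $\frakl(a)(u\cdot_V v)$, respectively $\frakr(a)(u\cdot_V v)$. Hence \eqref{eq:glgp1} collapses to $\lambda T(\frakl(a)(u\cdot_V v))=\lambda a\cdot T(u\cdot_V v)$, which is Eq.~\eqref{eq:eogp1}.

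For \eqref{eq:glgp2}, the equality $\lambda T(u\cdot_V v)a=\lambda T(u\cdot_V\frakr(a)v)=\lambda T(\frakr(a)(u\cdot_V v))$ is exactly Eq.~\eqref{eq:eogp2 }. The remaining equality $\lambda T(v\cdot_V u)a=\lambda T(u\cdot_V v)a$ then follows automatically, since $\frakr(a)(v\cdot_V u)=\frakr(a)(u\cdot_V v)$. Finally, \eqref{eq:glgp3} for this piece is literally Eq.~\eqref{eq:eogp3}.

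Adding the three pieces gives the stated equivalence. I expect the main obstacle to be the bookkeeping, specifically confirming that every equality of \eqref{eq:glgp1}--\eqref{eq:glgp3} is met identically by the $S$-terms. The potentially delicate part is the $\frakr$-equality in \eqref{eq:glgp3}, and I would handle it by citing the computation in Theorem~\ref{thm:eo2ps} rather than redoing it.
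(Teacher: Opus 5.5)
Your proposal is correct and follows the paper's route. The paper rewrites $\mathcal{A}_T(u,v)=\lambda T(u\cdot_V v)+\kappa S(u)\cdot S(v)+\gamma S(u\cdot_V v)$, notes that since $S$ is a balanced $A$-bimodule homomorphism Eqs.~\eqref{eq:glgp1}--\eqref{eq:glgp3} reduce to the $\lambda$-conditions, and concludes by Theorem~\ref{thm:gl2gp}; your piece-by-piece check via \eqref{eq:pmc}, \eqref{eq:pmd}, \eqref{eq:ainv} and the computations in Theorem~\ref{thm:eo2ps} just fills in the details the paper leaves implicit.
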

\begin{proof}
	Since $T: V \to A$ is an extended $\mathcal{O}$-operator of weight $\lambda$ with extension $S$ of mass $(\kappa, \gamma)$ associated to $(V, \cdot_V, \frakl, \frakr)$, we have (for all $u, v \in V$)
	\begin{equation*}
		\mathcal{A}_{T}(u, v) := T(u) \cdot T(v) - T(\frakl(T(u))v + \frakr(T(v))u) = \lambda T(u \cdot_V v) + \kappa S(u) \cdot S(v) + \gamma S(u \cdot_V v).
	\end{equation*}
	Since $S$ is a balanced $A$-bimodule homomorphism from $(V, \frakl, \frakr)$ to $(A, L_{\cdot}, R_{\cdot})$, it follows that Eqs.~\eqref{eq:glgp1}-\eqref{eq:glgp3} and Eqs.~\eqref{eq:eogp1}-\eqref{eq:eogp3} are equivalent.
	The conclusion then follows from Theorem~\ref{thm:gl2gp}.
\end{proof}

\begin{corollary}
	Let $(A, \cdot)$ be a perm algebra.
	\begin{enumerate}
		\item
		      Let $(V, \frakl, \frakr)$ be a bimodule of $(A, \cdot)$ and $(A \ltimes_{\frakl^*, \frakl^* - \frakr^*} V^*, \bullet)$ be the semi-direct product of $(A, \cdot)$ and $(V^*, \frakl^*, \frakl^* - \frakr^*)$.
		      If $S$ is a balanced $A$-bimodule homomorphism from $(V, \frakl, \frakr)$ to $(A, L_{\cdot}, R_{\cdot})$, and $T: V \to A$ is an extended $\mathcal{O}$-operator of weight $0$ with extension $S$ of mass $(\kappa, 0)$ associated to $(V, \frakl, \frakr)$,
		      then $r^T + \tau(r^T)$ is a symmetric solution of the generalized perm-YBE in $(A \ltimes_{\frakl^*, \frakl^* - \frakr^*} V^*, \bullet)$.

		\item
			  Let $(A \ltimes_{L_{\cdot}^*, L_{\cdot}^* - R_{\cdot}^*} A^*, \bullet)$ be the semi-direct product of $(A, \cdot)$ and $(A^*, L_{\cdot}^*, L_{\cdot}^* - R_{\cdot}^*)$.
		      If $T: A \rightarrow A$ is an extended $\mathcal{O}$-operator of weight 0 with extension $\id: A \rightarrow A$ of mass $(\kappa, 0)$ associated to $(A, L_{\cdot}, R_{\cdot})$,
		      then $r^T + \tau(r^T)$ is a symmetric solution of the generalized perm-YBE in $(A \ltimes_{L_{\cdot}^*, L_{\cdot}^* - R_{\cdot}^*} A^*, \bullet)$.
	\end{enumerate}
\end{corollary}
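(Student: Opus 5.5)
Both parts will be derived from Proposition~\ref{prop:ex2gp}, specialized to $\lambda = 0$ and $\gamma = 0$. This specialization works because every condition \eqref{eq:eogp1}--\eqref{eq:eogp3} carries a factor $\lambda$, so all of them hold trivially once $\lambda = 0$.

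For part (1), I will regard the bimodule $(V, \frakl, \frakr)$ as an $A$-bimodule perm algebra with trivial product $\cdot_V = 0$, as agreed in the example following the semi-direct product proposition. In that case the defining identity \eqref{eq:exto} for an extended $\mathcal{O}$-operator of weight $0$ with extension $S$ of mass $(\kappa, 0)$ is exactly the hypothesis on $T$. Since $S$ is assumed to be a balanced $A$-bimodule homomorphism, Proposition~\ref{prop:ex2gp} applies. It tells us that $r^T + \tau(r^T)$ is a symmetric solution of the generalized perm-YBE in $(A \ltimes_{\frakl^*, \frakl^* - \frakr^*} V^*, \bullet)$ if and only if \eqref{eq:eogp1}--\eqref{eq:eogp3} hold. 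With $\lambda = 0$ these equations reduce to $0 = 0$, so the conclusion follows at once. Symmetry of $r^T + \tau(r^T)$ is automatic.

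For part (2), I will take $V = A$ with the bimodule $(A, L_\cdot, R_\cdot)$ and $S = \id$, and then apply part (1). The example in Section~\ref{sec:extended-o} already records that $\id$ is a balanced $A$-bimodule homomorphism from $(A, L_\cdot, R_\cdot)$ to itself. The only point to confirm is the identification of the semi-direct product: with $\frakl = L_\cdot$ and $\frakr = R_\cdot$ we get $\frakl^* = L_\cdot^*$ and $\frakl^* - \frakr^* = L_\cdot^* - R_\cdot^*$, so the ambient algebra is $A \ltimes_{L_\cdot^*, L_\cdot^* - R_\cdot^*} A^*$, as stated.

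The only possible obstacle is bookkeeping: checking that the trivial-product convention makes Proposition~\ref{prop:ex2gp} applicable verbatim. This holds because $\cdot_V = 0$ is trivially a perm algebra compatible with \eqref{eq:pmc}--\eqref{eq:pmd}, since both sides of those identities vanish. Beyond that, no computation is needed.
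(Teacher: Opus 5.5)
Your proposal is correct and matches the paper's proof: the paper also regards $(V,\frakl,\frakr)$ as an $A$-bimodule perm algebra with trivial product and derives the corollary directly from Proposition~\ref{prop:ex2gp}, whose conditions \eqref{eq:eogp1}--\eqref{eq:eogp3} become vacuous here. Your explicit checks for part (2) — that $\id$ is a balanced $A$-bimodule homomorphism and that the semi-direct product becomes $A \ltimes_{L_\cdot^*, L_\cdot^* - R_\cdot^*} A^*$ — spell out details the paper leaves implicit.
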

\begin{proof}
	Interpreting a bimodule of $(A, \cdot)$ as an $A$-bimodule perm algebra endowed with the trivial perm algebra structure, the conclusion is a direct consequence of Proposition~\ref{prop:ex2gp}.
\end{proof}

%%%%%%%%%%%%%%%%%%%%%%%%%%%%%%%%%%%%%%%%%%%%%%%%%%%%%%%%%%%%%%%%%%%%%%%%%%%%%%%%
%%%%%%%%%%%%%%%%%%%%%%%%%%%%%%%%%%%%%%%%%%%%%%%%%%%%%%%%%%%%%%%%%%%%%%%%%%%%%%%%
%%%%%%%%%%%%%%%%%%%%%%%%%%%%%%%%%%%%%%%%%%%%%%%%%%%%%%%%%%%%%%%%%%%%%%%%%%%%%%%%
\section{Tensor forms of extended \texorpdfstring{$\mathcal{O}$}{O}-operators and extended perm Yang-Baxter equation}\label{sec:tensor}
In this section, we introduce the notion of the extended perm Yang-Baxter equations, which generalize the notion of the perm Yang-Baxter equations and turn out to be a special case of the generalized perm Yang-Baxter equation under the $(R, \ad)$-invariant condition. 
The relationship between the extended perm Yang-Baxter equations and extended $\mathcal{O}$-operators is investigated. 
In particular, the connection between the perm Yang-Baxter equations and extended $\mathcal{O}$-operators is established.

%%%%%%%%%%%%%%%%%%%%%%%%%%%%%%%%%%%%%%%%%%%%%%%%%%%%%%%%%%%%%%%%%%%%%%%%%%%%%%%%
\subsection{Extended perm Yang-Baxter equation}
\label{ss:extended-perm-YBE}

\begin{definition}\label{def:extended-perm-YBE}
	Let $(A, \cdot)$ be a perm algebra and $\epsilon \in \mathbf{k}$.
	We say that $r\in A \otimes A$ is a solution of the \textbf{extended perm Yang-Baxter equation (extended perm-YBE)} of mass $\epsilon$ in $(A, \cdot)$ if
	\begin{equation}
		\mathbf{A}(r)  = r_{13} \cdot r_{23} - r_{12} \cdot r_{23} + r_{13} \cdot r_{12} - r_{12} \cdot r_{13} = \epsilon ( r_{13} - r_{31}) \cdot ( r_{23} - r_{32} ). \label{eq:extended-perm-YBEa}
	\end{equation}
\end{definition}

\begin{remark}
	When $\epsilon = 0$ or $r$ is symmetric, the extended perm-YBE of mass $\epsilon$ coincides with the perm-YBE~\cite{lin2025infinite}.
\end{remark}

\begin{lemma}\label{lem:lreq}
	Let $(A, \cdot)$ be a perm algebra and $r \in A \otimes A$.
	Writing $r$ as $r = \Lambda + \Theta$ with $\Lambda \in \mathrm{Sym}^2(A)$ and $\Theta \in \mathrm{Alt}^2(A)$.
	Suppose that $\Theta$ is $(R, \ad)$-invariant.
	Then $(A^*, \circ_{+}, L_{\cdot}^*, L_{\cdot}^* - R_{\cdot}^*)$ (resp. $(A^*, \circ_{-}, L_{\cdot}^*, L_{\cdot}^* - R_{\cdot}^*)$) is an $A$-bimodule perm algebra, where $\circ_{+}$ (resp. $\circ_{-}$) is defined by
	\begin{equation}
		x^* \circ_{+} y^*  = - 2 L_\cdot^*(\Theta_{+}(x^*)) y^* \quad\;\;\; \text{(resp. $x^* \circ_{-} y^* = 2 L_\cdot^*(\Theta_{+}(x^*)) y^*$)}. \label{eq:dmd}
	\end{equation}
	Furthermore, the following conditions are equivalent.
	\begin{enumerate}
		\item 
			$\Lambda_{+}$ is an extended $\mathcal{O}$-operator of weight $0$ with extension $\Theta_{+}$ of mass $(-1, 0)$ on $(A, \cdot)$ associated to $(A^*, L_{\cdot}^*, L_{\cdot}^* - R_{\cdot}^*)$.
			
		\item 
			$(A^*, \cdot_r)$ is a perm algebra, and $r_{+}$ (resp. $r_{-}$) is a homomorphism of perm algebras between $(A^*, \cdot_{r})$ and $(A, \cdot)$,
			where $\cdot_{r}$ is defined by Eq.~\eqref{eq:pcbdr}.
			
		\item 
			$r_{+}$ (resp. $r_{-}$) is an $\mathcal{O}$-operator of weight $1$ on $(A, \cdot)$ associated to the $A$-bimodule perm algebra $(A^*, \circ_{+}, L_{\cdot}^*, L_{\cdot}^* - R_{\cdot}^*)$ (resp. $(A^*, \circ_{-}, L_{\cdot}^*, L_{\cdot}^* - R_{\cdot}^*)$).
	\end{enumerate}
\end{lemma}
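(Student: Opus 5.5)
This lemma is the specialization of Corollary~\ref{coro:bah2e} to $V = A^*$ with the bimodule $(A^*, L_{\cdot}^*, L_{\cdot}^* - R_{\cdot}^*)$, $S = \Theta_{+}$ and $T = \Lambda_{+}$. The proof therefore reduces to checking the hypotheses of that corollary and matching notation.

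\emph{Hypotheses.} Since $\Theta$ is skew-symmetric and $(R, \ad)$-invariant, Lemma~\ref{lem:sinvb} shows that $\Theta_{+}$ is both balanced and an $A$-bimodule homomorphism from $(A^*, L_{\cdot}^*, L_{\cdot}^* - R_{\cdot}^*)$ to $(A, L_{\cdot}, R_{\cdot})$. So $\Theta_{+}$ is a balanced $A$-bimodule homomorphism. The first part of Corollary~\ref{coro:bah2e}, with $\frakl = L_{\cdot}^*$, then gives that $(A^*, \circ_{\pm}, L_{\cdot}^*, L_{\cdot}^* - R_{\cdot}^*)$ are $A$-bimodule perm algebras, with $\circ_{\pm}$ exactly as in Eq.~\eqref{eq:dmd}.

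\emph{Dictionary.} Recall $\Lambda_{+} = \frac{1}{2}(r_{+}+r_{-})$ and $\Theta_{+} = \frac{1}{2}(r_{+}-r_{-})$. Hence $r_{+} = \Lambda_{+} + \Theta_{+} = T+S$ and $r_{-} = \Lambda_{+} - \Theta_{+} = T-S$, which correspond to $\pi_{+}$ and $\pi_{-}$ respectively.

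The only remaining point is to identify $\cdot_r$ from Eq.~\eqref{eq:pcbdr} with the product $\circ_T$ of Corollary~\ref{coro:bah2e}, namely
\[
x^* \circ_T y^* = L_{\cdot}^*(\Lambda_{+}(x^*))y^* + (L_{\cdot}^* - R_{\cdot}^*)(\Lambda_{+}(y^*))x^*.
\]
We expand, using $r_{+} = \Lambda_{+}+\Theta_{+}$ and $r_{-} = \Lambda_{+}-\Theta_{+}$:
\[
x^* \cdot_r y^* = L_{\cdot}^*(\Lambda_{+}(x^*))y^* + \ad_{\cdot}^*(\Lambda_{+}(y^*))x^* + \Big( L_{\cdot}^*(\Theta_{+}(x^*))y^* - \ad_{\cdot}^*(\Theta_{+}(y^*))x^* \Big).
\]
Since $\ad_{\cdot}^* = L_{\cdot}^* - R_{\cdot}^*$ is the right action $\frakr$, the bracketed term is $\frakl(S(x^*))y^* - \frakr(S(y^*))x^*$. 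This vanishes by the balanced condition Eq.~\eqref{eq:bal}, exactly as in the computation in the proof of Proposition~\ref{prop:eocad2gp}. Hence $\cdot_r = \circ_T$.

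\emph{Conclusion.} With these identifications, items (1)--(3) of the lemma are precisely items (1)--(3) of Corollary~\ref{coro:bah2e}, for both the $+$ and $-$ versions. The equivalences follow directly from that corollary, which itself comes from Theorem~\ref{thm:opm} with $\lambda = 0$.

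The only step requiring care is the sign and dual bookkeeping. One must check that $\ad_{\cdot}^*$ is indeed $\frakr = L_{\cdot}^* - R_{\cdot}^*$ and that $r_{-} = \Lambda_{+} - \Theta_{+}$, since $\Theta_{-} = -\Theta_{+}$. Once these are confirmed, no further computation is needed.
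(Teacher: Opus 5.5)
Your proposal is correct and follows the paper's own proof. Both arguments use Lemma~\ref{lem:sinvb} to make $\Theta_{+}$ a balanced $A$-bimodule homomorphism, use the balanced condition to identify $\cdot_r$ with $L_{\cdot}^*(\Lambda_{+}(x^*))y^* + \ad_{\cdot}^*(\Lambda_{+}(y^*))x^*$, and then read off all three equivalences from Corollary~\ref{coro:bah2e} with $T=\Lambda_{+}$, $S=\Theta_{+}$ and $r_{\pm}=\Lambda_{+}\pm\Theta_{+}$.
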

\begin{proof}
	By Lemma~\ref{lem:sinvb}, $\Theta_{+}: A^* \rightarrow A$ is a balanced $A$-bimodule homomorphism from $(A^*, L_{\cdot}^*, L_{\cdot}^* - R_{\cdot}^*)$ to $(A, L_{\cdot}, R_{\cdot})$.
	Moreover, for all $x^*, y^* \in A^*$, we have
	\begin{equation*}
		L_{\cdot}^*(\Lambda_{+}(x^*))y^* + \ad_{\cdot}^*(\Lambda_{+}(y^*))x^* = L^*(r_{+}(x^*)) y^* + \ad^*(r_{-}(y^*)) x^* = x^* \cdot_{r} y^*.
	\end{equation*}
	The desired conclusion now follows immediately from Corollary~\ref{coro:bah2e}.
\end{proof}

\begin{theorem}\label{thm:extended-perm-YBE-o}
	Let $(A, \cdot)$ be a perm algebra and $r \in A \otimes A$.
	Writing $r$ as $r = \Lambda + \Theta$ with $\Lambda \in \mathrm{Sym}^2(A)$ and $\Theta \in \mathrm{Alt}^2(A)$.
	Suppose that $\Theta$ is $(R, \ad)$-invariant.
	Then $r$ is a solution of the extended perm-YBE of mass $\frac{\kappa+1}{4}$ in $(A, \cdot)$ if and only if $\Lambda_{+}: A^* \rightarrow A$ is an extended $\mathcal{O}$-operator of weight $0$ with extension $\Theta_{+}$ of mass $(\kappa, 0)$ on $(A, \cdot)$ associated to $(A^*, L_{\cdot}^*, L_{\cdot}^* - R_{\cdot}^*)$.
\end{theorem}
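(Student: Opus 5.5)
The plan is to translate both sides into statements about elements of $A^{\otimes 3}$ (or equivalently trilinear forms on $A^*$) and compare them termwise. Write $r = \Lambda + \Theta$ with $\Theta$ $(R,\ad)$-invariant. The operator identity \eqref{eq:exto} for $T = \Lambda_+$, $S = \Theta_+$, weight $0$ and mass $(\kappa,0)$ on the bimodule $(A^*, L_\cdot^*, L_\cdot^*-R_\cdot^*)$ reads
\begin{equation*}
	\Lambda_+(x^*)\cdot\Lambda_+(y^*) - \Lambda_+\big(L_\cdot^*(\Lambda_+(x^*))y^* + \ad_\cdot^*(\Lambda_+(y^*))x^*\big) = \kappa\, \Theta_+(x^*)\cdot\Theta_+(y^*).
\end{equation*}
I will pair this with an arbitrary $z^* \in A^*$ and express each term as $\langle X, x^*\otimes y^*\otimes z^*\rangle$ for an explicit $X \in A^{\otimes 3}$ in the standard way. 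Thus $\langle \Lambda_+(x^*)\cdot\Lambda_+(y^*), z^*\rangle$ corresponds to a term of the shape $\Lambda_{13}\cdot\Lambda_{23}$ (up to index placement), and the terms with $\Lambda_+(\cdots)$ applied to coactions correspond to $\Lambda_{12}\cdot\Lambda_{23}$-type and $\Lambda_{13}\cdot\Lambda_{12}$, $\Lambda_{12}\cdot\Lambda_{13}$-type terms, using $\Lambda_- = \Lambda_+$. In this way the extended $\mathcal{O}$-operator condition becomes $\mathbf{A}(\Lambda) = \kappa\,\Theta_{13}\cdot\Theta_{23}$ (with signs to be fixed carefully; I expect $\Theta_{13}\cdot\Theta_{23}$ up to a sign, since $\Theta_- = -\Theta_+$).

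Next I expand $\mathbf{A}(r) = \mathbf{A}(\Lambda+\Theta)$ into $\mathbf{A}(\Lambda)$, $\mathbf{A}(\Theta)$, and the mixed terms. The $(R,\ad)$-invariance \eqref{eq:liv} of $\Theta$ lets me move multiplications across tensor legs: $(\id\otimes R(a))\Theta = (\ad(a)\otimes \id)\Theta$. Summing over the components of the other tensor factor, this turns expressions such as $\Lambda_{12}\cdot\Theta_{23}$, $\Theta_{12}\cdot\Lambda_{13}$, $\Lambda_{13}\cdot\Theta_{12}$ into one another. The goal is to show that all mixed terms cancel, which is the tensor analogue of the balanced and bimodule-homomorphism properties of $\Theta_+$ from Lemma~\ref{lem:sinvb}. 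The same identity should reduce $\mathbf{A}(\Theta)$ to a multiple of $\Theta_{13}\cdot\Theta_{23}$, and I expect to get $\mathbf{A}(\Theta) = -\Theta_{13}\cdot\Theta_{23}$ after using $\Theta_{31} = -\Theta_{13}$.

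The right-hand side $\epsilon(r_{13}-r_{31})\cdot(r_{23}-r_{32})$ equals $4\epsilon\,\Theta_{13}\cdot\Theta_{23}$. The extended perm-YBE of mass $\epsilon$ therefore becomes
\begin{equation*}
	\mathbf{A}(\Lambda) - \Theta_{13}\cdot\Theta_{23} = 4\epsilon\,\Theta_{13}\cdot\Theta_{23},
\end{equation*}
that is, $\mathbf{A}(\Lambda) = (4\epsilon+1)\Theta_{13}\cdot\Theta_{23}$. With $\epsilon = \frac{\kappa+1}{4}$ this is $\mathbf{A}(\Lambda) = (\kappa+2)\Theta_{13}\cdot\Theta_{23}$. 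This does not match $\kappa$ unless the signs work out differently, so the precise constants of $\mathbf{A}(\Theta)$ and of the pairing of the $\kappa$-term must be pinned down. I expect that $\mathbf{A}(\Theta) = +\Theta_{13}\cdot\Theta_{23}$ and that the $\kappa$-term pairs to $-\kappa\Theta_{13}\cdot\Theta_{23}$ (or a similar sign combination), and I will fix these by direct computation.

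As a consistency check, the case $\kappa=-1$, $\epsilon=0$ must reproduce Lemma~\ref{lem:lreq}, namely that the perm-YBE is equivalent to the mass $(-1,0)$ condition. This pins down the relative constant, and the remaining $\kappa$-dependence is linear and matches automatically.

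The main obstacle is the bookkeeping of the mixed terms and of $\mathbf{A}(\Theta)$ under the $(R,\ad)$-invariance. The perm identities $L(a\cdot b)=L(a)L(b)=L(b)L(a)$ and the relations $R(a\cdot b)=R(b)R(a)=R(b)L(a)$ must be applied leg by leg. I would organize this by computing $\langle\,\cdot\,, x^*\otimes y^*\otimes z^*\rangle$ of each term via $\Theta_+$ and $\Lambda_+$, and by using the balanced property \eqref{eq:bal} and the bimodule-homomorphism property \eqref{eq:ainv} of $\Theta_+$ from Lemma~\ref{lem:sinvb}, rather than manipulating tensors directly.
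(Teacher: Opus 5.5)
Your method is the same as the paper's. The paper substitutes $r_\pm=\Lambda_+\pm\Theta_+$ into $r_+(x^*)\cdot r_+(y^*)-r_+\big(L_\cdot^*(r_+(x^*))y^*+\ad_\cdot^*(r_-(y^*))x^*\big)$, whose pairing with $z^*$ is $\langle\mathbf{A}(r),x^*\otimes y^*\otimes z^*\rangle$. It then uses the balanced and homomorphism properties of $\Theta_+$ from Lemma~\ref{lem:sinvb} to reduce this to the $\Lambda_+$-expression plus $\Theta_+(x^*)\cdot\Theta_+(y^*)$.

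However, the proposal stops short of a proof. The constants, which are the whole content of the mass $\frac{\kappa+1}{4}$, are left open, and the values you predict are wrong.
\begin{itemize}
\item The extension term carries no sign. Since $\langle\Theta_{13}\cdot\Theta_{23},x^*\otimes y^*\otimes z^*\rangle=\langle\Theta_+(x^*)\cdot\Theta_+(y^*),z^*\rangle$ involves only $\Theta_+$, the operator condition is exactly $\mathbf{A}(\Lambda)=\kappa\,\Theta_{13}\cdot\Theta_{23}$.
\item In $\mathbf{A}(\Theta)$, the three terms $-\Theta_{12}\cdot\Theta_{23}+\Theta_{13}\cdot\Theta_{12}-\Theta_{12}\cdot\Theta_{13}$ pair, using $\Theta_-=-\Theta_+$, to $-\Theta_+\big(L_\cdot^*(\Theta_+(x^*))y^*-\ad_\cdot^*(\Theta_+(y^*))x^*\big)$. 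This vanishes by balancedness, so $\mathbf{A}(\Theta)=+\Theta_{13}\cdot\Theta_{23}$, not $-\Theta_{13}\cdot\Theta_{23}$.
\item The mixed terms pair to $-\Lambda_+\big(L_\cdot^*(\Theta_+(x^*))y^*-\ad_\cdot^*(\Theta_+(y^*))x^*\big)$ plus $\Lambda_+(x^*)\cdot\Theta_+(y^*)+\Theta_+(x^*)\cdot\Lambda_+(y^*)-\Theta_+\big(L_\cdot^*(\Lambda_+(x^*))y^*+\ad_\cdot^*(\Lambda_+(y^*))x^*\big)$. The first vanishes by balancedness and the second by \eqref{eq:ainv}.
\end{itemize}
Hence $\mathbf{A}(r)=\mathbf{A}(\Lambda)+\Theta_{13}\cdot\Theta_{23}$. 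The mass-$\epsilon$ equation becomes $\mathbf{A}(\Lambda)=(4\epsilon-1)\,\Theta_{13}\cdot\Theta_{23}$, and $4\epsilon-1=\kappa$. Your fallback guess ($+\Theta_{13}\cdot\Theta_{23}$ for $\mathbf{A}(\Theta)$ but $-\kappa$ for the extension term) would again fail to match.

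The way you propose to fix the constants does not work either. Lemma~\ref{lem:lreq} says nothing about the perm-YBE. Within the paper, the equivalence of the perm-YBE with the mass $(-1,0)$ condition is Proposition~\ref{prop:peqc}, which is deduced from the very theorem you are proving. Invoking it is therefore circular unless you import it from an independent source.

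Even granting it, the single value $\kappa=-1$ is not enough. It only forces the coefficient $c$ in $\mathbf{A}(r)=\mathbf{A}(\Lambda)+c\,\Theta_{13}\cdot\Theta_{23}$ to equal the coefficient $c'$ with which the extension term pairs. Matching mass $\frac{\kappa+1}{4}$ for every $\kappa$ needs $c=c'=1$, so the $\kappa$-dependence does not ``match automatically''. The direct pairing computation above is what is actually required.
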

\begin{proof}
	For all $x^*, y^*, z^* \in A^*$, note that $\Theta_{+}: A^* \rightarrow A$ is a balance $A$-bimodule homomorphism from $(A^*, L_{\cdot}^*, L_{\cdot}^* - R_{\cdot}^*)$ to $(A, L_{\cdot}, R_{\cdot})$, we have
	\begin{align*}
		 & r_{+}(x^{*}) \cdot r_{+}(y^{*}) - r_{+}(L_{\cdot}^*(r_{+}(x^{*})) y^{*}+\ad_{\cdot}^{*}(r_{-}(y^{*})) x^{*})                                                                        \\
		 & = (\Lambda_{+}+\Theta_{+})(x^{*}) \cdot (\Lambda_{+}+\Theta_{+})(y^{*}) - (\Lambda_{+}+\Theta_{+})(L_{\cdot}^{*}((\Lambda_{+}+\Theta_{+})(x^{*})) y^{*}+\ad_{\cdot}^{*}((\Lambda_{+}-\Theta_{+})(y^{*})) x^{*}) \\
		 & = (\Lambda_{+}+\Theta_{+})(x^{*}) \cdot (\Lambda_{+}+\Theta_{+})(y^{*}) - (\Lambda_{+}+\Theta_{+})(L_{\cdot}^{*}(\Lambda_{+}(x^{*})) y^{*}+\ad_{\cdot}^{*}(\Lambda_{+}(y^{*})) x^{*}) \\
		 & = \Lambda_{+}(x^*) \cdot \Lambda_{+}(y^*) - \Lambda_{+}\big(L_{\cdot}^*(\Lambda_{+}(x^*))y^* + \ad_{\cdot}^*(\Lambda_{+}(y^*))x^*\big) + \Theta_{+}(x^*) \cdot \Theta_{+}(y^*) \\
		 & \quad + \Theta_{+}(x^*) \cdot \Lambda_{+}(y^*) + \Lambda_{+}(x^*) \cdot \Theta_{+}(y^*) - \Theta_{+}(L_{\cdot}^{*}(\Lambda_{+}(x^{*})) y^{*}+\ad_{\cdot}^{*}(\Lambda_{+}(y^{*})) x^{*})  \\
		 & = \Lambda_{+}(x^*) \cdot \Lambda_{+}(y^*) - \Lambda_{+}\big(L_{\cdot}^*(\Lambda_{+}(x^*))y^* + \ad_{\cdot}^*(\Lambda_{+}(y^*))x^*\big) + \Theta_{+}(x^*) \cdot \Theta_{+}(y^*).
	\end{align*}
	Therefore,
	\begin{align*}
		 & \langle \Lambda_{+}(x^*) \cdot \Lambda_{+}(y^*) - \Lambda_{+}\big(L_{\cdot}^*(\Lambda_{+}(x^*))y^* + \ad_{\cdot}^*(\Lambda_{+}(y^*))x^*\big) - \kappa \Theta_{+}(x^*) \cdot \Theta_{+}(y^*), z^* \rangle \\
		 & =\langle r_{+}(x^{*}) \cdot r_{+}(y^{*}) - r_{+}(L_{\cdot}^{*}(r_{+}(x^{*})) y^{*}+\ad_{\cdot}^{*}(r_{-}(y^{*})) x^{*}) - (\kappa+1) \Theta_{+}(x^*) \cdot \Theta_{+}(y^*), z^*\rangle                             \\
		 & =\langle r_{13} \cdot r_{23} - r_{12} \cdot r_{23} + r_{13} \cdot r_{12} - r_{12} \cdot r_{13} - \frac{(\kappa+1)}{4}( r_{13} - r_{31}) \cdot ( r_{23} - r_{32} ), x^* \otimes y^* \otimes z^*\rangle.
	\end{align*}
	Hence, $r$ is a solution of the extended perm-YBE of mass $\frac{\kappa+1}{4}$ in $(A, \cdot)$ if and only if $\Lambda_{+}: A^* \rightarrow A$ is an extended $\mathcal{O}$-operator of weight $0$ with extension $\Theta_{+}$ of mass $(\kappa, 0)$ on $(A, \cdot)$ associated to $(A^*, L_{\cdot}^*, L_{\cdot}^* - R_{\cdot}^*)$.
\end{proof}

\begin{corollary}\label{coro:trr}
	Let $(A, \cdot)$ be a perm algebra and $r \in A \otimes A$ with $(R, \ad)$-invariant skew-symmetric part.
	Then $r$ is a solution of the extended perm-YBE of mass $\epsilon$ in $(A, \cdot)$ if and only if $\tau(r)$ is a solution of the extended perm-YBE of mass $\epsilon$ in $(A, \cdot)$.
\end{corollary}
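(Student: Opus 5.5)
The plan is to reduce the corollary to Theorem~\ref{thm:extended-perm-YBE-o}, using the fact that the criterion there only involves the symmetric part $\Lambda$ and the map $\Theta_{+}$ through a quadratic term. Write $r = \Lambda + \Theta$ with $\Lambda \in \mathrm{Sym}^2(A)$ and $\Theta \in \mathrm{Alt}^2(A)$. Then $\tau(r) = \Lambda + (-\Theta)$, so $\tau(r)$ has symmetric part $\Lambda$ and skew-symmetric part $-\Theta$. The condition \eqref{eq:liv} is linear in $r$, so $-\Theta$ is $(R,\ad)$-invariant whenever $\Theta$ is. Hence Theorem~\ref{thm:extended-perm-YBE-o} applies to both $r$ and $\tau(r)$.

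Given $\epsilon$, set $\kappa := 4\epsilon - 1$, so that $\epsilon = \frac{\kappa+1}{4}$. By Theorem~\ref{thm:extended-perm-YBE-o}, $r$ solves the extended perm-YBE of mass $\epsilon$ if and only if, for all $x^*, y^* \in A^*$,
\begin{equation*}
	\Lambda_{+}(x^*) \cdot \Lambda_{+}(y^*) - \Lambda_{+}\big(L_{\cdot}^*(\Lambda_{+}(x^*))y^* + \ad_{\cdot}^*(\Lambda_{+}(y^*))x^*\big) = \kappa\, \Theta_{+}(x^*) \cdot \Theta_{+}(y^*).
\end{equation*}
The same theorem says $\tau(r)$ solves the equation of mass $\epsilon$ if and only if the same identity holds with $\Theta_{+}$ replaced by $(-\Theta)_{+} = -\Theta_{+}$. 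The right-hand side is quadratic in the extension, so $\kappa (-\Theta_{+}(x^*)) \cdot (-\Theta_{+}(y^*)) = \kappa\, \Theta_{+}(x^*) \cdot \Theta_{+}(y^*)$. The left-hand side depends only on $\Lambda$. So the two conditions coincide, and the equivalence follows.

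There is no substantial obstacle. The only points to check are that $(-\Theta)_{+} = -\Theta_{+}$, which is immediate from the definition of $(\cdot)_{+}$, and that invariance passes to $-\Theta$. As an alternative check, one could verify directly from \eqref{eq:extended-perm-YBEa} that the mass term $(r_{13}-r_{31})\cdot(r_{23}-r_{32})$ is unchanged under $r \mapsto \tau(r)$, since each factor changes sign. However, $\mathbf{A}(r)$ itself is not obviously symmetric under $\tau$, which is why I route the argument through Theorem~\ref{thm:extended-perm-YBE-o}.
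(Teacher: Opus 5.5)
Your proposal is correct and follows essentially the same route as the paper. Both arguments write $\tau(r)=\Lambda-\Theta$, note that $-\Theta$ is again $(R,\ad)$-invariant, and observe that the extended $\mathcal{O}$-operator condition of mass $(\kappa,0)$ is unchanged under $\Theta_{+}\mapsto-\Theta_{+}$ because the extension enters only quadratically; both then conclude via Theorem~\ref{thm:extended-perm-YBE-o}, and your explicit choice $\kappa=4\epsilon-1$ just makes precise a step the paper leaves implicit.
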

\begin{proof}
	Write $r$ as $r = \Lambda + \Theta$, where $\Lambda \in \mathrm{Sym}^2(A)$ and $\Theta \in \mathrm{Alt}^2(A)$.
	Since $\Theta$ is $(R, \ad)$-invariant, so is $-\Theta$.
	Note that $\Lambda_{+}: A^* \rightarrow A$ is an extended $\mathcal{O}$-operator of weight $0$ with extension $\Theta_{+}$ of mass $(\kappa, 0)$ on $(A, \cdot)$ associated to $(A^*, L_{\cdot}^*, L_{\cdot}^* - R_{\cdot}^*)$ if and only if $\Lambda_{+}: A^* \rightarrow A$ is an extended $\mathcal{O}$-operator of weight $0$ with extension $-\Theta_{+}$ of mass $(\kappa, 0)$ on $(A, \cdot)$ associated to $(A^*, L_{\cdot}^*, L_{\cdot}^* - R_{\cdot}^*)$.
	Hence, by Theorem~\ref{thm:extended-perm-YBE-o}, this is equivalent to both $r$ and $\tau(r) = \Lambda - \Theta$ being solutions of the extended perm-YBE of mass $\frac{\kappa+1}{4}$ in $(A, \cdot)$, respectively.
	The proof is complete.
\end{proof}

\begin{corollary}
	Let $(A, \cdot)$ be a perm algebra and $r \in A \otimes A$.
	If $r$ is a solution of the extended perm-YBE of mass $\epsilon$ in $(A, \cdot)$ whose skew-symmetric part is $(R, \ad)$-invariant, then $r$ is a solution of the generalized perm-YBE in $(A, \cdot)$.
\end{corollary}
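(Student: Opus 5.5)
The plan is to chain Theorem~\ref{thm:extended-perm-YBE-o} with Proposition~\ref{prop:eocad2gp}. First I decompose $r = \Lambda + \Theta$ with $\Lambda \in \mathrm{Sym}^2(A)$ and $\Theta \in \mathrm{Alt}^2(A)$. By hypothesis, $\Theta$ is $(R, \ad)$-invariant. Then I choose $\kappa \in \mathbf{k}$ with $\frac{\kappa+1}{4} = \epsilon$, namely $\kappa = 4\epsilon - 1$. This is always possible because $\operatorname{char}\mathbf{k} = 0$.

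With this choice, $r$ is a solution of the extended perm-YBE of mass $\frac{\kappa+1}{4}$. Theorem~\ref{thm:extended-perm-YBE-o} therefore shows that $\Lambda_{+}: A^* \to A$ is an extended $\mathcal{O}$-operator of weight $0$ with extension $\Theta_{+}$ of mass $(\kappa, 0)$ on $(A, \cdot)$, associated to $(A^*, L_{\cdot}^*, L_{\cdot}^* - R_{\cdot}^*)$.

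These are exactly the hypotheses of Proposition~\ref{prop:eocad2gp}: $\Theta$ is $(R,\ad)$-invariant, and $\Lambda_{+}$ is an extended $\mathcal{O}$-operator of weight $0$ with extension $\Theta_{+}$ of mass $(\kappa,0)$. That proposition gives that $(A^*, \Delta^*) = (A^*, \cdot_r)$ is a perm algebra and that $r$ is a solution of the generalized perm-YBE in $(A, \cdot)$. This completes the argument.

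There is no real computational obstacle; the only point needing care is checking that the hypotheses line up. Proposition~\ref{prop:eocad2gp} relies on Lemma~\ref{lem:sinvb}, which makes $\Theta_{+}$ balanced and lets one replace $r_{\pm}$ by $\Lambda_{+}$ in $\cdot_r$. It then relies on Theorem~\ref{thm:eo2ps} and Lemma~\ref{lem:gped}. All of these need only the $(R,\ad)$-invariance of $\Theta$, which is assumed. If one wanted to bypass the proposition, the alternative route would be to apply Theorem~\ref{thm:eo2ps} directly to $\Lambda_{+}$ with the balanced map $\Theta_{+}$, conclude that $(A^*, \cdot_r)$ is perm, and then invoke Lemma~\ref{lem:gped}.
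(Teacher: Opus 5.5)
Your proposal is correct and is exactly the paper's proof. You decompose $r=\Lambda+\Theta$ and take $\kappa=4\epsilon-1$, so that Theorem~\ref{thm:extended-perm-YBE-o} makes $\Lambda_{+}$ an extended $\mathcal{O}$-operator of weight $0$ with extension $\Theta_{+}$ of mass $(4\epsilon-1,0)$; you then conclude with Proposition~\ref{prop:eocad2gp}, just as the paper does.

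One small quibble: defining $\kappa=4\epsilon-1$ needs no assumption on the field. The characteristic hypothesis is only needed inside Theorem~\ref{thm:extended-perm-YBE-o} itself, which divides by $2$ and $4$.
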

\begin{proof}
	Write $r$ as $r = \Lambda + \Theta$, where $\Lambda \in \mathrm{Sym}^2(A)$ and $\Theta \in \mathrm{Alt}^2(A)$.
	Since $\Theta$ is $(R, \ad)$-invariant, Theorem~\ref{thm:extended-perm-YBE-o} implies that $\Lambda_{+}: A^* \rightarrow A$ is an extended $\mathcal{O}$-operator of weight $0$ with extension $\Theta_{+}$ of mass $(4\epsilon - 1, 0)$ on $(A, \cdot)$ associated to $(A^*, L_\cdot^*, L_\cdot^* - R_\cdot^*)$.
	Proposition~\ref{prop:eocad2gp} then shows that $r$ is a solution of the generalized perm-YBE in $(A, \cdot)$.
\end{proof}

\begin{proposition}\label{prop:peqc}
	Let $(A, \cdot)$ be a perm algebra and $r \in A \otimes A$.
	Writing $r$ as $r = \Lambda + \Theta$ with $\Lambda \in \mathrm{Sym}^2(A)$ and $\Theta \in \mathrm{Alt}^2(A)$.
	Suppose that $\Theta$ is $(R, \ad)$-invariant.
	Then the following conditions are equivalent.
	\begin{enumerate}
		\item\label{it:reqd1}
		      $r$ is a solution of the perm-YBE in $(A, \cdot)$

		\item\label{it:reqd2}
		      $\Lambda_{+}$ is an extended $\mathcal{O}$-operator of weight $0$ with extension $\Theta_{+}$ of mass $(-1, 0)$ associated to $(A^*, L_{\cdot}^*, L_{\cdot}^* - R_{\cdot}^*)$.

		\item\label{it:reqd3}
		      $r_{+}$ (resp. $r_{-}$) is an $\mathcal{O}$-operator of weight $1$ on $(A, \cdot)$ associated to the $A$-bimodule perm algebra $(A^*, \circ_{+}, L_{\cdot}^*, L_{\cdot}^* - R_{\cdot}^*)$ (resp. $(A^*, \circ_{-}, L_{\cdot}^*, L_{\cdot}^* - R_{\cdot}^*)$), where $\circ_{+}$ (resp. $\circ_{-}$) is defined by Eq.~\eqref{eq:dmd}.

		\item\label{it:reqd4}
		      $(A^*, \cdot_r)$ is a perm algebra, where $\cdot_r$ is defined by Eq.~\eqref{eq:pcbdr}, and $r_{+}$ (resp. $r_{-}$) is homomorphism from $(A^*, \cdot_r)$ to $(A, \cdot)$.
	\end{enumerate}
\end{proposition}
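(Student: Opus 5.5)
The plan is to obtain this as a specialization of Theorem~\ref{thm:extended-perm-YBE-o} combined with Lemma~\ref{lem:lreq}, taking $\kappa=-1$ throughout.

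First, for (\ref{it:reqd1}) $\Longleftrightarrow$ (\ref{it:reqd2}), observe that with $\kappa=-1$ the mass $\frac{\kappa+1}{4}$ equals $0$. By Definition~\ref{def:extended-perm-YBE}, the extended perm-YBE of mass $0$ is exactly $\mathbf{A}(r)=0$, i.e.\ the perm-YBE. Since $\Theta$ is $(R,\ad)$-invariant, Theorem~\ref{thm:extended-perm-YBE-o} applies verbatim. It says $r$ solves the extended perm-YBE of mass $0$ if and only if $\Lambda_{+}$ is an extended $\mathcal{O}$-operator of weight $0$ with extension $\Theta_{+}$ of mass $(-1,0)$ associated to $(A^*, L_{\cdot}^*, L_{\cdot}^*-R_{\cdot}^*)$. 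This is precisely (\ref{it:reqd2}).

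Next, (\ref{it:reqd2}) $\Longleftrightarrow$ (\ref{it:reqd3}) $\Longleftrightarrow$ (\ref{it:reqd4}) is the content of Lemma~\ref{lem:lreq}, whose hypotheses coincide with ours. Condition (1) of that lemma is (\ref{it:reqd2}), condition (3) is (\ref{it:reqd3}), and condition (2) is (\ref{it:reqd4}), reading ``homomorphism between'' as ``homomorphism from $(A^*,\cdot_r)$ to $(A,\cdot)$''. In each case the ``resp.'' versions correspond, with $r_{+}$ paired with $\circ_{+}$ and $r_{-}$ paired with $\circ_{-}$.

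The only point needing care is the identification $r_{+}=\Lambda_{+}+\Theta_{+}$ and $r_{-}=\Lambda_{+}-\Theta_{+}$. These are exactly the maps $\pi_{\pm}=T\pm S$ with $T=\Lambda_{+}$ and $S=\Theta_{+}$ used in Corollary~\ref{coro:bah2e}, which underlies Lemma~\ref{lem:lreq}. By Lemma~\ref{lem:sinvb}, $\Theta_{+}$ is a balanced $A$-bimodule homomorphism, as that corollary requires. Also $\circ_{T}$ with $T=\Lambda_{+}$ equals $\cdot_r$, a fact already checked in the proof of Lemma~\ref{lem:lreq}.

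I therefore expect no real obstacle. The main thing to verify is the bookkeeping of the ``resp.'' clauses: the $r_{-}$ case corresponds to mass $(-1,-\lambda)$ with $\lambda=0$, which again equals $(-1,0)$. Hence both the $+$ and $-$ versions are equivalent to the same condition (\ref{it:reqd2}), and the chain of equivalences closes.
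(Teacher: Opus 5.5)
Your proposal is correct and follows the paper's proof essentially verbatim. Setting $\kappa=-1$ in Theorem~\ref{thm:extended-perm-YBE-o} gives (1)$\Leftrightarrow$(2), since the extended perm-YBE of mass $0$ is the perm-YBE, and Lemma~\ref{lem:lreq} gives (2)$\Leftrightarrow$(3)$\Leftrightarrow$(4) directly. Your extra bookkeeping is accurate and only makes explicit what the paper leaves implicit: $r_\pm=\Lambda_+\pm\Theta_+$, $\Theta_+$ is a balanced bimodule homomorphism by Lemma~\ref{lem:sinvb}, and the mass $(-1,-\lambda)$ of the ``resp.'' case reduces to $(-1,0)$ at $\lambda=0$.
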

\begin{proof}
	(\ref{it:reqd1}) $\Longleftrightarrow$ (\ref{it:reqd2}).
	Note that $r$ is a solution of the perm-YBE in $(A, \cdot)$ if and only if $r$ is a solution of the extended perm-YBE of mass $0$ in $(A, \cdot)$.
	Then by Theorem~\ref{thm:extended-perm-YBE-o}, we show that $r$ is a solution of the perm-YBE in $(A, \cdot)$ if and only if $\Lambda_{+}$ is an extended $\mathcal{O}$-operator of weight $0$ with extension $\Theta_{+}$ of mass $(-1, 0)$ associated to $(A^*, L_{\cdot}^*, L_{\cdot}^* - R_{\cdot}^*)$.

	(\ref{it:reqd2}) $\Longleftrightarrow$ (\ref{it:reqd3}) $\Longleftrightarrow$ (\ref{it:reqd4}).
	It is a consequent conclusion of Lemma~\ref{lem:lreq}.
\end{proof}

\begin{remark}
	The equivalence stated in (\ref{it:reqd1}) and (\ref{it:reqd4}) was shown in \cite[Proposition~ 2.10]{lin2026quasi} previously, we provide an alternative recovery of it based on extended $\mathcal{O}$-operators.
\end{remark}

\begin{remark}
	In particular, if $r$ is symmetric, i.e., $\Theta=0$, then $r$ is a solution of the perm-YBE in $(A, \cdot)$ if and only if $r_{+}$ is an $\mathcal{O}$-operator of weight 0 on $(A, \cdot)$ associated to $(A^*, L_{\cdot}^*, L_{\cdot}^* - R_{\cdot}^*)$.
\end{remark}

\begin{corollary}
	Let $(A, \cdot)$ be a perm algebra and $r \in A \otimes A$ be a solution of the perm-YBE in $(A, \cdot)$ whose skew-symmetric part $\Theta$ is $(R, \ad)$-invariant.
	Define the following binary operations for all $x^*, y^* \in A^*$:
	\begin{align*}
		x^* \circ y^* =  - 2 L_\cdot^*(\Theta_{+}(x^*)) y^*, \;\;
		x^* \succ y^* = L_\cdot^*(r_{+}(x^*))y^*, \;\;
		x^* \prec y^* = R_\cdot^*(r_{+}(y^*))x^*.
	\end{align*}
	Then $(A^*, \circ, \succ, \prec)$ is a post-perm algebra.
	In particular, if $r_{+}$ is invertible, then there exists a post-perm algebra structure defined on $A$, whose associated perm algebra is precisely $(A, \cdot)$.
\end{corollary}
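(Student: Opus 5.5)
The corollary follows by chaining Proposition~\ref{prop:peqc} with Theorem~\ref{thm:o2pp} and Proposition~\ref{prop:ippeq}.

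First, since $r$ solves the perm-YBE and $\Theta$ is $(R,\ad)$-invariant, the implication (\ref{it:reqd1}) $\Rightarrow$ (\ref{it:reqd3}) of Proposition~\ref{prop:peqc} shows that $r_{+}$ is an $\mathcal{O}$-operator of weight $1$ on $(A,\cdot)$. It is associated to the $A$-bimodule perm algebra $(A^*, \circ_{+}, L_\cdot^*, L_\cdot^*-R_\cdot^*)$, where $x^*\circ_{+}y^* = -2L_\cdot^*(\Theta_{+}(x^*))y^*$. That this quadruple is an $A$-bimodule perm algebra is guaranteed by Lemma~\ref{lem:lreq}.

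Next, we apply Theorem~\ref{thm:o2pp} with $V=A^*$, $\cdot_V=\circ_{+}$, $\frakl=L_\cdot^*$, $\frakr=L_\cdot^*-R_\cdot^*$, $T=r_{+}$ and $\lambda=1$. This yields a post-perm algebra with operations
\begin{equation*}
x^*\circ y^* = x^*\circ_{+}y^*, \quad x^*\succ y^* = L_\cdot^*(r_{+}(x^*))y^*, \quad x^*\prec y^* = (L_\cdot^*-R_\cdot^*)(r_{+}(y^*))x^*.
\end{equation*}
The first two agree with the operations in the statement. The third is the only place needing care. The theorem's right action is $\frakr=L^*_\cdot-R^*_\cdot=\ad_\cdot^*$, which matches the form of $\cdot_r$ in Eq.~\eqref{eq:pcbdr}. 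The statement, however, writes $R_\cdot^*(r_{+}(y^*))x^*$.

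I would reconcile the two by reading the stated $R_\cdot^*$ as the dual of the right action of the bimodule, i.e.\ as $\frakr=L^*_\cdot-R^*_\cdot$. If that reading is not intended, I would check whether the balanced/invariance identities for $\Theta$ convert one expression into the other. They probably do not, since $\Lambda_{+}$ is unconstrained, so the notational interpretation is the likely resolution. This identification of $\prec$ is the main obstacle; everything else is a direct citation.

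Finally, suppose $r_{+}$ is invertible. Then $r_{+}$ is an invertible $\mathcal{O}$-operator of weight $1$ on $(A,\cdot)$ associated to an $A$-bimodule perm algebra. The converse direction of Proposition~\ref{prop:ippeq} then transports the post-perm structure to $A$ via $r_{+}$, using the explicit formulas $a\circ_T b = r_{+}(r_{+}^{-1}(a)\circ_{+} r_{+}^{-1}(b))$ and so on. The associated perm algebra of this structure is exactly $(A,\cdot)$, which completes the proof.
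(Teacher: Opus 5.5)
Your proposal is correct and follows the paper's argument exactly. Proposition~\ref{prop:peqc} makes $r_{+}$ an $\mathcal{O}$-operator of weight $1$ associated to $(A^*, \circ_{+}, L_\cdot^*, L_\cdot^*-R_\cdot^*)$, and Theorem~\ref{thm:o2pp} together with Proposition~\ref{prop:ippeq} finish the proof.

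Your suspicion about $\prec$ is also right. Theorem~\ref{thm:o2pp} yields $x^*\prec y^*=\ad_\cdot^*(r_{+}(y^*))x^*$, and the literal $R_\cdot^*$ version genuinely fails. For $A=\mathbf{k}$ with $e\cdot e=e$ and $r=c\,e\otimes e$, $c\neq 0$, the first post-perm axiom would read $2c^2e^*=c^2e^*$. So the statement must be read with $L_\cdot^*-R_\cdot^*$, as you propose.
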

\begin{proof}
	By Proposition~\ref{prop:peqc}, $r_{+}$ is an $\mathcal{O}$-operator of weight $1$ on $(A, \cdot)$ associated to the $A$-bimodule perm algebra $(A^*, \circ_{+}, L_{\cdot}^*, L_{\cdot}^* - R_{\cdot}^*)$, where $\circ_{+}$ is defined by Eq.~\eqref{eq:dmd}.
	The conclusions now follow from Theorem~\ref{thm:o2pp} and Proposition~\ref{prop:ippeq}.
\end{proof}

%%%%%%%%%%%%%%%%%%%%%%%%%%%%%%%%%%%%%%%%%%%%%%%%%%%%%%%%%%%%%%%%%%%%%%%%%%%%%%%%
\subsection{Extended \texorpdfstring{$\mathcal{O}$}{O}-operators and extended perm-YBE on quadratic perm algebras}\label{sec:quadratic}

\begin{definition}
	A \textbf{quadratic perm algebra} is a triple $(A, \cdot, \mathcal{B})$, where $(A, \cdot)$ is a perm algebra and $\mathcal{B}$ is a nondegenerate skew-symmetric bilinear form on $A$ such that
	\begin{equation*}
		\mathcal{B}(a \cdot b, c) = \mathcal{B}(a, b \cdot c - c \cdot b), \;\;
		\forall a, b, c \in A.
	\end{equation*}
\end{definition}
For a quadratic perm algebra $(A, \cdot, \mathcal{B})$, the bilinear form satisfies $\mathcal{B}(a \cdot b, c ) = \mathcal{B}(b, a \cdot c)$ for all $a, b, c \in A$.

\begin{definition}
	Let $A$ be a vector space and $\mathcal{B}$ be a nondegenerate bilinear form on $A$.
	A linear map $T: A \rightarrow A$ is called {\bf self-adjoint} (resp. {\bf skew-adjoint}) with respect to $\mathcal{B}$ if for all $a, b \in A$
	\begin{equation*}
		\mathcal{B}(T(a), b) = \mathcal{B}(a, T(b)) \quad \text{(resp. $\mathcal{B}(T(a), b) = -\mathcal{B}(a, T(b))$)}.
	\end{equation*}
\end{definition}

Let $A$ be a vector space and $\mathcal{B}$ be a nondegenerate  bilinear form on $A$.
Denote by $I_\mathcal{B}: A^* \rightarrow A$ the induced linear isomorphism defined by
\begin{equation*}
	\langle I_\mathcal{B}^{-1}(a), b\rangle := \mathcal{B}(a, b), \quad \forall a,b \in A. 
\end{equation*}
Moreover, denote by $r_\mathcal{B}\in A \otimes A $ the 2-tensor form of $I_\mathcal{B}$,  that is,
\begin{equation*}
	\langle r_\mathcal{B}, x^* \otimes y^*\rangle := \langle   I_\mathcal{B}(x^*),y^*\rangle, \quad \forall  x^*,y^* \in A^*. 
\end{equation*}

\begin{lemma}\label{lem:sa2s}
	Let $(A, \cdot, \mathcal{B})$ be a quadratic perm algebra, and $T: A \to A$ be a linear map. 
	Then $T$ is self-adjoint (resp. skew-adjoint) with respect to $\mathcal{B}$ if and only if the 2-tensor form of $T I_\mathcal{B}: A^* \rightarrow A$ is skew-symmetric (resp. symmetric).
\end{lemma}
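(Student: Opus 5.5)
The plan is to compute the 2-tensor form of $T I_\mathcal{B}$ explicitly and compare it with its flip. Let $r \in A \otimes A$ denote the 2-tensor form of $T I_\mathcal{B}$, so that $\langle r, x^* \otimes y^* \rangle = \langle T I_\mathcal{B}(x^*), y^* \rangle$ for all $x^*, y^* \in A^*$. Since $I_\mathcal{B}$ is an isomorphism, we may write $x^* = I_\mathcal{B}^{-1}(a)$ and $y^* = I_\mathcal{B}^{-1}(b)$ with $a, b \in A$ arbitrary. By the definition of $I_\mathcal{B}$,
\begin{equation*}
	\langle r, x^* \otimes y^* \rangle = \langle I_\mathcal{B}^{-1}(b), T(a) \rangle = \mathcal{B}(b, T(a)) = -\mathcal{B}(T(a), b),
\end{equation*}
where the last equality uses the skew-symmetry of $\mathcal{B}$.

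Next I compute the flipped tensor in the same way:
\begin{equation*}
	\langle \tau(r), x^* \otimes y^* \rangle = \langle r, y^* \otimes x^* \rangle = \langle I_\mathcal{B}^{-1}(a), T(b) \rangle = \mathcal{B}(a, T(b)).
\end{equation*}
Therefore $r = -\tau(r)$ (skew-symmetric) holds if and only if $\mathcal{B}(T(a), b) = \mathcal{B}(a, T(b))$ for all $a, b \in A$, which is exactly self-adjointness. Likewise, $r = \tau(r)$ (symmetric) holds if and only if $-\mathcal{B}(T(a), b) = \mathcal{B}(a, T(b))$, which is exactly skew-adjointness. Since $a, b$ range over all of $A$ precisely when $x^*, y^*$ range over all of $A^*$, both equivalences hold in both directions.

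There is no genuine obstacle here. The only care needed is to track the order of arguments in the convention $\langle I_\mathcal{B}^{-1}(a), b \rangle = \mathcal{B}(a, b)$ and to apply the skew-symmetry of $\mathcal{B}$ exactly once. That single sign is what swaps self-adjoint with skew-symmetric, rather than self-adjoint with symmetric. Neither the perm structure nor the invariance of $\mathcal{B}$ is used.
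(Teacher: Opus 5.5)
Your proposal is correct and follows essentially the same route as the paper. The paper also pairs $r$ and $\tau(r)$ with $x^*\otimes y^*$, rewrites both terms through the defining relation $\langle I_\mathcal{B}^{-1}(a), b\rangle = \mathcal{B}(a,b)$, and applies the skew-symmetry of $\mathcal{B}$ once to turn $r+\tau(r)=0$ into self-adjointness. You additionally carry out the symmetric/skew-adjoint case explicitly, which the paper leaves to ``the same argument''.
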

\begin{proof}
	It suffices to prove the self-adjoint case, as the skew-adjoint case follows by the same argument.
	Let $r$ be the tensor corresponding to $T I_{\mathcal{B}}$, i.e., $r_{+} = T I_{\mathcal{B}}$.
	Then, for all $x^*, y^* \in A^*$,
	\begin{align*}
		&\langle r + \tau(r), x^* \otimes y^* \rangle = \langle y^*, r_{+}(x^*) \rangle + \langle x^*, r_{+}(y^*) \rangle = \mathcal{B}(I_{\mathcal{B}}(y^*), r_{+}(x^*)) + \mathcal{B}(I_{\mathcal{B}}(x^*), r_{+}(y^*)) \\
		&=\mathcal{B}(I_{\mathcal{B}}(y^*), T I_{\mathcal{B}}(x^*)) + \mathcal{B}(I_{\mathcal{B}}(x^*), T I_{\mathcal{B}}(y^*)) = \mathcal{B}(I_{\mathcal{B}}(y^*), T I_{\mathcal{B}}(x^*)) - \mathcal{B}(T I_{\mathcal{B}}(y^*), I_{\mathcal{B}}(x^*)).
	\end{align*}
	Hence, $r$ is skew-symmetric if and only if $T$ is self-adjoint.
\end{proof}

\begin{lemma}\label{lem:eq4q}
	Let $(A, \cdot, \mathcal{B})$ be a quadratic perm algebra and $I_\mathcal{B}$ be the induced linear isomorphism by $\mathcal{B}$.
	Suppose that $S: A \rightarrow A$ is a linear map that is self-adjoint with respect to $\mathcal{B}$.
	Then the following conditions are equivalent.
	\begin{enumerate}
		\item\label{it:eq41}
		      $S$ is balanced associated to $(A, L_{\cdot}, R_{\cdot})$.

		\item\label{it:eq42}
		      $S$ is an $A$-bimodule homomorphism from $(A, L_{\cdot}, R_{\cdot})$ to $(A, L_{\cdot}, R_{\cdot})$.

		\item\label{it:eq43}
		      $P_S = S I_\mathcal{B}: A^* \rightarrow A$ is balanced associated to $(A^*, L_{\cdot}^*, L_{\cdot}^* - R_{\cdot}^*)$.

		\item\label{it:eq44}
		      $P_S = S I_\mathcal{B}: A^* \rightarrow A$ is an $A$-bimodule homomorphism from $(A^*, L_{\cdot}^*, L_{\cdot}^* - R_{\cdot}^*)$ to $(A, L_{\cdot}, R_{\cdot})$.
	\end{enumerate}
\end{lemma}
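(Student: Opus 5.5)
The plan is to show that $I_\mathcal{B}$ intertwines the two bimodule structures, so that (\ref{it:eq41})$\Leftrightarrow$(\ref{it:eq43}) and (\ref{it:eq42})$\Leftrightarrow$(\ref{it:eq44}) become pure transport of structure. It then remains to prove (\ref{it:eq41})$\Leftrightarrow$(\ref{it:eq42}) directly on $A$, using the invariance of $\mathcal{B}$ and the self-adjointness of $S$.

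\emph{Step 1: $I_\mathcal{B}$ is an isomorphism of $A$-bimodules from $(A^*, L_{\cdot}^*, L_{\cdot}^* - R_{\cdot}^*)$ to $(A, L_{\cdot}, R_{\cdot})$.} For $a, b, c \in A$, put $x^* = I_\mathcal{B}^{-1}(a)$.
\begin{itemize}
\item For the left action, $\langle L_\cdot^*(b)x^*, c\rangle = \mathcal{B}(a, b\cdot c)$. By the identity $\mathcal{B}(b\cdot a, c) = \mathcal{B}(a, b\cdot c)$ noted after the definition of quadratic perm algebras, this equals $\langle I_\mathcal{B}^{-1}(b\cdot a), c\rangle$.
\item For the right action, $\langle (L_\cdot^* - R_\cdot^*)(b)x^*, c\rangle = \mathcal{B}(a, b\cdot c - c\cdot b) = \mathcal{B}(a\cdot b, c)$ by the defining invariance, which equals $\langle I_\mathcal{B}^{-1}(a\cdot b), c\rangle$.
\end{itemize}
Hence $I_\mathcal{B}^{-1}L_\cdot(b) = L_\cdot^*(b)I_\mathcal{B}^{-1}$ and $I_\mathcal{B}^{-1}R_\cdot(b) = (L_\cdot^*-R_\cdot^*)(b)I_\mathcal{B}^{-1}$.

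\emph{Step 2: transport (\ref{it:eq41})$\Leftrightarrow$(\ref{it:eq43}) and (\ref{it:eq42})$\Leftrightarrow$(\ref{it:eq44}).} Substitute $x^* = I_\mathcal{B}^{-1}(a)$ and $y^* = I_\mathcal{B}^{-1}(b)$.
\begin{itemize}
\item The balanced condition for $P_S = SI_\mathcal{B}$ reads $L_\cdot^*(S(a))I_\mathcal{B}^{-1}(b) = (L_\cdot^*-R_\cdot^*)(S(b))I_\mathcal{B}^{-1}(a)$. By Step~1 this is $I_\mathcal{B}^{-1}\big(S(a)\cdot b - a\cdot S(b)\big) = 0$, which is exactly Eq.~\eqref{eq:bal} for $S$ on $(A, L_\cdot, R_\cdot)$.
\item Likewise, $P_S$ intertwines the actions on $A^*$ with $L_\cdot, R_\cdot$ if and only if $S$ commutes with $L_\cdot(b)$ and $R_\cdot(b)$, which is Eq.~\eqref{eq:ainv} for $S$.
\end{itemize}
Neither part uses self-adjointness.

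\emph{Step 3: (\ref{it:eq42})$\Rightarrow$(\ref{it:eq41}).} Equation~\eqref{eq:ainv} gives $S(a\cdot b) = a\cdot S(b)$ and $S(a\cdot b) = S(a)\cdot b$. Comparing the two gives $S(a)\cdot b = a\cdot S(b)$, which is balancedness.

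\emph{Step 4: (\ref{it:eq41})$\Rightarrow$(\ref{it:eq42}).} This is where self-adjointness enters, and it is the main step. For all $c$,
\[
\mathcal{B}(S(a\cdot b), c) = \mathcal{B}(a\cdot b, S(c)) = \mathcal{B}(b, a\cdot S(c)) = \mathcal{B}(b, S(a)\cdot c) = \mathcal{B}(S(a)\cdot b, c).
\]
The equalities use, in order, self-adjointness, the identity $\mathcal{B}(a\cdot b,c) = \mathcal{B}(b,a\cdot c)$, balancedness, and that identity again.

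By nondegeneracy, $S(a\cdot b) = S(a)\cdot b$, which is the right-module identity $S(R_\cdot(b)a) = S(a)\cdot b$. Combining with balancedness, $S(a)\cdot b = a\cdot S(b)$, so also $S(a\cdot b) = a\cdot S(b)$, the left-module identity.

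The only delicate point is keeping track of which form of invariance to use at each equality in the displayed chain. The rest is bookkeeping.
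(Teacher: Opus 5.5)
Your proof is correct, but it is organized differently from the paper's.

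\textbf{The paper's route.} The paper proves (1)$\Leftrightarrow$(2), (1)$\Leftrightarrow$(3) and (1)$\Leftrightarrow$(4) by three separate pairing computations against $\mathcal{B}$. Each one expresses the defect of the relevant condition (e.g.\ $\mathcal{B}(S(b\cdot c)-S(b)\cdot c,a)$, or $\langle y^*, P_S(L_\cdot^*(a)x^*)-a\cdot P_S(x^*)\rangle$) as $\mathcal{B}$ applied to a combination of balancedness defects. Self-adjointness of $S$ is used both in the (1)$\Leftrightarrow$(2) computation, including the direction (2)$\Rightarrow$(1), and in the (1)$\Leftrightarrow$(4) computation.

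\textbf{Your route.} You first record that $I_\mathcal{B}$ is a bimodule isomorphism from $(A^*, L_\cdot^*, L_\cdot^*-R_\cdot^*)$ to $(A, L_\cdot, R_\cdot)$. The paper only writes this fact down later, in the proof of Proposition~\ref{prop:ajo2s}. With it, (1)$\Leftrightarrow$(3) and (2)$\Leftrightarrow$(4) become pure transport of structure, valid for arbitrary $S$. Your (2)$\Rightarrow$(1) needs no form at all. Your chain in Step 4 is essentially the paper's second identity.

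\textbf{What each buys.} Your route shows that self-adjointness is needed for exactly one implication, (1)$\Rightarrow$(2), and it replaces the paper's longest computation (for (4)) by a formal consequence of Step 1. The paper's route is more self-contained, giving an explicit formula linking each condition directly to balancedness.
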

\begin{proof}
	Let $x^*, y^* \in A^*$ and $a, b, c \in A$.
	Then the following identities
	\begin{align*}
		 & \mathcal{B}(S(b \cdot c) - b \cdot S(c), a) = \mathcal{B}(b \cdot c, S(a)) - \mathcal{B}(b \cdot S(c), a) \\
		 &=  \mathcal{B}(b, c \cdot S(a) - S(a) \cdot c -  S(c) \cdot a + a \cdot S(c)), \\
		 & \mathcal{B}(S(b \cdot c) - S(b) \cdot c, a) = \mathcal{B}(b \cdot c, S(a)) - \mathcal{B}(S(b) \cdot c, a) =  \mathcal{B}(- b \cdot S(a) +  S(b) \cdot a, c),
	\end{align*}
	imply the equivalence between (\ref{it:eq41}) and (\ref{it:eq42}).
	Similarly, the following computation
	\begin{align*}
		& \langle L_{\cdot}^*(P_S(x^*))y^* - \ad_{\cdot}^*(P_S(y^*))x^*, a\rangle = \langle y^*, P_S(x^*) \cdot a  \rangle - \langle x^*, P_S(y^*) \cdot a - a \cdot P_S(y^*) \rangle                                                                      \\
		& = \mathcal{B}(I_\mathcal{B}(y^*), P_S(x^*) \cdot a) - \mathcal{B}(I_\mathcal{B}(x^*), P_S(y^*) \cdot a - a \cdot P_S(y^*)) \\
		&= \mathcal{B}(SI_\mathcal{B}(x^*) \cdot I_\mathcal{B}(y^*) - I_\mathcal{B}(x^*) \cdot SI_\mathcal{B}(y^*), a) ,
	\end{align*}
	establishes the equivalence between (\ref{it:eq41}) and (\ref{it:eq43}).
	Finally, identities
	\begin{align*}
		 & \langle y^*, P_S(L_{\cdot}^*(a)x^*) - a \cdot P_S(x^*)\rangle = \mathcal{B}( I_\mathcal{B}(y^*), P_S(L_{\cdot}^*(a)x^*)) - \mathcal{B}(I_\mathcal{B}(y^*), a \cdot P_S(x^*)) \\
		 & = \mathcal{B}(P_S(y^*), I_\mathcal{B}(L_{\cdot}^*(a)x^*)) + \mathcal{B}(a \cdot P_S(x^*), I_\mathcal{B}(y^*) ) = -\langle x^*, a \cdot P_S(y^*) \rangle + \mathcal{B}(a \cdot P_S(x^*), I_\mathcal{B}(y^*) ) \\
		 &=-\mathcal{B}(I_\mathfrak{B}(x^*), a \cdot P_S(y^*)) +  \mathcal{B}(a \cdot P_S(x^*), I_\mathcal{B}(y^*) ) \\
		 &= \mathfrak{B}(a, SI_\mathcal{B}(y^*) \cdot I_\mathcal{B}(x^*) - I_\mathcal{B}(x^*) \cdot SI_\mathcal{B}(y^*) - I_\mathcal{B}(y^*) \cdot P_S(x^*) +  P_S(x^*) \cdot I_\mathcal{B}(y^*)), \\
		 & \langle y^*, P_S(\ad_{\cdot}^*(a)x^*) - P_S(x^*) \cdot a \rangle = \mathcal{B}( I_\mathcal{B}(y^*), P_S(\ad_{\cdot}^*(a)x^*)) - \mathcal{B}(I_\mathcal{B}(y^*), P_S(x^*) \cdot a) \\
		 & = \mathcal{B}(P_S(y^*), I_\mathcal{B}(\ad_{\cdot}^*(a)x^*)) - \mathcal{B}(P_S(x^*) \cdot I_\mathcal{B}(y^*), a) = -\langle \ad_{\cdot}^*(a)x^*, P_S(y^*) \rangle - \mathcal{B}(P_S(x^*) \cdot I_\mathcal{B}(y^*), a) \\
		 &= -\mathcal{B}(I_\mathcal{B}(x^*), \ad_{\cdot}(a)(P_S(y^*)) ) - \mathcal{B}(P_S(x^*) \cdot I_\mathcal{B}(y^*), a) = \mathcal{B}(I_\mathcal{B}(x^*) \cdot SI_\mathcal{B}(y^*) - SI_\mathcal{B}(x^*) \cdot I_\mathcal{B}(y^*), a),
	\end{align*}
	show the equivalence between (\ref{it:eq41}) and (\ref{it:eq44}).
	The proof is complete.
\end{proof}

\begin{proposition}\label{prop:ajo2s}
	Let $(A, \cdot, \mathcal{B})$ be a quadratic perm algebra, $I_\mathcal{B}$ be the induced linear isomorphism by $\mathcal{B}$, and $T, S: A \rightarrow A$ be linear maps.
	Then $T$ is an extended $\mathcal{O}$-operator of weight $0$ with extension $S$ of mass $(\kappa, 0)$ on $(A, \cdot)$ associated to $(A, L_{\cdot}, R_{\cdot})$ if and only if $P_T=T I_\mathcal{B}: A^* \rightarrow A$ is an extended $\mathcal{O}$-operator of weight $0$ with extension $P_S = S I_\mathcal{B}: A^* \rightarrow A$ of mass $(\kappa, 0)$ on $(A, \cdot)$ associated to $(A^*, L_{\cdot}^*, L_{\cdot}^* - R_{\cdot}^*)$.	
\end{proposition}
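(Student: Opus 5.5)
The plan is to transport the defining identity through the isomorphism $I_\mathcal{B}$. Everything reduces to two intertwining identities: $I_\mathcal{B}$ should carry the coadjoint-type actions on $A^*$ to the adjoint actions on $A$. For all $a \in A$ and $x^* \in A^*$, I expect
\begin{equation*}
	I_\mathcal{B}(L_\cdot^*(a)x^*) = \pm\, a \cdot I_\mathcal{B}(x^*), \qquad I_\mathcal{B}(\ad_\cdot^*(a)x^*) = \pm\, I_\mathcal{B}(x^*) \cdot a,
\end{equation*}
with signs fixed by the skew-symmetry of $\mathcal{B}$. These should follow by pairing with an arbitrary $b \in A$ and using $\langle I_\mathcal{B}^{-1}(c), b\rangle = \mathcal{B}(c, b)$. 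The invariance identities to use are $\mathcal{B}(a\cdot b, c) = \mathcal{B}(b, a\cdot c)$ for the $L^*$ case and $\mathcal{B}(a \cdot b, c) = \mathcal{B}(a, b\cdot c - c \cdot b)$ for the $\ad^*$ case.

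Concretely, $\langle L_\cdot^*(a)x^*, b\rangle = \langle x^*, a\cdot b\rangle = \mathcal{B}(I_\mathcal{B}(x^*), a \cdot b)$. By the first invariance and skew-symmetry this becomes $\mathcal{B}(a\cdot I_\mathcal{B}(x^*), b)$ up to sign. Similarly, $\langle \ad^*_\cdot(a)x^*, b\rangle = \mathcal{B}(I_\mathcal{B}(x^*), a\cdot b - b\cdot a)$. By skew-symmetry this equals $-\mathcal{B}(a\cdot b - b \cdot a, I_\mathcal{B}(x^*))$, and the defining invariance converts it into $\mathcal{B}(b \cdot \ldots)$, which should identify it with $I_\mathcal{B}(x^*) \cdot a$ up to a common sign. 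I will pin the signs down carefully. The key point is that both intertwinings carry the same sign $\sigma$, a fact already visible in the proof of Lemma~\ref{lem:eq4q}, whose computations are of exactly this type.

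Given the intertwinings, set $x^* = I_\mathcal{B}^{-1}(a)$ and $y^* = I_\mathcal{B}^{-1}(b)$. The left-hand side of Eq.~\eqref{eq:exto} for $P_T$ is
\begin{equation*}
	P_T(x^*) \cdot P_T(y^*) - P_T\big(L_\cdot^*(P_T(x^*))y^* + \ad_\cdot^*(P_T(y^*))x^*\big).
\end{equation*}
Using $P_T = TI_\mathcal{B}$ and the intertwinings, this becomes $T(a)\cdot T(b) - \sigma\, T(T(a)\cdot b + a \cdot T(b))$. The right-hand side is $\kappa S(a)\cdot S(b)$, since the weight and $\gamma$ terms vanish because the bimodule is trivial.

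A mismatch between $\sigma$ and the required sign would be the main obstacle. If $\sigma = -1$ were forced, the statement would need a sign adjustment, for example replacing $T$ by $-T$. I therefore expect, and will verify, that the signs combine to $\sigma = +1$. If they do not, I will recheck the convention for $I_\mathcal{B}$: the pairing is $\mathcal{B}(a, b)$ with $I^{-1}_\mathcal{B}(a)$ in the first slot. The sign of the $\ad^*$ term in the bimodule $(A^*, L^*, L^* - R^*)$ is also $-\ad^*$-related, and this likely supplies the compensating sign.

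Once the signs are settled, the two operator identities correspond bijectively via the bijection $I_\mathcal{B}$, which yields both directions of the equivalence at once.
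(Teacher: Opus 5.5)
Your plan is exactly the paper's proof. The paper also establishes $I_\mathcal{B}(L_\cdot^*(a)x^*)=a\cdot I_\mathcal{B}(x^*)$ and $I_\mathcal{B}(\ad_\cdot^*(a)x^*)=I_\mathcal{B}(x^*)\cdot a$ by pairing with $b\in A$, then substitutes $a=I_\mathcal{B}(x^*)$, $b=I_\mathcal{B}(y^*)$ and uses bijectivity of $I_\mathcal{B}$.

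You deferred the sign check, but it is not hard: $\sigma=+1$ in both cases, and no compensating sign is needed (note $L_\cdot^*-R_\cdot^*=\ad_\cdot^*$ exactly). The first identity follows from $\mathcal{B}(I_\mathcal{B}(x^*),a\cdot b)=\mathcal{B}(a\cdot I_\mathcal{B}(x^*),b)$, which is the consequence $\mathcal{B}(a\cdot b,c)=\mathcal{B}(b,a\cdot c)$. The second follows from $\mathcal{B}(I_\mathcal{B}(x^*),a\cdot b-b\cdot a)=\mathcal{B}(I_\mathcal{B}(x^*)\cdot a,b)$, which is just the defining identity read right to left, so your skew-symmetry detour is unnecessary.
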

\begin{proof}
	For all $a, b \in A$ and $x^* \in A^*$, we have
	\begin{align*}
		&\mathcal{B}(I_\mathcal{B}(L_{\cdot}^*(a)x^*), b) = \langle L_{\cdot}^*(a)x^*, b\rangle = \langle x^*, a \cdot b\rangle = \mathcal{B}(I_\mathcal{B}(x^*), a \cdot b) = \mathcal{B}(a \cdot I_\mathcal{B}(x^*), b), \\
		&\mathcal{B}(I_\mathcal{B}(\ad_{\cdot}^*(a)x^*), b) = \langle \ad_{\cdot}^*(a)x^*, b\rangle = \langle x^*, a \cdot b - b \cdot a\rangle = \mathcal{B}(I_\mathcal{B}(x^*), a \cdot b - b \cdot a ) = \mathcal{B}(I_\mathcal{B}(x^*) \cdot a, b).
	\end{align*}
	That is, 
	\begin{equation*}
		I_\mathcal{B}(L_{\cdot}^*(a)x^*) = a \cdot I_\mathcal{B}(x^*), \;\;
		I_\mathcal{B}(\ad_{\cdot}^*(a)x^*) = I_\mathcal{B}(x^*) \cdot a, \;\; \forall a \in A, x^* \in A^*.
	\end{equation*}
	Therefore,
	\begin{align*}
		 & P_T(x^*) \cdot P_T(y^*) - P_T\big(L_{\cdot}^*(P_T(x^*))y^* + \ad_{\cdot}^*(P_T(y^*))x^* \big) - \kappa P_S(x^*) \cdot P_S(y^*) \\
		 & =TI_\mathcal{B}(x^*) \cdot TI_\mathcal{B}(y^*) - TI_\mathcal{B}\big(L_{\cdot}^*(TI_\mathcal{B}(x^*))y^* + \ad_{\cdot}^*(TI_\mathcal{B}(y^*))x^* \big) - \kappa SI_\mathcal{B}(x^*) \cdot SI_\mathcal{B}(y^*)                          \\
		 & = TI_\mathcal{B}(x^*) \cdot TI_\mathcal{B}(y^*) - T\big(L_{\cdot}(TI_\mathcal{B}(x^*))I_\mathcal{B}(y^*) + R_{\cdot}(TI_\mathcal{B}(y^*))I_\mathcal{B}(x^*) \big) - \kappa SI_\mathcal{B}(x^*) \cdot SI_\mathcal{B}(y^*).
	\end{align*}
	Hence, $T$ is an extended $\mathcal{O}$-operator of weight $0$ with extension $S$ of mass $(\kappa, 0)$ on $(A, \cdot)$ associated to $(A, L_{\cdot}, R_{\cdot})$ if and only if $P_T=T I_\mathcal{B}: A^* \rightarrow A$ is an extended $\mathcal{O}$-operator of weight $0$ with extension $P_S = S I_\mathcal{B}: A^* \rightarrow A$ of mass $(\kappa, 0)$ on $(A, \cdot)$ associated to $(A^*, L_{\cdot}^*, L_{\cdot}^* - R_{\cdot}^*)$.
\end{proof}

\begin{corollary}\label{coro:exoqs}
	Let $(A, \cdot, \mathcal{B})$ be a quadratic perm algebra, $I_\mathcal{B}$ be the induced linear isomorphism by $\mathcal{B}$, and $T, S: A \rightarrow A$ be linear maps.
	Suppose that $S$ is balanced associated to $(A, L_{\cdot}, R_{\cdot})$ and self-adjoint with respect to $\mathcal{B}$, and $T$ is skew-adjoint with respect to $\mathcal{B}$. 
	Then the 2-tensor form of $T I_\mathcal{B} + S I_\mathcal{B}$ (resp $T I_\mathcal{B} - S I_\mathcal{B}$) is a solution of the extended perm-YBE of mass $\frac{\kappa+1}{4}$ in $(A, \cdot)$ if and only if $T$ is an extended $\mathcal{O}$-operator of weight $0$ with extension $S$ of mass $(\kappa, 0)$ on $(A, \cdot)$ associated to $(A, L_{\cdot}, R_{\cdot})$.
	In particular, the following statements hold.
	\begin{enumerate}
		\item
		The 2-tensor form of $T I_\mathcal{B} + S I_\mathcal{B}$ (resp. $T I_\mathcal{B} - S I_\mathcal{B}$) is a solution of the perm-YBE in $(A, \cdot)$ if and only if $T$ is an extended $\mathcal{O}$-operator of weight $0$ with extension $S$ of mass $(-1, 0)$ on $(A, \cdot)$ associated to $(A, L_{\cdot}, R_{\cdot})$.
		
		\item
		The 2-tensor form of $T I_\mathcal{B}$ is a solution of the perm-YBE in $(A, \cdot)$ if and only if $T$ is a Rota-Baxter operator of weight $0$ on $(A, \cdot)$.
	\end{enumerate}
\end{corollary}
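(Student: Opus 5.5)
Set $r$ to be the 2-tensor form of $TI_\mathcal{B} + SI_\mathcal{B}$, so $r_{+} = P_T + P_S$. The goal is to identify the symmetric and skew-symmetric parts of $r$ and then apply Theorem~\ref{thm:extended-perm-YBE-o} together with Proposition~\ref{prop:ajo2s}.

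First, $T$ is skew-adjoint, so Lemma~\ref{lem:sa2s} shows that the 2-tensor form of $P_T = TI_\mathcal{B}$ is symmetric. Likewise $S$ is self-adjoint, so the 2-tensor form of $P_S = SI_\mathcal{B}$ is skew-symmetric. Writing $r = \Lambda + \Theta$ as in Section~\ref{sec:generalized}, uniqueness of this decomposition gives $\Lambda_{+} = P_T$ and $\Theta_{+} = P_S$.

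Second, we need $\Theta$ to be $(R, \ad)$-invariant. By hypothesis $S$ is balanced associated to $(A, L_\cdot, R_\cdot)$ and self-adjoint. Lemma~\ref{lem:eq4q} (\ref{it:eq41})$\Rightarrow$(\ref{it:eq43}) then shows that $P_S = \Theta_{+}$ is balanced associated to $(A^*, L_\cdot^*, L_\cdot^* - R_\cdot^*)$. Since $\Theta$ is skew-symmetric, Lemma~\ref{lem:sinvb} gives that $\Theta$ is $(R, \ad)$-invariant. This step is where the hypotheses on $S$ get converted into the form Theorem~\ref{thm:extended-perm-YBE-o} requires, and it is the one point needing care: the correct lemmas must be matched and the symmetry conventions checked.

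Third, Theorem~\ref{thm:extended-perm-YBE-o} says that $r$ solves the extended perm-YBE of mass $\frac{\kappa+1}{4}$ if and only if $P_T$ is an extended $\mathcal{O}$-operator of weight $0$ with extension $P_S$ of mass $(\kappa,0)$ associated to $(A^*, L_\cdot^*, L_\cdot^*-R_\cdot^*)$. By Proposition~\ref{prop:ajo2s}, this holds if and only if $T$ is an extended $\mathcal{O}$-operator of weight $0$ with extension $S$ of mass $(\kappa, 0)$ associated to $(A, L_\cdot, R_\cdot)$.

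For $TI_\mathcal{B} - SI_\mathcal{B}$, the same argument applies with $-S$ in place of $S$. Indeed, $-S$ is still balanced and self-adjoint, and the defining identity \eqref{eq:exto} with $\gamma=0$ involves $S$ only through $S(u)\cdot S(v)$, so it is unchanged under $S \mapsto -S$. Alternatively, one can invoke Corollary~\ref{coro:trr}, since the 2-tensor form of $P_T - P_S$ is $\tau(r)$.

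For the special cases: (1) is $\kappa = -1$, where the mass is $0$ and the extended perm-YBE reduces to the perm-YBE. (2) follows by taking $S=0$, which is trivially balanced and self-adjoint. Then $\kappa$ is irrelevant, and an extended $\mathcal{O}$-operator with $S=0$ is exactly a Rota-Baxter operator of weight $0$.
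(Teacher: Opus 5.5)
Your proposal is correct and follows the paper's proof essentially step by step. You use Lemma~\ref{lem:sa2s} to identify $\Lambda_+ = TI_\mathcal{B}$ and $\Theta_+ = SI_\mathcal{B}$, Lemmas~\ref{lem:eq4q} and \ref{lem:sinvb} for the $(R,\ad)$-invariance of $\Theta$, then Theorem~\ref{thm:extended-perm-YBE-o} with Proposition~\ref{prop:ajo2s}, and finally the specializations $\kappa=-1$ and $S=0$. Your treatment of the ``resp.'' case via $S\mapsto -S$ is an equally valid alternative to the paper's appeal to Corollary~\ref{coro:trr}, which you also mention.
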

\begin{proof}
	By Lemma~\ref{lem:sa2s}, the 2-tensor form of $T I_\mathcal{B}$ is symmetric, while that of $S I_\mathcal{B}$ is skew-symmetric.
	Since $S$ is balanced associated to $(A, L_{\cdot}, R_{\cdot})$, it follows from Lemma~\ref{lem:eq4q} and \ref{lem:sinvb} that the 2-tensor form of $S I_\mathcal{B}$ is $(R, \ad)$-invariant.
	Then, by Theorem~\ref{thm:extended-perm-YBE-o} and Corollary~\ref{coro:trr}, the 2-tensor form of $T I_\mathcal{B} + S I_\mathcal{B}$ (resp. $T I_\mathcal{B} - S I_\mathcal{B}$) is a solution of the extended perm-YBE of mass $\frac{\kappa+1}{4}$ in $(A, \cdot)$ if and only if $T I_\mathcal{B}: A^* \rightarrow A$ is an extended $\mathcal{O}$-operator of weight $0$ with extension $S I_\mathcal{B}$ of mass $(\kappa, 0)$ on $(A, \cdot)$ associated to $(A^*, L_{\cdot}^*, L_{\cdot}^* - R_{\cdot}^*)$.
	Therefore, Proposition~\ref{prop:ajo2s} implies that the 2-tensor form of $T I_\mathcal{B} + S I_\mathcal{B}$ (resp. $T I_\mathcal{B} - S I_\mathcal{B}$) is a solution of the extended perm-YBE of mass $\frac{\kappa+1}{4}$ in $(A, \cdot)$ if and only if $T$ is an extended $\mathcal{O}$-operator of weight $0$ with extension $S$ of mass $(\kappa, 0)$ on $(A, \cdot)$ associated to $(A, L_{\cdot}, R_{\cdot})$.
	Finally, the particular cases are obtained by taking $\kappa = -1$ or $S = 0$ respectively.
\end{proof}

\begin{corollary}
	Let $(A, \cdot, \mathcal{B})$ be a quadratic perm algebra, $I_\mathcal{B}: A^* \rightarrow A$ be the induced linear isomorphism by $\mathcal{B}$ and $r \in A \otimes A$.
	Writing $r$ as $r = \Lambda + \Theta$ with $\Lambda \in \mathrm{Sym}^2(A)$ and $\Theta \in \mathrm{Alt}^2(A)$.
	Suppose that $\Theta$ is $(R, \ad)$-invariant.
	Then $r$ is a solution of the extended perm-YBE of mass $\frac{\kappa+1}{4}$ if and only if $\Lambda_{+}I_\mathcal{B}^{-1}: A \rightarrow A$ is an extended $\mathcal{O}$-operator of weight $0$ with extension $\Theta_{+}I_\mathcal{B}^{-1}: A \rightarrow A$ of mass $(\kappa, 0)$ on $(A, \cdot)$ associated to $(A, L_{\cdot}, R_{\cdot})$.
	In particular, the following statements hold.
	\begin{enumerate}
		\item
		      $r$ is a solution of the perm-YBE in $(A, \cdot)$ if and only if $\Lambda_{+}I_\mathcal{B}^{-1}: A \rightarrow A$ is an extended $\mathcal{O}$-operator of weight $0$ with extension $\Theta_{+}I_\mathcal{B}^{-1}: A \rightarrow A$ of mass $(-1, 0)$ on $(A, \cdot)$ associated to $(A, L_{\cdot}, R_{\cdot})$.

		\item
		      $\Lambda$ is a solution of the perm-YBE in $(A, \cdot)$ if and only if $\Lambda_{+}I_\mathcal{B}^{-1}: A \rightarrow A$ is a Rota-Baxter operator of weight 0 on $(A, \cdot)$.
	\end{enumerate}
\end{corollary}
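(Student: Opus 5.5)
The plan is to reduce the statement to Theorem~\ref{thm:extended-perm-YBE-o} composed with Proposition~\ref{prop:ajo2s}, read in reverse. Set $T := \Lambda_{+} I_\mathcal{B}^{-1}$ and $S := \Theta_{+} I_\mathcal{B}^{-1}$. Then $P_T = T I_\mathcal{B} = \Lambda_{+}$ and $P_S = S I_\mathcal{B} = \Theta_{+}$. Since $\Theta$ is $(R, \ad)$-invariant, Theorem~\ref{thm:extended-perm-YBE-o} gives the first equivalence: $r$ is a solution of the extended perm-YBE of mass $\frac{\kappa+1}{4}$ if and only if $\Lambda_{+}$ is an extended $\mathcal{O}$-operator of weight $0$ with extension $\Theta_{+}$ of mass $(\kappa, 0)$ associated to $(A^*, L_{\cdot}^*, L_{\cdot}^* - R_{\cdot}^*)$.

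Proposition~\ref{prop:ajo2s} then converts this condition on $P_T$ and $P_S$ into the condition that $T$ is an extended $\mathcal{O}$-operator of weight $0$ with extension $S$ of mass $(\kappa, 0)$ associated to $(A, L_{\cdot}, R_{\cdot})$. That proposition needs no hypotheses on $T$ or $S$, since its proof only uses the intertwining identities $I_\mathcal{B}(L_{\cdot}^*(a)x^*) = a \cdot I_\mathcal{B}(x^*)$ and $I_\mathcal{B}(\ad_{\cdot}^*(a)x^*) = I_\mathcal{B}(x^*) \cdot a$. So no self-adjointness or balancedness of $S$ has to be verified, and chaining the two equivalences proves the main claim.

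The special cases follow by specialising. For (1), take $\kappa = -1$; the mass $\frac{\kappa+1}{4}$ becomes $0$, and the extended perm-YBE of mass $0$ is exactly the perm-YBE. For (2), apply the main claim to $\Lambda$ itself, whose skew-symmetric part is $0$ and hence trivially $(R, \ad)$-invariant. Its extension is $0$, so the extended-operator condition reduces to $T$ being an $\mathcal{O}$-operator of weight $0$ associated to $(A, L_{\cdot}, R_{\cdot})$, that is, a Rota-Baxter operator of weight $0$.

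The only real obstacle is bookkeeping: checking that $(\Lambda_{+}I_\mathcal{B}^{-1})I_\mathcal{B}$ is literally the operator $\Lambda_{+}$ appearing in Theorem~\ref{thm:extended-perm-YBE-o}, and likewise for $\Theta_{+}$. This holds by construction.
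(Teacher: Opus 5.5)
Your proof is correct, and it is more direct than the paper's.

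The paper does not chain Theorem~\ref{thm:extended-perm-YBE-o} and Proposition~\ref{prop:ajo2s} itself. It invokes Corollary~\ref{coro:exoqs}, so it must first check that corollary's hypotheses. Lemma~\ref{lem:sa2s} makes $\Lambda_{+}I_\mathcal{B}^{-1}$ skew-adjoint and $\Theta_{+}I_\mathcal{B}^{-1}$ self-adjoint. Lemmas~\ref{lem:sinvb} and~\ref{lem:eq4q} turn the $(R,\ad)$-invariance of $\Theta$ into balancedness of $\Theta_{+}I_\mathcal{B}^{-1}$. Corollary~\ref{coro:exoqs} is in turn proved by pushing these properties back through the same lemmas, recovering symmetry of $\Lambda$ and skew-symmetry and $(R,\ad)$-invariance of $\Theta$. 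It then applies Theorem~\ref{thm:extended-perm-YBE-o} and Proposition~\ref{prop:ajo2s}.

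So the paper's argument is a round trip, and yours skips it. Your remark that Proposition~\ref{prop:ajo2s} needs no hypotheses on $T,S$ is what justifies the shortcut: it only uses that $I_\mathcal{B}$ intertwines $(A^*, L_\cdot^*, L_\cdot^*-R_\cdot^*)$ with $(A, L_\cdot, R_\cdot)$. What the paper's packaging buys is uniformity with Corollary~\ref{coro:exoqs}. That corollary starts from operators on $A$ rather than from a tensor, so it genuinely needs the adjointness and balancedness conditions.

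One small point for item (2): state explicitly that you take $\kappa=-1$. Equivalently, note that the mass term $(\Lambda_{13}-\Lambda_{31})\cdot(\Lambda_{23}-\Lambda_{32})$ vanishes for symmetric $\Lambda$. Either way, the extended perm-YBE for $\Lambda$ is then exactly the perm-YBE.
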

\begin{proof}
	By Lemma~\ref{lem:sa2s}, $\Lambda_{+}I_\mathcal{B}^{-1}$ is skew-adjoint with respect to $\mathcal{B}$, and $\Theta_{+}I_\mathcal{B}^{-1}$ is self-adjoint with respect to $\mathcal{B}$.
	Moreover, since $\Theta$ is $(R, \ad)$-invariant, it follows from Lemma~\ref{lem:sinvb} and \ref{lem:eq4q} that $\Theta_{+}I_\mathcal{B}^{-1}:A \rightarrow A$ is balanced associated to $(A, L_{\cdot}, R_{\cdot})$.
	The desired conclusions now follow from Corollary~\ref{coro:exoqs}.
\end{proof}

\begin{example}
	Let $(A, \cdot)$ be the 2-dimensional perm algebra with basis $\{e_1, e_2\}$ whose non-zero multiplication is defined by
	\begin{align*}
		e _1\cdot e_1 & = e_1,\quad e_1\cdot e_2 = e_2.
	\end{align*}
	Define a linear map $T: A \to A$ by
	\begin{equation*}
		T(e_1) = \gamma e_1, \;\; T(e_2) = - \gamma e_2.
	\end{equation*}
	Then $T$ is an extended $\mathcal{O}$-operator of weight $0$ with extension $S=\id$ of mass $(-\gamma^2, 0)$ on $(A, \cdot)$ associated to $(A, L_{\cdot}, R_{\cdot})$.
	Let $\mathcal{B}$ be the bilinear form defined by
	\begin{equation*}
		\mathcal{B}(e_1, e_1) = \mathcal{B}(e_2, e_2) = 0, \;\;
		\mathcal{B}(e_1, e_2) = -\mathcal{B}(e_2, e_1) = 1.
	\end{equation*}
	Then $(A, \cdot, \mathcal{B})$ is a quadratic perm algebra.
	Moreover $T$ is skew-adjoint with respect to $\mathcal{B}$.
	Therefore, the 2-tensor form of $T I_\mathcal{B} \pm S I_\mathcal{B}$
	\begin{equation*}
		(\gamma - 1) e_1 \otimes e_2 + (\gamma  + 1)e_2 \otimes e_1,  (\gamma + 1) e_1 \otimes e_2 + (\gamma  - 1)e_2 \otimes e_1
	\end{equation*}
	are solutions of the extended perm-YBE of mass $\frac{1-\gamma^2}{4}$ in $(A, \cdot)$.
	In particular, 
	\begin{equation*}
		\pm 2 e_1 \otimes e_2, \pm 2 e_2 \otimes e_1
	\end{equation*}
	are solutions of the perm-YBE in $(A, \cdot)$.
\end{example}

\subsection{Extended \texorpdfstring{$\mathcal{O}$}{O}-operators and extended perm-YBE on semi-direct product}
\label{sec:extension}

\begin{lemma}\label{lem:b2bsm}
	Let $(A, \cdot)$ be a perm algebra, $(V, \frakl, \frakr)$ a bimodule of $(A, \cdot)$, and $(\mathfrak{A}=A \ltimes_{\frakl^*, \frakl^* - \frakr^*} V^*, \bullet)$ be the semi-direct product of $(A, \cdot)$ and $(V^*, \frakl^*, \frakl^* - \frakr^*)$.
	Let $S: V \rightarrow A$ be a linear map.
	Then $\mathcal{S} := r^{S}_{+} - r^{S}_{-}: \mathfrak{A}^* \rightarrow \mathfrak{A}$ is a balanced $\mathfrak{A}$-bimodule homomorphism from $(\mathfrak{A}^*, L_{\bullet}^*, L_{\bullet}^* - R_{\bullet}^*)$ to $(\mathfrak{A}, L_{\bullet}, R_{\bullet})$ if and only if $S$ is a balanced $A$-bimodule homomorphism from $(V, \frakl, \frakr)$ to $(A, L_{\cdot}, R_{\cdot})$.
\end{lemma}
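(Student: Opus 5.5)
By Lemma~\ref{lem:sinvb}, applied to the perm algebra $(\mathfrak{A}, \bullet)$, we can reduce everything to an $(R,\ad)$-invariance statement. The first step is to identify $\mathcal{S}$ with the $2$-tensor form of a skew-symmetric element. Write $\mathfrak{A}^* = A^* \oplus V$ and $r^S = \sum_i S(e_i) \otimes e^i \in A \otimes V^* \subset \mathfrak{A} \otimes \mathfrak{A}$. Directly from the defining pairing, for $a^* \in A^*$ and $v \in V$,
\begin{equation*}
	r^S_{+}(a^* + v) = S^*(a^*) \in V^*, \;\; r^S_{-}(a^* + v) = S(v) \in A.
\end{equation*}
Hence $\mathcal{S} = r^S_+ - r^S_- = (r^S - \tau(r^S))_{+}$ is the $2$-tensor form of the skew-symmetric tensor $\theta := r^S - \tau(r^S) = \sum_i \big(S(e_i) \otimes e^i - e^i \otimes S(e_i)\big)$. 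By Lemma~\ref{lem:sinvb}, $\mathcal{S}$ is a balanced $\mathfrak{A}$-bimodule homomorphism if and only if $\theta$ is $(R,\ad)$-invariant in $(\mathfrak{A}, \bullet)$, i.e.
\begin{equation*}
	(\id \otimes R_\bullet(a + \xi) - \ad_\bullet(a+\xi) \otimes \id)(\theta) = 0, \;\; \forall a \in A, \; \xi \in V^*.
\end{equation*}
This is the condition I will unpack.

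Next I will expand this condition using the semi-direct product multiplication. There, $a \bullet \xi = \frakl^*(a)\xi$, $\xi \bullet a = (\frakl^* - \frakr^*)(a)\xi$, and $V^* \bullet V^* = 0$. For the $a$-part, I will separate components in $A \otimes V^*$ and $V^* \otimes A$.
\begin{enumerate}
	\item The $A \otimes V^*$ component is $\sum_i S(e_i) \otimes (\frakl^*-\frakr^*)(a)e^i - \ad_\cdot(a)S(e_i) \otimes e^i$. Pairing with $u \in V$, its vanishing is equivalent to $S((\frakl - \frakr)(a)u) = a \cdot S(u) - S(u) \cdot a$.
	\item The $V^* \otimes A$ component is $\sum_i -e^i \otimes S(e_i)\cdot a + \big(\frakl^*(a) - (\frakl^*-\frakr^*)(a)\big)e^i \otimes S(e_i) = \sum_i \big(\frakr^*(a)e^i \otimes S(e_i) - e^i \otimes S(e_i)\cdot a\big)$. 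Its vanishing is equivalent to $S(\frakr(a)u) = S(u)\cdot a$.
\end{enumerate}
Together these two conditions are exactly Eq.~\eqref{eq:ainv}.

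For the $\xi$-part, all terms land in $V^* \otimes V^*$. They come from $S(e_i) \otimes e^i \bullet \xi$ (which is zero), $-\ad_\bullet(\xi)S(e_i) \otimes e^i$, $-e^i \otimes S(e_i) \bullet \xi$ and $\ad_\bullet(\xi)e^i \otimes S(e_i)$ (which is zero). Pairing with $u \otimes v$ should produce an identity of the form $\langle \xi, \frakr(S(v))u - \frakl(S(u))v\rangle = 0$, possibly up to terms that vanish once Eq.~\eqref{eq:ainv} is used. This gives exactly the balanced condition \eqref{eq:bal}. Both directions of the lemma then follow at once, since each step above is an equivalence.

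The main obstacle is the bookkeeping in the $\xi$-part. One must carefully compute $\xi \bullet S(e_i) = (\frakl^* - \frakr^*)(S(e_i))\xi$ and $S(e_i) \bullet \xi = \frakl^*(S(e_i))\xi$, and then dualize correctly so that the resulting identity is precisely $\frakl(S(u))v = \frakr(S(v))u$, rather than a variant with $\frakl - \frakr$. If a stray $\frakl$-term appears, I will use the bimodule identities \eqref{eq:repp} and the already-obtained Eq.~\eqref{eq:ainv} to cancel it.
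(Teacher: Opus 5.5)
Your proposal is correct and follows essentially the paper's route: both use $\mathcal{S} = (r^S - \tau(r^S))_{+}$ and Lemma~\ref{lem:sinvb}, then split by the components of $\mathfrak{A} = A \oplus V^*$. The only difference is that you unpack the $(R,\ad)$-invariance of the tensor, whereas the paper unpacks the balancedness of $\mathcal{S}$; both yield the same three identities $S(\frakr(a)u) = S(u)\cdot a$, $S((\frakl-\frakr)(a)u) = \ad_\cdot(a)S(u)$ and $\frakl(S(u))v = \frakr(S(v))u$, and your hedge about stray terms in the $\xi$-part is unnecessary, since $\ad_\bullet(\xi)S(e_i) = -\frakr^*(S(e_i))\xi$ makes the $V^* \otimes V^*$ component pair with $u \otimes v$ to exactly $\langle \xi, \frakr(S(v))u - \frakl(S(u))v \rangle$.
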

\begin{proof}
	Note that $r^S - \tau(r^S)$ is skew-symmetric, by Lemma~\ref{lem:sinvb}, $r^{S}_{+} - r^{S}_{-}: \mathfrak{A}^* \rightarrow \mathfrak{A}$ is a balanced $\mathfrak{A}$-bimodule homomorphism from $(\mathfrak{A}^*, L_{\bullet}^*, L_{\bullet}^* - R_{\bullet}^*)$ to $(\mathfrak{A}, L_{\bullet}, R_{\bullet})$ if and only if it is balanced associated to $(\mathfrak{A}^*, L_{\bullet}^*, L_{\bullet}^* - R_{\bullet}^*)$.
	It then suffices to prove that $\mathcal{S}$ is balanced associated to $(\mathfrak{A}^*, L_{\bullet}^*, L_{\bullet}^* - R_{\bullet}^*)$ if and only if $S$ is a balanced $A$-bimodule homomorphism from $(V, \frakl, \frakr)$ to $(A, L_{\cdot}, R_{\cdot})$.
	For all $x^* \in A^*$ and $u \in V$, we have
	\begin{equation*}
		(r^{S}_{+} - r^{S}_{-})(x^* + u) = -S(u) + S^*(x^*).
	\end{equation*}
	Let $x^*, y^* \in A^*$, $a \in A$, $\xi^* \in V^*$ and $u, v \in V$, we have
	\begin{eqnarray*}
		 && \langle -L_{\bullet}^*((r^{S}_{+} - r^{S}_{-})(x^* + u)) (y^* + v) + \ad_{\bullet}^*((r^{S}_{+} - r^{S}_{-})(y^* + v)) (x^* + u), a + \xi^*\rangle \\
		 && = \langle -L_{\bullet}^*(-S(u) + S^*(x^*)) (y^* + v) + \ad_{\bullet}^*(-S(v) + S^*(y^*)) (x^* + u), a + \xi^* \rangle                                \\
		 && = \langle y^* + v, (S(u) - S^*(x^*)) \bullet (a + \xi^*)\rangle \\
		 &&\quad + \langle x^* + u, (-S(v) + S^*(y^*)) \bullet (a + \xi^*) - (a + \xi^*) \bullet (-S(v) + S^*(y^*)) \rangle \\
		 &&= \langle y^*, S(u) \cdot a \rangle + \langle \frakl^*(S(u))\xi^* - (\frakl^* - \frakr^*)(a)S^*(x^*), v\rangle \\
		 &&\quad + \langle x^* + u, - S(v) \cdot a + a \cdot S(v) - \frakr^*(a)S^*(y^*) - \frakr^*(S(v))\xi^* \rangle \\
		 && = \langle y^*, S(u) \cdot a - S(
		\frakr(a)u) \rangle  + \langle \xi^*, \frakl(S(u))v - \frakr(S(v))u \rangle \\
		&& + \langle x^*, - S(v) \cdot a + a \cdot S(v) - S((\frakl - \frakr)(a)v)\rangle.
	\end{eqnarray*}
	Therefore, $r^{S}_{+} - r^{S}_{-}: \mathfrak{A}^* \rightarrow \mathfrak{A}$ is balanced associated to $(\mathfrak{A}^*, L_{\bullet}^*, L_{\bullet}^* - R_{\bullet}^*)$ if and only if $S$ is a balanced $A$-bimodule homomorphism from $(V, \frakl, \frakr)$ to $(A, L_{\cdot}, R_{\cdot})$.
	The proof is complete.
\end{proof}

\begin{theorem}\label{thm:eo2smp}
	Let $(A, \cdot)$ be a perm algebra, $(V, \frakl, \frakr)$ be a bimodule of $(A, \cdot)$, and $(\mathfrak{A}=A \ltimes_{\frakl^*, \frakl^* - \frakr^*} V^*, \bullet)$ be the semi-direct product of $(A, \cdot)$ and $(V^*, \frakl^*, \frakl^* - \frakr^*)$.
	Let $T, S: V \rightarrow A$ be linear maps.
	Suppose that $S$ is a balanced $A$-bimodule homomorphism from $(V, \frakl, \frakr)$ to $(A, L_{\cdot}, R_{\cdot})$.
	Then $\mathcal{T} := r_{+}^{T} + r_{-}^{T}: \mathfrak{A}^* \rightarrow \mathfrak{A}$ is an extended $\mathcal{O}$-operator of weight 0 with extension $\mathcal{S} := r_{+}^{S} - r_{-}^{S}$ of mass $(\kappa, 0)$ on $(\mathfrak{A}, \bullet)$ associated $(\mathfrak{A}^*, L_{\bullet}^*, L_{\bullet}^* - R_{\bullet}^*)$ if and only if $T: V \rightarrow A$ is an extended $\mathcal{O}$-operator of weight 0 with extension $S$ of mass $(\kappa, 0)$ on $(A, \cdot)$ associated to $(V, \frakl, \frakr)$.
\end{theorem}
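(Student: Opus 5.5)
We compute everything explicitly on the decomposition $\mathfrak{A}^* = A^* \oplus V$ and compare components, in the same spirit as the proof of Lemma~\ref{lem:b2bsm}. Identify $\mathfrak{A}^* = A^* \oplus V$. The formula
\begin{equation*}
	(r^{S}_{+} - r^{S}_{-})(x^* + u) = -S(u) + S^*(x^*)
\end{equation*}
appeared in the proof of Lemma~\ref{lem:b2bsm}. The same reasoning gives
\begin{equation*}
	\mathcal{T}(x^* + u) = T(u) + T^*(x^*), \;\; \forall x^* \in A^*, \; u \in V.
\end{equation*}
The first step is to compute the new multiplication $(x^*+u)\circ (y^*+v) := L_\bullet^*(\mathcal{T}(x^*+u))(y^*+v) + \ad_\bullet^*(\mathcal{T}(y^*+v))(x^*+u)$ on $\mathfrak{A}^*$. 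We pair it with $a+\xi^*$ and use the semi-direct product multiplication. This yields an $A^*$-component built from $L_\cdot^*(T(u))y^*$, $\ad_\cdot^*(T(v))x^*$ and terms involving $\frakl$, $\frakr$ and $T^*$, and a $V$-component equal to $\frakl(T(u))v + \frakr(T(v))u$. For the latter we use that the dual of $(V^*,\frakl^*,\frakl^*-\frakr^*)$ recovers $(V,\frakl,\frakr)$ in the pairing.

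Next we expand the defect
\begin{equation*}
	\mathcal{E}(x^*+u, y^*+v) := \mathcal{T}(x^*+u)\bullet\mathcal{T}(y^*+v) - \mathcal{T}\big((x^*+u)\circ(y^*+v)\big) - \kappa\, \mathcal{S}(x^*+u)\bullet\mathcal{S}(y^*+v),
\end{equation*}
which is bilinear. We split it into the four cases $(u,v)$, $(x^*,v)$, $(u,y^*)$ and $(x^*,y^*)$, and pair each against $a+\xi^*$.

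In the case $(u,v)$, the $A$-component is exactly $\mathcal{A}_T(u,v) - \kappa S(u)\cdot S(v)$, which is the defining expression~\eqref{eq:exto} for $T$ with $\lambda=\gamma=0$. The $V^*$-component vanishes, because $A\cdot A\subset A$ and $\mathcal{T}$ applied to the $V$-part of the product lands in $A$. So the case $(u,v)$ gives necessity. The case $(x^*,y^*)$ vanishes for degree reasons, since $V^*\bullet V^* = 0$ and $\mathcal{T}(A^*)\subset V^*$.

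For the mixed cases $(u,y^*)$ and $(x^*,v)$, the result is an element of $V^*$. We pair it with $w\in V$ and move every operator across the pairing, so that the pairing becomes $\langle y^*, \Phi(u,w)\rangle$ or $\langle x^*, \Psi(v,w)\rangle$ for explicit $\Phi,\Psi\colon V\otimes V\to A$. The claim to verify is that, for instance, $\Phi(u,w) = \mathcal{A}_T(w,u) - \kappa S(w)\cdot S(u)$ up to a sign, possibly after a reindexing of the arguments. The $S$-term needs particular care here. The mixed product is $\mathcal{S}(u)\bullet\mathcal{S}(y^*) = -S(u)\bullet S^*(y^*) = -\frakl^*(S(u))S^*(y^*)$, and dualizing it produces $S(\frakl(S(u))w)$. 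This is turned into $S(u)\cdot S(w)$ using the hypothesis that $S$ is an $A$-bimodule homomorphism~\eqref{eq:ainv}. The analogous $\ad$-type term is handled with $\frakr$ and balancedness~\eqref{eq:bal}, which also absorbs the asymmetry between $\frakr(S(v))u$ and $\frakl(S(u))v$. Once this is done, every component of $\mathcal{E}$ is a pairing of some linear form with the defect of~\eqref{eq:exto} for $T$, and both implications follow.

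I expect the main obstacle to be the bookkeeping in the mixed cases. The signs coming from $\mathcal{S}$, which has $-S$ on $V$ but $+S^*$ on $A^*$, must combine so that the $\kappa$-terms give exactly $\kappa S(\cdot)\cdot S(\cdot)$. The $T$-terms arising from $\ad_\bullet^*$ must reorganize into $\mathcal{A}_T$ with the arguments in the correct order. I would first do the case $(u,y^*)$ completely and then obtain $(x^*,v)$ by the analogous computation. If the terms do not match directly, I would fall back on showing that each mixed component is the dual of the $(u,v)$ component, via Lemma~\ref{lem:b2bsm} and the identities already used in the proof of Theorem~\ref{thm:gl2gp}.
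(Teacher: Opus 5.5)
Your proposal is correct and is essentially the paper's proof. The paper also works on $\mathfrak{A}^*=A^*\oplus V$ with $\mathcal{T}(x^*+u)=T(u)+T^*(x^*)$ and $\mathcal{S}(x^*+u)=-S(u)+S^*(x^*)$, and pairs the whole defect with $z^*+w$ at once to obtain $\langle z^*,D(u,v)\rangle-\langle y^*,D(u,w)\rangle+\langle x^*,D(w,v)-D(v,w)\rangle$, where $D(u,v):=\mathcal{A}_T(u,v)-\kappa S(u)\cdot S(v)$, which is exactly your four-case split. Two of your predictions need correcting, though neither affects the conclusion: the $(x^*,v)$ component is a difference of two defects rather than a single reindexed one, and converting the $S$-terms needs only the homomorphism property \eqref{eq:ainv}, not balancedness.
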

\begin{proof}
	For all $x^* \in A^*$ and $v \in V$, we have
	\begin{equation*}
		\mathcal{T}(x^* + u) =  T(u) + T^*(x^*).
	\end{equation*}
	For all $x^*, y^*, z^* \in A^*$ and $u, v, w \in V$, we have
	\begin{eqnarray*}
		 && \langle \mathcal{T}(x^* + u) \bullet \mathcal{T}(y^* + v) - \kappa \mathcal{S}(x^*+u) \bullet \mathcal{S}(y^*+v), z^*+ w\rangle                             \\
		 && \quad - \langle \mathcal{T}\big( L_{\bullet}^*(\mathcal{T}(x^* + u))(y^*+v) + \ad_{\bullet}^*(\mathcal{T}(y^* + v))(x^*+u)\big), z^*+ w\rangle \\
		 && =\langle (T(u) + T^*(x^*)) \bullet ( T(v) + T^*(y^*)) - \kappa (-S(u) + S^*(x^*)) \bullet (-S(v) + S^*(y^*)), z^*+ w\rangle \\
		 && \quad - \langle \mathcal{T}\big( L_{\bullet}^*(T(u) + T^*(x^*))(y^*+v) + \ad_{\bullet}^*(T(v) + T^*(y^*))(x^*+u)\big), z^*+ w \rangle \\
		 && =\langle z^*, T(u) \cdot T(v) - \kappa S(u) \cdot S(v) \rangle \\
		 && \quad  + \langle \frakl^*(T(u))T^*(y^*) + (\frakl^* - \frakr^*)(T(v))T^*(x^*) + \kappa \frakl^*(S(u))S^*(y^*) + \kappa (\frakl^* - \frakr^*)(S(v))S^*(x^*), w\rangle                                 \\
		 && \quad - \langle L_{\bullet}^*(T(u) + T^*(x^*) )(y^*+v) + \ad_{\bullet}^*(T(v) + T^*(y^*) )(x^*+u), \mathcal{T}(z^* + w)\rangle                    \\
		 && =\langle z^*, T(u) \cdot T(v) - \kappa S(u) \cdot S(v) \rangle \\
		 && \quad + \langle y^*, T(\frakl(T(u))w) + \kappa S(\frakl(S(u))w)\rangle + \langle x^*, T((\frakl - \frakr)(T(v))w) + \kappa S((\frakl - \frakr)(S(v))w) \rangle                                  \\
		 && \quad - \langle y^* + v, (T(u) + T^*(x^*)) \bullet (T(w) + T^*(z^*)) \rangle \\
		 && \quad - \langle x^* + u, (T(v) + T^*(y^*)) \bullet (T(w) + T^*(z^*)) - ( T(w) + T^*(z^*)) \bullet (T(v) + T^*(y^*)) \rangle \\
		 && =\langle z^*, T(u) \cdot T(v) - \kappa S(u) \cdot S(v) \rangle \\
		 && \quad + \langle y^*, T(\frakl(T(u))w) + \kappa S(\frakl(S(u))w)\rangle + \langle x^*, T((\frakl - \frakr)(T(v))w) + \kappa S((\frakl - \frakr)(S(v))w) \rangle \\
		 && \quad - \langle y^* + v, T(u) \cdot T(w) + \frakl^*(T(u))(T^*(z^*)) + (\frakl^* - \frakr^*)(T(w))(T^*(x^*)) \rangle                                                                   \\
		 && \quad - \langle x^* + u, T(v) \cdot T(w)  - \frakr^*(T(w))(T^*(y^*)) - T(w) \cdot T(v)  + \frakr^*(T(v))(T^*(z^*)) \rangle                                                                   \\
		 && =\langle z^*, T(u) \cdot T(v) - T(\frakl(T(u))v) - T(\frakr(T(v))u) - \kappa S(u) \cdot S(v) \rangle                                                                         \\
		 && \quad + \langle y^*, T(\frakl(T(u))w) + T(\frakr(T(w))u) - T(u) \cdot T(w) + \kappa S(\frakl(S(u))w) \rangle                                                               \\
		 && \quad + \langle x^*, - T((\frakl - \frakr)(T(w))v) + T(w) \cdot T(v) - \kappa S(\frakr(S(v))w) \rangle                                                                \\
		 && \quad + \langle x^*, T((\frakl - \frakr)(T(v))w) -T(v) \cdot T(w) + \kappa S(\frakl(S(v))w) \rangle                                                                \\
		 && =\langle z^*, T(u) \cdot T(v) - T(\frakl(T(u))v) - T(\frakr(T(v))u) - \kappa S(u) \cdot S(v) \rangle                                                                         \\
		 && \quad - \langle y^*, - T(\frakl(T(u))w) - T(\frakr(T(w))u) + T(u) \cdot T(w) - \kappa S(u) \cdot S(w) \rangle                                                               \\
		 && \quad + \langle x^*,  - T((\frakl - \frakr)(T(w))v) + T(w) \cdot T(v)  - \kappa S(w) \cdot S(v) \rangle \\
		 && \quad + \langle x^*, T((\frakl - \frakr)(T(v))w)  -T(v) \cdot T(w)  + \kappa S(v) \cdot S(w)  \rangle,
	\end{eqnarray*}
	Note that
	\begin{align*}
		&T((\frakl - \frakr)(T(v))w)  -T(v) \cdot T(w)  + \kappa S(v) \cdot S(w) - T((\frakl - \frakr)(T(w))v) + T(w) \cdot T(v)  - \kappa S(w) \cdot S(v) \\
		&=\big( T(w) \cdot T(v) - T(\frakl(T(w))v) - T(\frakr(T(v))w) - \kappa S(w) \cdot S(v) \big) \\
		&\quad - \big( T(v) \cdot T(w) - T(\frakl(T(v))w) - T(\frakr(T(w))v) - \kappa S(v) \cdot S(w) \big).
	\end{align*}
	Therefore, $\mathcal{T}$ is an extended $\mathcal{O}$-operator of weight 0 with extension $\mathcal{S}$ of mass $(\kappa, 0)$ on $(\mathfrak{A}, \bullet)$ associated $(\mathfrak{A}^*, L_{\bullet}^*, L_{\bullet}^* - R_{\bullet}^*)$ if and only if $T: V \rightarrow A$ is an extended $\mathcal{O}$-operator of weight 0 with extension $S$ of mass $(\kappa, 0)$ on $(A, \cdot)$ associated to $(V, \frakl, \frakr)$.
\end{proof}

\begin{corollary}\label{cor:tsc}
	Let $(A, \cdot)$ be a perm algebra, $(V, \frakl, \frakr)$ be a bimodule of $(A, \cdot)$, and $(\mathfrak{A} = A \ltimes_{\frakl^*, \frakl^* - \frakr^*} V^*, \bullet)$ be the semi-direct product of $(A, \cdot)$ and $(V^*, \frakl^*, \frakl^* - \frakr^*)$.
	Suppose that $S: V \rightarrow A$ is a balanced $A$-bimodule homomorphism from $(V, \frakl, \frakr)$ to $(A, L_{\cdot}, R_{\cdot})$.
	Then $T: V \rightarrow A$ is an extended $\mathcal{O}$-operator of weight 0 with extension $S$ of mass $(\kappa, 0)$ on $(A, \cdot)$ associated to $(V, \frakl, \frakr)$ if and only if $(r^T + \tau(r^T)) + (r^S - \tau(r^S))$ (resp. $(r^T + \tau(r^T)) - (r^S - \tau(r^S))$) is a solution of the extended perm-YBE of mass $\frac{\kappa+1}{4}$ in $(A \ltimes_{\frakl^*, \frakl^* - \frakr^*} V^*, \bullet)$.
	In particular, the following statements hold.
	\begin{enumerate}
		\item
		      $T$ is an extended $\mathcal{O}$-operator of weight 0 with extension $S$ of mass $(-1, 0)$ on $(A, \cdot)$ associated to $(V, \frakl, \frakr)$ if and only if $(r^T + \tau(r^T)) + (r^S - \tau(r^S))$ (resp. $(r^T + \tau(r^T)) - (r^S - \tau(r^S))$) is a solution of the perm-YBE in $(A \ltimes_{\frakl^*, \frakl^* - \frakr^*} V^*, \bullet)$.

		\item
		      $T$ is an $\mathcal{O}$-operator of weight 0 associated to $(V, \frakl, \frakr)$ if and only if $r^T + \tau(r^T)$ is a symmetric solution of the perm-YBE in $(A \ltimes_{\frakl^*, \frakl^* - \frakr^*} V^*, \bullet)$.
	\end{enumerate}
\end{corollary}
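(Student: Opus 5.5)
The plan is to reduce the statement to Theorem~\ref{thm:extended-perm-YBE-o} applied in the semi-direct product $(\mathfrak{A}, \bullet)$, and then transport the resulting condition back to $(A,\cdot)$ via Theorem~\ref{thm:eo2smp}. Set
\begin{equation*}
	r := (r^T + \tau(r^T)) + (r^S - \tau(r^S)) \in \mathfrak{A} \otimes \mathfrak{A}.
\end{equation*}
Its symmetric part is $\Lambda = r^T + \tau(r^T)$ and its skew-symmetric part is $\Theta = r^S - \tau(r^S)$.

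The first step is to identify the associated maps. Since $(\tau(r))_{+} = r_{-}$, we get
\begin{equation*}
	\Lambda_{+} = r^T_{+} + r^T_{-} = \mathcal{T}, \qquad \Theta_{+} = r^S_{+} - r^S_{-} = \mathcal{S},
\end{equation*}
which are exactly the operators appearing in Theorem~\ref{thm:eo2smp}.

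The second step is to verify the $(R,\ad)$-invariance hypothesis of Theorem~\ref{thm:extended-perm-YBE-o}. By Lemma~\ref{lem:b2bsm}, the assumption that $S$ is a balanced $A$-bimodule homomorphism means $\mathcal{S}$ is a balanced $\mathfrak{A}$-bimodule homomorphism from $(\mathfrak{A}^*, L_\bullet^*, L_\bullet^*-R_\bullet^*)$ to $(\mathfrak{A}, L_\bullet, R_\bullet)$. Since $\Theta$ is skew-symmetric, Lemma~\ref{lem:sinvb} then gives that $\Theta$ is $(R,\ad)$-invariant in $(\mathfrak{A},\bullet)$.

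Theorem~\ref{thm:extended-perm-YBE-o} then says $r$ solves the extended perm-YBE of mass $\frac{\kappa+1}{4}$ in $\mathfrak{A}$ if and only if $\mathcal{T}$ is an extended $\mathcal{O}$-operator of weight $0$ with extension $\mathcal{S}$ of mass $(\kappa,0)$ associated to $(\mathfrak{A}^*, L_\bullet^*, L_\bullet^*-R_\bullet^*)$. By Theorem~\ref{thm:eo2smp}, this holds if and only if $T$ is an extended $\mathcal{O}$-operator of weight $0$ with extension $S$ of mass $(\kappa,0)$ associated to $(V,\frakl,\frakr)$.

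For the ``resp.'' version with $-(r^S-\tau(r^S))$, I would note that the element is $\tau(r)$, so Corollary~\ref{coro:trr} applies. Alternatively, $-S$ is also a balanced bimodule homomorphism, and replacing $S$ by $-S$ leaves the defining equation unchanged, since $\kappa S(u)\cdot S(v)$ is quadratic in $S$ and $\gamma=0$.

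The particular cases follow directly:
\begin{enumerate}
	\item Taking $\kappa=-1$ gives mass $0$, i.e.\ the perm-YBE.
	\item Taking $S=0$, which is trivially a balanced bimodule homomorphism, reduces the extended $\mathcal{O}$-operator to an ordinary $\mathcal{O}$-operator of weight $0$ and makes $r = r^T+\tau(r^T)$ symmetric. In this case the extended perm-YBE coincides with the perm-YBE for any mass.
\end{enumerate}

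The only place needing care is the bookkeeping in the first step. The identification $\Lambda_+=\mathcal{T}$ and $\Theta_+=\mathcal{S}$ must hold with the exact sign conventions used in Lemma~\ref{lem:b2bsm} and Theorem~\ref{thm:eo2smp}, namely $\mathcal{T}(x^*+u)=T(u)+T^*(x^*)$ and $\mathcal{S}(x^*+u)=-S(u)+S^*(x^*)$. I would check this directly on the basis expression $r^T=\sum_i T(e_i)\otimes e^i$. Everything else is a chaining of earlier results.
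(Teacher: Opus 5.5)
Your proposal is correct and follows the paper's own proof. The paper obtains $(R,\ad)$-invariance of $r^S-\tau(r^S)$ from Lemmas~\ref{lem:b2bsm} and~\ref{lem:sinvb}. It then applies Theorem~\ref{thm:extended-perm-YBE-o}, with Corollary~\ref{coro:trr} for the ``resp.'' case, in $(\mathfrak{A},\bullet)$, and returns to $(A,\cdot)$ via Theorem~\ref{thm:eo2smp}, with the special cases coming from $\kappa=-1$ and $S=0$. Your alternative handling of the ``resp.'' case by replacing $S$ with $-S$ is also valid, and your explicit identification $\Lambda_+=\mathcal{T}$, $\Theta_+=\mathcal{S}$ supplies a bookkeeping step the paper leaves implicit.
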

\begin{proof}
	By Lemma~\ref{lem:b2bsm} and \ref{lem:sinvb}, $r^S - \tau(r^S)$ is $(R, \ad)$-invariant.
	Applying Theorem~\ref{thm:extended-perm-YBE-o} and Corollary~\ref{coro:trr}, we get that $(r^T + \tau(r^T)) + (r^S - \tau(r^S))$ (resp. $(r^T + \tau(r^T)) - (r^S - \tau(r^S))$) is a solution of the extended perm-YBE of mass $\frac{\kappa+1}{4}$ in $(A \ltimes_{\frakl^*, \frakl^* - \frakr^*} V^*, \bullet)$ if and only if $r_{+}^{T} + r_{-}^{T}: \mathfrak{A}^* \rightarrow \mathfrak{A}$ is an extended $\mathcal{O}$-operator of weight 0 with extension $r_{+}^{S} - r_{-}^{S}$ of mass $(\kappa, 0)$ on $(\mathfrak{A}, \bullet)$ associated $(\mathfrak{A}^*, L_{\bullet}^*, L_{\bullet}^* - R_{\bullet}^*)$.
	By Theorem~\ref{thm:eo2smp}, the latter condition is equivalent to $T: V \rightarrow A$ being an extended $\mathcal{O}$-operator of weight 0 with extension $S$ of mass $(\kappa, 0)$ on $(A, \cdot)$ associated to $(V, \frakl, \frakr)$.
	Thus, we obtain the desired equivalence.
	Particular cases follow by taking $\kappa = -1$ or $S = 0$.
\end{proof}

\begin{corollary}
	Let $(A, \cdot)$ be a perm algebra and  $(\mathfrak{A} = A \ltimes_{L_{\cdot}^*, L_{\cdot}^* - R_{\cdot}^*} A^*, \bullet)$ be the semi-direct product of $(A, \cdot)$ and $(A^*, L_{\cdot}^*, L_{\cdot}^* - R_{\cdot}^*)$.
	Let $T: A \rightarrow A$ be a linear map.
	Then
	\begin{enumerate}
		\item\label{it:c1}
		      $T$ is an extended $\mathcal{O}$-operator of weight 0 with extension $\id$ of mass $(-1,0)$ on $(A, \cdot)$ associated to $(V, \frakl, \frakr)$ $(A, L_{\cdot}, R_{\cdot})$ if and only if $r^T + \tau(r^T) + (r^\id - \tau(r^\id))$ (resp. $r^T + \tau(r^T) - (r^\id - \tau(r^\id))$) is a solution of the perm-YBE in $(A \ltimes_{L_{\cdot}^*, L_{\cdot}^* - R_{\cdot}^*} A^*, \bullet)$.

		\item\label{it:c2}
		      $T$ is a Rota-Baxter operator of nonzero weight $\lambda$ if and only if $\frac{2}{\lambda}(r^T + \tau(r^T)) + 2r^\id$ (resp. $\frac{2}{\lambda}(r^T + \tau(r^T)) + 2\tau(r^\id)$)  is a solution of the perm-YBE in $(A \ltimes_{L_{\cdot}^*, L_{\cdot}^* - R_{\cdot}^*} A^*, \bullet)$.
	\end{enumerate}
\end{corollary}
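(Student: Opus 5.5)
Both parts come from Corollary~\ref{cor:tsc}, applied to the bimodule $(V,\frakl,\frakr)=(A,L_\cdot,R_\cdot)$ with $S=\id$. The semi-direct product is then $\mathfrak{A}=A\ltimes_{L_\cdot^*,L_\cdot^*-R_\cdot^*}A^*$. First we note that $\id$ is a balanced $A$-bimodule homomorphism from $(A,L_\cdot,R_\cdot)$ to itself. It is trivially a bimodule homomorphism, and it is balanced because $L(a)b=a\cdot b=R(b)a$. This is also recorded in the example after the definition of extended $\mathcal{O}$-operators. So the hypotheses of Corollary~\ref{cor:tsc} hold, and item~(\ref{it:c1}) is exactly its first special case with $\kappa=-1$.

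For item~(\ref{it:c2}), we reduce the Rota-Baxter condition to an extended $\mathcal{O}$-operator of mass $(-1,0)$. Let $\lambda\neq 0$ and set $T'=\frac{2}{\lambda}T+\id$. We expand the expression $T'(a)\cdot T'(b)-T'(T'(a)\cdot b+a\cdot T'(b))$. Writing $\mu=2/\lambda$, it equals
\begin{equation*}
\mu^2\big(T(a)\cdot T(b)-T(T(a)\cdot b+a\cdot T(b))\big)+\mu\big(T(a)\cdot b+a\cdot T(b)\big)+a\cdot b-2\mu T(a\cdot b)-\mu\big(T(a)\cdot b+a\cdot T(b)\big)-2a\cdot b .
\end{equation*}
The cross terms cancel, leaving
\begin{equation*}
\mu^2\big(T(a)\cdot T(b)-T(T(a)\cdot b+a\cdot T(b)+\lambda a\cdot b)\big)-a\cdot b .
\end{equation*}
Hence $T$ is a Rota-Baxter operator of weight $\lambda$ if and only if $T'(a)\cdot T'(b)-T'(T'(a)\cdot b+a\cdot T'(b))=-\id(a)\cdot\id(b)$. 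That is exactly the statement that $T'$ is an extended $\mathcal{O}$-operator of weight $0$ with extension $\id$ of mass $(-1,0)$. Alternatively, this is the corollary following Corollary~\ref{coro:eoqadj} with $k=\ldots$, suitably rescaled; the direct expansion is cleaner.

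Next we apply item~(\ref{it:c1}) to $T'$. The tensors it produces are $r^{T'}+\tau(r^{T'})\pm(r^{\id}-\tau(r^{\id}))$. Since $r^{T'}=\frac{2}{\lambda}r^T+r^{\id}$ by linearity of $T\mapsto r^T$, we get $r^{T'}+\tau(r^{T'})=\frac{2}{\lambda}(r^T+\tau(r^T))+r^{\id}+\tau(r^{\id})$. Adding $r^{\id}-\tau(r^{\id})$ gives $\frac{2}{\lambda}(r^T+\tau(r^T))+2r^{\id}$. Subtracting it gives $\frac{2}{\lambda}(r^T+\tau(r^T))+2\tau(r^{\id})$. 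These are precisely the two tensors in the statement.

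The main obstacle is the bookkeeping of the scaling. We must match the weight-$\lambda$ Rota-Baxter identity with mass exactly $-1$ (not another constant), so the normalization $\frac{2}{\lambda}$ has to be checked carefully in the expansion above. We also need the identification $r^{T}$ to be linear in $T$, so that $r^{T'}$ splits as claimed. Everything else is a direct citation of Corollary~\ref{cor:tsc}.
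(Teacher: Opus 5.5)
Your proposal is correct and follows the paper's proof. Item (1) is Corollary~\ref{cor:tsc} with $S=\id$, and item (2) is reduced to item (1) through $T'=\frac{2}{\lambda}T+\id$. The difference is that you also carry out the expansion showing this $T'$ has mass exactly $(-1,0)$, and the tensor bookkeeping $r^{T'}=\frac{2}{\lambda}r^T+r^{\id}$; the paper only asserts both, and both of your checks are correct.
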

\begin{proof}
	(\ref{it:c1}). 
	The conclusion follows from Corollary~\ref{cor:tsc} by taking $S = \id$.
	
	(\ref{it:c2}).
	Note that $T: A \to A$ is a Rota-Baxter operator of weight $\lambda \neq 0$ if and only if $\frac{2}{\lambda}T + \id$ is an extended $\mathcal{O}$-operator of weight $0$ with extension $\id$ of mass $(-1, 0)$ on $(A, \cdot)$ associated to $(A, L_{\cdot}, R_{\cdot})$.
	Therefore, (\ref{it:c2}) follows from (\ref{it:c1}).
\end{proof}

\begin{example}
	Let $(A = \mathbf{k}, \cdot)$ be the 1-dimensional perm algebra with the usual scalar multiplication.
	Let $(\mathfrak{A} = A \ltimes_{L_{\cdot}^*, L_{\cdot}^* - R_{\cdot}^*} A^*, \bullet)$ be the semi-direct product of $(A, \cdot)$ and $(A^*, L_{\cdot}^*, L_{\cdot}^* - R_{\cdot}^*)$, that is,
	\begin{equation*}
		(a+\xi^*) \bullet (b + \eta^*) = ab + a\eta^*, \;\;
		\forall a, b \in A, \; \xi^*, \eta^* \in V^*. 
	\end{equation*}
	Take $T: A \rightarrow A$ defined by $T(a)= \gamma a$ for all $a \in A$, where $\gamma \in \mathbf{k}$ is fixed.
	Then $T$ is an extended $\mathcal{O}$-operator of weight $0$ with $S=\id$ of mass $(-\gamma^2, 0)$ associated to $(A, L_\cdot, R_\cdot)$.
	Let $e$ be the basis of $A$ and $e^*$ be the dual basis of $A^*$.
	Then Corollary~\ref{cor:tsc} shows that both
	\begin{equation*}
		(\gamma + 1)e\otimes e^* + (\gamma - 1)e^* \otimes e  \quad\text{and}\quad (\gamma - 1)e\otimes e^* + (\gamma + 1)e^* \otimes e
	\end{equation*}
	are solutions of the extended perm-YBE of mass $\frac{1-\gamma^2}{4}$ in $(\mathfrak{A} = A \ltimes_{L_{\cdot}^*, L_{\cdot}^* - R_{\cdot}^*} A^*, \bullet)$.
	In particular, taking $\gamma = \pm 1$, we have that
	\begin{equation*}
		\pm 2 e \otimes e^*, \pm 2 e^* \otimes e 
	\end{equation*}
	are solutions of the perm-YBE in $(\mathfrak{A} = A \ltimes_{L_{\cdot}^*, L_{\cdot}^* - R_{\cdot}^*} A^*, \bullet)$. 
\end{example}

%%%%%%%%%%%%%%%%%%%%%%%%%%%%%%%%%%%%%%%%%%%%%%%%%%%%%%%%%%%%%%%%%%%%%%%%%%%%%%%%
%%%%%%%%%%%%%%%%%%%%%%%%%%%%%%%%%%%%%%%%%%%%%%%%%%%%%%%%%%%%%%%%%%%%%%%%%%%%%%%%
%%%%%%%%%%%%%%%%%%%%%%%%%%%%%%%%%%%%%%%%%%%%%%%%%%%%%%%%%%%%%%%%%%%%%%%%%%%%%%%%

\bigskip

\noindent {\bf Data Availability.}
Data sharing is not applicable to this article as no new data were created or analyzed during this study.

\noindent {\bf Declarations.}
The authors have no competing interests to declare.

%%%%%%%%%%%%%%%%%%%%%%%%%%%%%%%%%%%%%%%%%%%%%%%%%%%%%%%%%%%%%%%%%%%%%%%%%%%%%%%%

%\bibliographystyle{camsplain} % latex makebst
%\bibliography{ref.bib}

\end{document}